\newcommand{\thmcapfont}{\bfseries\upshape}
\newcommand{\thmlabelfont}{\itshape}
\spnewtheorem{assumption}[theorem]{Assumption}{\bfseries}{}
\spnewtheorem*{thm-non}{Theorem }{\thmcapfont}{\thmlabelfont}
\spnewtheorem*{prop-non}{Proposition}{\thmcapfont}{\thmlabelfont}
\spnewtheorem*{classicalTorelli}{Classical Torelli Theorem for K3 surfaces}{\thmcapfont}{\thmlabelfont}
\spnewtheorem*{derivedTorelli}{Derived Torelli Theorem for K3 surfaces }{\thmcapfont}{\thmlabelfont}
\spnewtheorem*{WeakPolarisedTorelli}{Weak Torelli Theorem for polarised K3 surfaces}{\thmcapfont}{\thmlabelfont}
\spnewtheorem*{StrongPolarisedTorelli}{Strong Torelli Theorem for polarised K3 surfaces}{\thmcapfont}{\thmlabelfont}
\spnewtheorem*{SemiPolarisedTorelli}{Torelli Theorem for semi-polarised K3 surfaces}{\thmcapfont}{\thmlabelfont}
\spnewtheorem*{definition-non}{Definition}{\bfseries}{}
\spnewtheorem*{conj-non}{Conjecture }{\bfseries}{}
\spnewtheorem*{remarks-non}{Remarks}{\bfseries}{}
\newcommand{\bib}[5]{{\bibitem{#1}#2, {\emph{#3},} #4#5.}}
\newcommand{\sqmat}[4]{{\big(\genfrac{.}{.}{0pt}{1}{#1}{#3} \,
                        \genfrac{.}{.}{0pt}{1}{#2}{#4}\big) }}
\newcommand{\orth}{^\perp}
\newcommand{\inv}{^{-1}}
\newcommand{\Coh}{\mathrm{Coh}}
\newcommand{\Db}{\mathrm{Db}}
\newcommand{\Hom}{\mathrm{Hom}}
\newcommand{\image}{\mathrm{im}}
\newcommand{\Aut}{\mathrm{Aut}}
\newcommand{\IZ}{\mathbb{Z}}
\newcommand{\IQ}{\mathbb{Q}}
\newcommand{\IR}{\mathbb{R}}
\newcommand{\IC}{\mathbb{C}}
\newcommand{\IN}{\mathbb{N}}
\newcommand{\IP}{\mathbb{P}}
\newcommand{\kg}{\mathcal{G}}
\newcommand{\kf}{\mathcal{F}}
\newcommand{\ko}{\mathcal{O}}
\newcommand{\kc}{\mathcal{C}}
\newcommand{\ke}{\mathcal{E}}
\newcommand{\kp}{\mathcal{P}}
\newcommand{\kq}{\mathcal{Q}}
\newcommand{\LL}{\mathsf{L}}
\newcommand{\RR}{\mathsf{R}}
\newcommand{\FM}{\mathsf{FM}}
\DeclareMathOperator{\OO}{O}
\newcommand{\id}{\mathrm{id}}
\newcommand{\maps}[1]{\stackrel{#1}{\to}}
\newcommand{\antimaps}[1]{\stackrel{#1}{\leftarrow}}
\newcommand{\embed}{\hookrightarrow}
\newcommand{\isom}{ \text{{\hspace{0.48em}\raisebox{0.8ex}{${\scriptscriptstyle\sim}$}}}
                    \hspace{-0.55em}{\rightarrow}\hspace{0.3em}} 
\newcommand{\surject}{\rightarrow\hspace{-1.8ex}\rightarrow}
\newcommand{\bijection}{ \stackrel{1:1}{=} }
\renewcommand{\div}{\text{div}}
\newcommand{\sign}{\text{sgn}}
\newcommand{\disc}{\text{disc}}
\newcommand{\rk}{\text{rk}}
\newcommand{\eq}{\text{eq}}
\newcommand{\hodge}{{\mathsf{H}}}
\newcommand{\kd}{{\mathrm{K3}}}
\newcommand{\zd}{{2d}}
\newcommand{\pp}[1]{(\cdot,\cdot)} 
\newcommand{\Kthree}{{\mathop{\mathrm {K3}}\nolimits}}
\newcommand{\mapH}{\mathsf{H}}
\newcommand{\mapL}{\mathsf{L}}
\begin{document}

\title*{Fourier-Mukai partners and polarised $\Kthree$ surfaces}
\author{K. Hulek \and D. Ploog}
\institute{K. Hulek \at Institut f\"ur Algebraische Geometrie, Welfengarten 1, 30167 Hannover
           \email{hulek@math.uni-hannover.de}
      \and D. Ploog \at Institut f\"ur Algebraische Geometrie, Welfengarten 1, 30167 Hannover
           \email{ploog@math.uni-hannover.de}}
\date{}

\maketitle

\abstract{The purpose of this note is twofold. We first review the theory of 
Fourier-Mukai partners together with the relevant part of Nikulin's theory of
lattice embeddings via discriminants. Then we consider Fourier-Mukai partners
of $\Kthree$ surfaces in the presence of polarisations, in which case we prove
a counting formula for the number of partners.
%
%
\keywords{ MSC 2000: 14J28 primary; 11E12, 18E30 secondary}
}

\bigskip 
\setcounter{minitocdepth}{2}
\dominitoc

\noindent
The theory of FM partners has played a crucial role in algebraic geometry and its connections to string
theory in the last 25 years. Here we shall concentrate on a particularly interesting aspect 
of this, namely the theory of FM partners of $\Kthree$ surfaces.
We shall survey some of the most significant results in this direction. Another aspect, and this
has been discussed much less in the literature, is the question of Fourier-Mukai partners in the presence 
of polarisations. We shall also investigate this in some detail, and it is here that the  
paper contains some new results.

To begin with we review in Section ~\ref{sec:review} the use of derived
categories in algebraic geometry focusing on Fourier-Mukai partners.
In Sections~\ref{sec:lattices} and \ref{sec:overlattices} we then give a
self-contained introduction to lattices and lattice embeddings with emphasis
on indefinite, even lattices. This contains a careful presentation of Nikulin's
theory as well as some enhancements which will then become important for our counting formula. 
From Section~\ref{sec:K3} onwards we will fully concentrate
on $\Kthree$ surfaces. After recalling the classical as well as Orlov's derived Torelli theorem 
for $\Kthree$ surfaces we describe the counting formula for the number of 
$\Kthree$ surfaces given by Hosono, Lian, Oguiso, Yau \cite{HLOY2002}. 
In Section \ref{sec:polarisedK3surfaces} we discuss polarised $\Kthree$ surfaces and their moduli.
The relationship between polarised $\Kthree$ surfaces and FM partners was 
discussed by Stellari in
\cite{stellari} and \cite{stellari-group}. Our main result in this direction is a counting formula
given in Section \ref{sec:countingformula}
in the spirit of \cite{HLOY2002}.

In a number of examples we will discuss the various phenomena which
occur when considering Fourier-Mukai partners in the presence of polarisations.

\bigskip\noindent
\textbf{Conventions:} We will denote bijections of sets as $A\bijection B$. Also, all group
actions will be left actions. In particular, we will denote the sets of orbits by $G\backslash A$
whenever $G$ acts on $A$. However, factor groups are written $G/H$.

If we have group actions by $G$ and $G'$ on a set $A$ which are compatible (i.e.\ they commute),
then we consider this as a $G\times G'$-action (and not as a left-right bi-action). In particular,
the total orbit set will be written as $G\times G'\backslash A$ (and not $G\backslash A/G'$).

\begin{acknowledgement}
\ 
We are grateful to M.~Sch\"utt and F.~Schulz for discussions concerning lattice theory.
The first author would like to thank the organizers of the Fields Institute Workshop on
Arithmetic and Geometry of $\Kthree$ surfaces and Calabi-Yau threefolds held in August 2011
for a very interesting and stimulating meeting. The second author has been supported by 
DFG grant Hu 337/6-1 and by the DFG priority program 1388 `representation theory'.
\end{acknowledgement}


\section{Review Fourier-Mukai partners of $\Kthree$ surfaces}\label{sec:review}

\noindent
For more than a century algebraic geometers have looked at the classifiation of varieties up to birational
equivalence. This is a weaker notion than biregular isomorphism which, however, captures
a number of crucial and interesting properties.

About two decades ago, a different weakening of biregularity has emerged in algebraic
geometry: derived equivalence. Roughly speaking, its popularity stems from two reasons:
on the one hand, the seemingly ever increasing power of homological methods in all
areas of ma\-the\-ma\-tics, and on the other hand the intriguing link derived categories
provide to other mathematical disciplines such as symplectic geometry and representation
theory as well as to theoretical physics.

\subsection*{History: derived categories in algebraic geometry}

Derived categories of abelian categories were introduced in the 1967 thesis of
Grothendieck's student Verdier \cite{verdier}.
The goal was to set up the necessary
homological tools for defining duality in greatest generality --- which meant
getting the right adjoint of the push-forward functor $f_*$. This adjoint cannot
exist in the abelian category of coherent sheaves; if it did, $f_*$ would be exact.
Verdier's insight was to embed the abelian category into a bigger category with
desirable properties, the derived category of complexes of coherent sheaves. The reader is
referred to \cite{hartshorne-duality} for an account of this theory.

In this review, we will assume that the reader is familiar with the basic theory
of derived categories \cite{gelfand-manin}, \cite{weibel}. An exposition of the theory of
derived categories in algebraic geometry can be found in
two text books, namely  by Huybrechts \cite{huybrechts} and
by Bartocci, Bruzzo, Hern\`andez-Ruip\'erez \cite{BBHR}. We will denote by $D^b(X)$
the bounded derived category of coherent sheaves. This category is particularly well
behaved if $X$ is a smooth, projective variety. Later on we will consider $\Kthree$ surfaces,
but in this section, we review some general results.

We recall that two varieties $X$ and $Y$ are said to be \emph{derived equivalent}
(sometimes shortened to \emph{D-equivalent}) if there is an exact equivalence of
categories $D^b(X)\cong D^b(Y)$.

It should be mentioned right away that the use of the \emph{derived} categories is
crucial: a variety is uniquely determined by the abelian category of coherent sheaves,
due to a theorem of Gabriel \cite{gabriel}. Thus, the analogous definition using
abelian categories does not give rise to a new equivalence relation among varieties.

After their introduction, derived categories stayed in a niche, mainly considered as
a homological bookkeeping tool. Its uses were to combine the classical derived
functors into a single derived functor, or to put the Grothendieck spectral sequence
into a more conceptual framework.
The geometric use of derived categories started with the following groundbreaking result:

\begin{thm-non}[Mukai, 1981 \cite{mukai-abelian}]
Let $A$ be an abelian variety with dual abelian variety $\hat A$. Then $A$ and
$\hat A$ are derived equivalent.
\end{thm-non}

Since an abelian variety and its dual are in general not isomorphic (unless they are
principally polarised) and otherwise
never birationally equivalent, this indicates a new phenomenon. For the proof,
Mukai employs the Poincar\'e bundle $\kp$ on $A\times\hat A$ and investigates the
functor $D^b(A)\to D^b(\hat A)$ mapping $E\mapsto\RR\hat\pi_*(\kp\otimes\pi^*E)$
where $\hat\pi$ and $\pi$ denote the projections from $A\times\hat A$ to $\hat A$
and $A$ respectively.

Mukai's approach was not pursued for a while. Instead, derived categories were
used in different ways for geometric purposes: Beilinson, Bernstein, Deligne \cite{BBD}
introduced perverse sheaves as certain objects in the derived category of constructible
sheaves of a variety in order to study topological questions. The school around
Rudakov introduced exceptional collections (of objects in the derived category), which
under certain circumstances leads to an equivalence of $D^b(X)$ with the derived
category of a finite-dimensional algebra \cite{rudakov}. It should be mentioned that
around the same time, Happel introduced the use of triangulated categories in
representation theory \cite{happel}.

\subsection*{Derived categories as invariants of varieties}

Bondal and Orlov started considering $D^b(X)$ as an \emph{invariant} of $X$ with the
following highly influential result:

\begin{thm-non}[Bondal, Orlov, 1997 \cite{bondal-orlov-reconstruction}]
Let $X$ and $Y$ be two smooth, projective varieties with $D^b(X)\cong D^b(Y)$. If $X$
has ample canonical or anti-canonical bundle, then $X\cong Y$.
\end{thm-non}

In other words, at the extreme ends of the curvature spectrum, the derived category
determines the variety. Note the contrast with Mukai's result, which provides examples
of non-isomorphic, derived equivalent varieties with zero curvature (trivial canonical
bundle). This begs the natural question: which (types of) varieties can possibly be
derived equivalent? The philosophy hinted at by the theorems of Mukai, Bondal and Orlov
is not misleading.

\begin{prop-non}
\ Let $X$ and $Y$ be two smooth, projective, derived equivalent varieties.
Then the following hold true:
\begin{enumerate*}
\item[1.\ ] $X$ and $Y$ have the same dimension.
\item[2.\ ] The singular cohomology groups $H^*(X,\IQ)$ and $H^*(Y,\IQ)$ are isomorphic as
      ungraded vector spaces; the same is true for Hoch\-schild cohomology.
\item[3.\ ] If the canonical bundle of $X$ has finite order, then so does the canonical bundle
      of $Y$ and the orders coincide; in particular, if one canonical bundle is trivial,
      then so is the other.
\item[4.\ ] If the canonical (or anti-canonical) bundle of $X$ is ample (or nef), the same is true for $Y$.
\end{enumerate*}
\end{prop-non}

The proposition is the result of the work of many people, see \cite[\S4--6]{huybrechts}.
Stating it here is ahistorical because some of the statements rely on the notion of
Fourier-Mukai transform which we turn to in the next section. It should be said
that our historical sketch is very much incomplete: developments like spaces of stability
conditions \cite{bridgeland-stability} or singularity categories \cite{buchweitz,orlov-sing}
are important but will not play a role here.

\subsection*{Fourier-Mukai partners}

Functors $D^b(X)\to D^b(Y)$ defined by a `kernel', i.e.\ a sheaf (or more
generally, an object) on the product $X\times Y$ were taken up again in the
study of moduli spaces: if a moduli space $M$ of sheaves of a certain
type on $X$ happens to possess a (quasi)universal family $\ke$, then this
family gives rise to a functor $\Coh(M)\to\Coh(X)$, mapping the skyscraper
sheaf in a point $[E]\in M$ to the sheaf it represents, namely $E$.
It was soon realised that its derived functor is a better object of study.
Sometimes, for example, it can be used to show birationality of moduli spaces.

In the following definition, we denote the canonical projections of the product
$X\times Y$ to its factors by $p_X$ and $p_Y$ respectively.

\begin{definition-non}
\ Let $X$ and $Y$ be two smooth, projective varieties and let $K\in D^b(X\times Y)$. The
\emph{Fourier-Mukai functor} with \emph{kernel} $K$ is the composition
\[ \xymatrix@C=2.5em{
    \FM_K\colon D^b(X) \ar[r]^-{p_X^*} & D^b(X\times Y) \ar[r]^-{K\stackrel{\LL}{\otimes}}
                                      & D^b(X\times Y) \ar[r]^-{\RR p_{Y*}} & D^b(Y) } \]
of pullback, derived tensor product with $K$ and derived push-forward.
If $\FM_K$ is an equivalence, then it is called a \emph{Fourier-Mukai transform}.

$X$ and $Y$ are said to be \emph{Fourier-Mukai partners} if a Fou\-rier-\-Mu\-kai
transform exists between their derived categories. The set of all Fourier-Mukai partners
of $X$ up to isomorphisms is denoted by $\FM(X)$.
\end{definition-non}

\begin{remarks-non} \ This important notion warrants a number of comments.

1. Fourier-Mukai functors should be viewed as classical correspondences, i.e.\
maps between cohomology or Chow groups on the level of derived categories. In
particular, many formal properties of correspondences as in \cite[\S14]{fulton} carry
over verbatim: the composition of Fourier-Mukai functors is again such, with the
natural `convoluted' kernel; the (structure sheaf of the) diagonal gives the
identity etc. In fact, a Fourier-Mukai transform induces correspondences on the
Chow and cohomological levels, using the Chern character of the kernel.

2. Neither notation nor terminology is uniform. Some sources mean `Fourier-Mukai
transform' to be an equivalence whose kernel is a sheaf, for example. Notationally,
often used is $\Phi^{X\to Y}_K$ which is inspired by Mukai's original article
\cite{mukai}. This notation, however, has the drawback of being lengthy without
giving additional information in the important case $X=Y$.
\end{remarks-non}

Fourier-Mukai transforms play a very important and prominent role in the theory due to the
following basic and deep result:

\begin{thm-non}[Orlov, 1996 \cite{orlov-k3}]
Given an equivalence $\Phi\colon D^b(X) \isom D^b(Y)$ (as $\IC$-linear, triangulated categories)
for two smooth, projective varieties $X$ and $Y$, then there exists an object $K\in D^b(X\times Y)$
with a functor isomorphism $\Phi\cong\FM_K$. The kernel $K$ is unique up to isomorphism.
\end{thm-non}

By this result, the notions `derived equivalent' and `Fourier-Mukai partners' are
synonymous.

The situation is very simple in dimension 1: two smooth, projective curves are derived
equivalent if and only if they are isomorphic. The situation is a lot more interesting
in dimension 2: apart from the abelian surfaces already covered by Mukai's result,
$\Kthree$ and certain elliptic surfaces can have non-isomorphic FM partners. For $\Kthree$ surfaces,
the statement is as follows (see Section~\ref{sec:K3} for details):

\begin{thm-non}[Orlov, 1996 \cite{orlov-k3}]
For two projective $\Kthree$ surfaces $X$ and $Y$, the following conditions are equivalent:
\begin{enumerate*}
\item[1.\ ] $X$ and $Y$ are derived equivalent.
\item[2.\ ] The transcendental lattices $T_X$ and $T_Y$ are Hodge-isometric.
\item[3.\ ] There exist an ample divisor $H$ on $X$, integers $r\in\IN$, $s\in\IZ$ and a
      class $c\in H^2(X,\IZ)$ such that the moduli space of $H$-semistable sheaves
      on $X$ of rank $r$, first Chern class $c$ and second Chern class $s$ is nonempty,
      fine and isomorphic to $Y$.
\end{enumerate*}
\end{thm-non}

In general, it is a conjecture that the number of FM partners is always finite. For
surfaces, this has been proven by Bridgeland and Maciocia \cite{bridgeland-maciocia}.
The above theorem implies finiteness for abelian varieties, using that an abelian
variety has only a finite number of abelian subvarieties up to isogeny
\cite{golyshev-lunts-orlov}.

\begin{thm-non}[Orlov, Polishchuk 1996, \cite{orlov-abel}, \cite{polishchuk}]
Two abelian varieties $A$ and $B$ are derived equivalent if and only if
$A\times\hat A$ and $B\times\hat B$ are symplectically isomorphic, i.e.\
there is an isomorphism
 $f=\sqmat{\alpha}{\beta}{\gamma}{\delta}\colon A\times\hat A \isom B\times\hat B$
such that $f^{-1}=\sqmat{\hat\delta}{-\hat\beta}{-\hat\gamma}{\hat\alpha}$.
\end{thm-non}

The natural question about the number of FM partners has been studied in greatest
depth for $\Kthree$ surfaces. The first result was shown by Oguiso \cite{oguiso-primes}:
a $\Kthree$ surface with a single primitive ample divisor of degree $d$ has exactly $2^{p(d)-1}$
such partners, where $p(d)$ is the number of prime divisors of $d$. In \cite{HLOY2002},
a formula using lattice counting for general projective $\Kthree$ surfaces was given. In
Section~\ref{sec:K3}, we will reprove this result and give a formula for polarised $\Kthree$
surfaces. We want to mention that FM partners of K3 surfaces have been linked to the
so-called K\"ahler moduli space, see Ma \cite{ma} and Hartmann \cite{hartmann}.

\subsection*{Derived and birational equivalence}

We started this review by motivating derived equivalence as a weakening of
isomorphism, like birationality is. This naturally leads to the question
whether there is an actual relationship between the two notions. At first
glance, this is not the case: since birational abelian varieties are already
isomorphic, Mukai's result provides examples of derived equivalent but not
birationally equivalent varieties. And in the other direction, let $Y$ be the
blowing up of a smooth projective variety $X$ of dimension at least two in a
point. Then $X$ and $Y$ are obviously birationally equivalent but never
derived equivalent by a result of Bondal and Orlov \cite{bondal-orlov-semi}.

Nevertheless some relation is expected. More precisely:

\begin{conj-non}[Bondal, Orlov \cite{bondal-orlov-semi}]
If $X$ and $Y$ are smooth, projective, birationally equivalent varieties with trivial
canonical bundles, then $X$ and $Y$ are derived equivalent.
\end{conj-non}

Kawamata suggested a generalisation using the following notion: two smooth,
projective varieties $X$ and $Y$ are called \emph{$K$-equivalent} if there
is a birational correspondence $X \antimaps{p} Z \maps{q} Y$
with $p^*\omega_X \cong q^*\omega_Y$. He conjectures that K-equivalent varieties
are D-equivalent.

The conjecture is known in some cases, for example
the standard flop (Bondal, Orlov \cite{bondal-orlov-semi}),
the Mukai flop (Kawamata \cite{kawamata-dk}, Namikawa \cite{namikawa}),
Calabi-Yau threefolds (Bridgeland \cite{bridgeland-flops}) and
Hilbert schemes of $\Kthree$ surfaces (Ploog \cite{ploog}).


\section{Lattices} \label{sec:lattices}

\noindent
Since the theory of $\Kthree$ surfaces is intricately linked to lattices, we provide
a review of the lattice theory as needed in this note. By a lattice we always mean a
free abelian group $L$ of finite rank equipped with a non-degenerate symmetric bilinear
pairing $\pp{L}\colon L\times L\to\IZ$. The lattice $L$ is called \emph{even} if
$(v,v)\in 2\IZ$ for all $v\in L$. We shall assume all our lattices to be even.

Sometimes, we denote by $L_K$ the $K$-vector space $L\otimes K$, where $K$ is a field
among $\IQ, \IR, \IC$. The pairing extends to a symmetric bilinear form on $L_K$.
The \emph{signature} of $L$ is defined to be that of $L_\IR$.

The lattice $L$ is called \emph{unimodular} if the
canonical homomorphism $d_L\colon L\to L^{\vee}=\Hom(L,\IZ)$ with
$d_L(v)=(v,\cdot)$ is an isomorphism. Note that $d_L$ is always injective,
as we have assumed $\pp{L}$ to be non-degenerate. This implies that for every element
$f\in L^\vee$ there is a natural number $a\in\IN$ such that $af$ is in the image of
$d_L$.
Thus $L^\vee$ can be identified with the subset
 $\{w\in L\otimes\IQ \mid (v,w)\in\IZ ~ \forall v\in L\}$
of $L\otimes\IQ$ with its natural $\IQ$-valued pairing.
%

We shall denote the \emph{hyperbolic plane} by $U$. A \emph{standard basis} of $U$ is
a basis $e,f$ with $e^2=f^2=0$ and $(e,f)=1$. The lattice $E_8$ is the unique
positive definite even unimodular lattice of rank $8$, and we denote by $E_8(-1)$ its
negative definite opposite.
For an integer $n\neq0$ we denote by $\langle n\rangle$ the rank one lattice where both
generators square to $n$. Finally, given a lattice $L$, then $aL$
denotes a direct sum of $a$ copies of the lattice $L$.

Given any subset $S\subseteq L$, the \emph{orthogonal complement} is
$S\orth:=\{v\in L\mid(v,S)=0\}$. A sublattice $S\subseteq L$ is called
\emph{primitive} if the quotient group $L/S$ is torsion free. Note the
following obvious facts: $S\orth\subseteq L$ is always a primitive sublattice;
we have $S\subseteq S^{\perp\perp}$; and $S$ is primitive if and only if
$S=S^{\perp\perp}$. In particular, $S^{\perp\perp}$ is the \emph{primitive hull}
of $S$.

A vector $v\in L$ is called \emph{primitive} if the lattice $\IZ v$ generated by
it is primitive.

The \emph{discriminant group} of a lattice $L$ is the finite abelian group
$D_L=L^{\vee}/L$. Since we have assumed $L$ to be even it carries a natural
quadratic form $q_L$ with values in $\IQ / 2\IZ$. By customary abuse of
notation we will often speak of a quadratic form $q$ (or $q_L$), suppressing
the finite abelian group it lives on.
Finally, for any lattice $L$, we denote by $l(L)$ the minimal number of generators
of $D_L$.

\subsection*{Gram matrices}
We make the above definitions more explicit using the matrix description.
After choosing a basis, a lattice on $\IZ^r$ is given by a symmetric $r\times r$
matrix $G$ (often called Gram matrix), the pairing being $(v,w)=v^tGw$ for
$v,w\in\IZ^r$. To be precise, the $(i,j)$-entry of $G$ is $(e_i,e_j)\in\IZ$
where $(e_1,\dots,e_r)$ is the chosen basis.

Changing the matrix by symmetric column-and-row operations gives an isomorphic
lattice; this corresponds to $G\mapsto SGS^t$ for some $S\in\text{SL}(r,\IZ)$.
Since our pairings are non-degenerate, $G$ has full rank. The lattice is
unimodular if the Gram matrix has determinant $\pm1$. It is even if and only if
the diagonal entries of $G$ are even.

The inclusion of the lattice into its dual is the map $G\colon\IZ^r\embed\IZ^r$,
$v\mapsto v^tG$. Considering a vector $\varphi\in\IZ^r$ as an element of the dual
lattice, there is a natural number $a$ such that $a\varphi$ is in the image of $G$,
i.e.\ $v^tG=a\varphi$ for some integral vector $v$. 
Then $(\varphi,\varphi)=(v,v)/a^2\in\IQ$.

The discriminant group is the finite abelian group with presentation matrix $G$,
i.e.\ $D\cong\IZ^r/\image(G)$. Elementary operations can be used to diagonalise it.
The quadratic form on the discriminant group is computed as above, only now taking
values in $\IQ/2\IZ$.

The \emph{discriminant} of $L$ is defined as the order of the discriminant group.
It is the absolute value of the determinant of the Gram matrix:
 $\disc(L):=\#D_L=|\det(G_L)|$. Classically, discriminants (of quadratic forms)
are defined with a factor of $\pm1$ or $\pm1/4$; see Example~\ref{ex:binaryforms}.

\subsection*{Genera}
Two lattices $L$ and $L'$ of rank $r$ are said to be \emph{in the same genus}
if they fulfill one of the following equivalent conditions:
\begin{enumerate}
\item[(1)\ ] The localisations $L_p$ and $L'_p$ are isomorphic for all primes $p$
      (including $\IR$).
\item[(2)\ ] The signatures of $L$ and $L'$ coincide and the discriminant forms
      are isomorphic: $q_L \cong q_{L'}$.
\item[(3)\ ] The matrices representing $L$ and $L'$ are \emph{rationally equivalent without
      essential denominators}, i.e.\ there is a base change in $\text{GL}(r,\IQ)$
      of determinant $\pm1$, transforming $L$ into $L'$ and whose denominators are
      prime to $2\cdot\disc(L)$.
\end{enumerate}
For details on localisations, see \cite{nikulin}. The equivalence of (1) and
(2) is a deep result of Nikulin (\cite[1.9.4]{nikulin}).
We elaborate on (2): a map $q\colon A\to\IQ/2\IZ$ is called a quadratic form on
the finite abelian group $A$ if $q(na)=n^2q(a)$ for all $n\in\IZ$, $a\in A$ and
if there is a symmetric bilinear form $b\colon A\times A\to\IQ/\IZ$ such that
$q(a_1+a_2)=q(a_1)+q(a_2)+2b(a_1,a_2)$ for all $a_1,a_2\in A$. It is clear that
discriminant forms of even lattices satisfy this definition.
Two pairs $(A,q)$ and $(A',q')$ are defined to be isomorphic if there is a group
isomorphism $\varphi\colon A\isom A'$ with $q(a)=q'(\varphi(a))$ for all $a\in A$.

The history of the equivalence between (1) and (3) is complicated: Using
analytical methods, Siegel \cite{siegel} proved that $L$ and $L'$ are in the
same genus if and only if for every positive integer $d$ there exists a rational
base change $S_d\in\text{GL}(r,\IQ)$ carrying $L$ into $L'$ and such that the
denominators of $S_d$ are prime to $d$ (and he called this property rational
equivalence without denominators).
There are algebraic proofs of that statement, e.g.\ \cite[Theorem~40]{jones} or
\cite[Theorem~50]{watson}. These references also contain (3) above, i.e.\ the
existence of a single $S\in\text{GL}(r,\IQ)$ whose denominators are prime to
$2\cdot\disc(L)$.

For binary forms, all of this is closely related to classical number theory.
In particular, the genus can then also be treated using the ideal class group of
quadratic number fields. See \cite{cox} or \cite{zagier} for this. Furthermore,
there is a strengthening of (3) peculiar to \emph{field discriminants} (see
\cite[\S3.B]{cox}):
\begin{enumerate*}
\item[(4)\ ] Let $L=\sqmat{2a}{b}{b}{2c}$ and $L'=\sqmat{2a'}{b'}{b'}{2c'}$ be two
           binary even, indefinite lattices with $\gcd(2a,b,c)=\gcd(2a',b',c')=1$
           and of the same discriminant $b^2-4ac=:D$ such that either
           $D\equiv 1 \mod 4$, $D$ squarefree, or $D=4k$, $k\not\equiv 1 \mod 4$,
           $k$ squarefree. Then $L$ and $L'$ are in the same genus if and
           only if they are rationally equivalent, i.e.\ there is a base change
           $S\in\text{GL}(2,\IQ)$ taking $L$ to $L'$.
\end{enumerate*}

The genus of $L$ is denoted by $\kg(L)$ and it is a basic, but non-trivial fact that $\kg(L)$
is a finite set.
Given natural numbers $t_+$ and $t_-$ and a finite abelian group $D_q$ together with a
quadratic form $q\colon D_q\to\IQ/2\IZ$ , we also denote by $\kg(t_+,t_-,q)$ the set of
even lattices of this genus.

\begin{example} \label{ex:binaryforms}
We consider binary forms, that is lattices of rank 2. Clearly, a symmetric bilinear
form with Gram matrix $\sqmat{a}{b}{b}{c}$ is even if and only if both diagonal terms
are even.

Note that many classical sources use quadratic forms instead of lattices. We explain
the link for binary forms $f(x,y)=ax^2+bxy+cy^2$ (where $a,b,c\in\IZ$). The associated
bilinear form has Gram matrix $G=\frac{1}{2}\sqmat{2a}{b}{b}{2c}$ --- in particular, it
need not be integral. An example is $f(x,y)=xy$. In fact, the bilinear form, i.e.\ $G$,
is integral if and only if $b$ is even (incidentally, Gau\ss\ always made that assumption).
Note that the quadratic form $2xy$ corresponds to our hyperbolic plane $\sqmat{0}{1}{1}{0}$.
The discriminant of $f$ is classically defined to be $D:=b^2-4ac$ which differs from our
definition (i.e.\ $|\det(G)|=\#D$) by a factor of $\pm4$.

We proceed to give specific examples of lattices as Gram matrices.
Both $A=\sqmat{2}{4}{4}{0}$ and $B=\sqmat{0}{4}{4}{0}$ are indefinite, i.e.\ of
signature $(1,1)$, and have discriminant 16, but the discriminant groups are not
isomorphic: $D_A=\IZ/2\IZ\times\IZ/8\IZ$ and $D_B=\IZ/4\IZ\times\IZ/4\IZ$. Thus $A$
and $B$ are not in the same genus.

Another illuminating example is given by the forms $A$ and $C=\sqmat{-2}{4}{4}{0}$.
We first notice that these forms are not isomorphic: the form $A$ represents $2$, but $C$ does not,
as can be seen by looking at the possible remainders of $-2x^2+8xy$ modulo $8$.
The two forms have the same signature and discriminant groups, but the discriminant forms are different.
To see this we note that $D_A$ is generated by the residue classes of $t_1=e_1/2$ and
$t_2=(2e_1 + e_2)/8$, whereas $D_C$ is generated by the residue classes of $s_1=e_1/2$ and
$s_2=(-2e_1 + e_2)/8$. The quadratic forms $q_A$ and $q_C$ are determined by
$q_A(\overline{t_1})=1/2$, $q_A(\overline{t_2})= 3/8$ and
$q_C(\overline{s_1})=-1/2$, $q_C(\overline{s_2})= -3/8$. The forms cannot be isomorphic, for the
subgroup of $D_A$ of elements of order 2 consists of $\{0,t_1,4t_2,t_1+4t_2\}$ (this is the Klein
four group) and the values of $q_A$ on these elements in $\IQ/2\IZ$ are $0, 1/2, 4^2\cdot 3/8=0, 42/4=1/2$.
Likewise, the values of $q_C$ on the elements of order 2 in $D_C$ are $0$ and $-1/2$.
Hence $(D_A,q_A)$ and $(D_C,q_C)$ cannot be isomorphic.

Zagier's book also contains the connection of genera to number theory and their classification using
ideal class groups \cite[\S8]{zagier}.
An example from this book \cite[\S12]{zagier} gives an instance of lattices
in the same genus which are not isomorphic: the forms $D=\sqmat{2}{1}{1}{12}$ and
$E=\sqmat{4}{1}{1}{6}$ are positive definite of field discriminant $-23$. They are in the
same genus (one is sent to the other by the fractional basechange
$-\frac{1}{2}\sqmat{1}{1}{-3}{1}$) but not equivalent: $D$ represents $2$ as
the square of $(1,0)$ whereas $E$ does not represent 2 as $4x^2+2xy+6y^2=3x^2+(x+y)^2+5y^2\geq4$
if $x\neq0$ or $y\neq0$.
\end{example}

Unimodular, indefinite lattices are unique in their genus, as follows from their well known
classification. A generalisation is given by \cite[Cor.\ 1.13.3]{nikulin}:
\begin{lemma} \textbf{\emph{(Nikulin's criterion)}} An indefinite
lattice $L$ with $\rk(L)\geq2+l(L)$ is unique within its genus.
\end{lemma}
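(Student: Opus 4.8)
The plan is to reduce the statement to a uniqueness property of primitive embeddings into an indefinite even unimodular lattice, for which Eichler-type extension theorems are available.

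First I would apply the genus criterion recalled above. By the equivalence of conditions~(1) and~(2), two even lattices $L$ and $L'$ lying in the same genus share the same signature $(t_+,t_-)$ and carry isometric discriminant forms, $q_L\cong q_{L'}$. Since $L$ is indefinite we have $t_+\geq1$ and $t_-\geq1$. Thus everything reduces to showing that the signature together with the discriminant form pins down the isometry class, under the hypothesis $\rk(L)\geq 2+l(L)$.

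The key construction is an embedding into a unimodular lattice. I would first realise the opposite discriminant form by an even indefinite lattice $T$ with $q_T\cong -q_L$, chosen of small rank (close to $l(L)$, with a hyperbolic plane $U$ adjoined to fix the signature $(s_+,s_-)$ and guarantee indefiniteness). Because $q_T\cong-q_L$, the graph of an isometry $q_L\isom -q_T$ defines an even \emph{unimodular} overlattice $M$ of $L\oplus T$ in which $L$ and $T$ are mutually orthogonal primitive sublattices, i.e.\ $L=T\orth$ and $T=L\orth$; its signature is $(t_++s_+,\,t_-+s_-)$ and it is indefinite by construction. Since $q_{L'}\cong q_L\cong -q_T$ and the signatures match, $L'$ likewise admits a primitive embedding into the same $M$ with orthogonal complement isometric to $T$. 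At this point $M$ contains two primitive sublattices $T_1=L\orth$ and $T_2=(L')\orth$, both isometric to $T$, with orthogonal complements $L$ and $L'$ respectively.

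It then suffices to find an isometry of $M$ carrying $T_1$ onto $T_2$: restricting it to orthogonal complements yields the desired isometry $L=T_1\orth\isom T_2\orth=L'$. This transitivity of $\OO(M)$ on the primitively embedded copies of $T$ --- equivalently, the uniqueness up to $\OO(M)$ of a primitive embedding of $T$ into the indefinite unimodular lattice $M$ --- is the technical heart of the argument, and it is exactly here that the numerical hypothesis is used. Indeed $l(T)=l(L)$, so the orthogonal complement of the embedded $T$, namely $L$, is indefinite of rank $\rk(L)\geq 2+l(L)=2+l(T)$; this is precisely the condition under which an Eichler/Kneser strong-approximation argument forces the primitive embedding to be unique, the gluing ambiguity being absorbed by the surjectivity of $\OO(M)$ onto the automorphisms $\OO(q_T)$ of the discriminant form. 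I expect this lifting-and-transitivity step, rather than the largely formal embedding bookkeeping, to be the main obstacle; the classification of indefinite even unimodular lattices recalled above (uniqueness in their genus) is what makes the construction of $M$ unambiguous.
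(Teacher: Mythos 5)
Your lattice-theoretic bookkeeping is correct: an even lattice $T$ with $q_T\cong-q_L$ exists, the graph of an isometry $q_L\isom-q_T$ glues $L\oplus T$ into an even, indefinite, unimodular overlattice $M$ in which $L$ and $T$ are mutually orthogonal primitive complements, and since $L'$ has the same signature and discriminant form, the same construction together with the classification of indefinite even unimodular lattices exhibits $L'$ as the complement of another primitive copy of $T$ in the same $M$. Note first that the paper itself does not prove the lemma; it quotes it from \cite[Cor.\ 1.13.3]{nikulin}. Measured against the proofs in the literature, your argument has a genuine gap at exactly the step you call the technical heart: transitivity of $\OO(M)$ on primitive embeddings $T\embed M$ is not a reduction of the problem but a restatement of it. Indeed, since $M$ is unimodular, every orthogonal complement of a primitive copy of $T$ in $M$ has signature $\sign(L)$ and discriminant form $-q_T\cong q_L$, hence lies in $\kg(L)$; conversely, every lattice in $\kg(L)$ occurs as such a complement by your own graph construction. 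So the set of isomorphism classes of complements \emph{is} the set of classes in $\kg(L)$, and the asserted transitivity is equivalent to (in fact slightly stronger than) the statement that $\kg(L)$ is a single class --- which is what you set out to prove.

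The circularity is also visible in how the literature is organised: uniqueness of primitive embeddings into unimodular lattices under your numerical hypotheses is Nikulin's Theorem 1.14.4, and Nikulin deduces it \emph{from} the uniqueness-in-genus statement (Theorem 1.14.2, of which Corollary 1.13.3 is a special case) together with the surjectivity of $\OO(K)\to\OO(q_K)$ for the complement $K$ --- not the other way around. The ``gluing ambiguity absorbed by surjectivity'' that you invoke is precisely surjectivity of $\OO(L)\to\OO(q_L)$, again part of the package being proved. The input that actually carries the weight is strong approximation applied directly to $L$ itself: Eichler's theorem that an indefinite lattice of rank at least $3$ is alone in its spinor genus, combined with a spinor-norm computation showing that under $\rk(L)\geq 2+l(L)$ the genus of $L$ contains a single spinor genus. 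If you supply that input, it applies to $L$ directly and the excursion through $M$ becomes unnecessary; without it, your proposal defers the entire content of the lemma to an unproved claim.
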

Recall that $l(L)$ denotes the minimal number of generators of the finite group $D_L$.
Since always $\rk(L)\geq l(L)$, Nikulin's criterion only fails to apply in two cases, namely if
$l(L)=\rk(L)$ or $l(L)=\rk(L)-1$.


\bigskip

For a lattice $L$, we denote its group of isometries by $\OO(L)$. An isometry of
lattices, $f\colon L\isom L'$ gives rise to $f_\IQ\colon L_\IQ\isom L'_\IQ$ and
hence to $D_f\colon D_L\isom D_{L'}$. In particular, there is a natural homomorphism
$\OO(L)\to\OO(D_L)$ which is used to define the \emph{stable isometry group} as
\[ \tilde\OO(L) := \ker(\OO(L)\to\OO(D_L)) .\]

Finally, we state a well known result of Eichler, \cite[\S10]{eichler}:
\begin{lemma} \textbf{\emph{(Eichler's criterion)}}
Suppose that $L$ contains $U\oplus U$ as a direct summand. The
$\OO(L)$-orbit of a primitive vector $v\in L$ is determined by the length
$v^2$ and the element $v/\div(v)\in D(L)$ of the discriminant group.
\end{lemma}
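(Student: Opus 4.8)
The plan is to produce the required isometries explicitly by means of \emph{Eichler transvections}, which will in fact land in the stable isometry group $\tilde\OO(L)$. For an isotropic vector $e\in L$ and a vector $a\in e\orth$ define
\[ E(e,a)(x) = x-(a,x)\,e+(e,x)\,a-\tfrac12(a,a)(e,x)\,e. \]
A direct check shows $E(e,a)$ is an isometry fixing $e$, that $E(e,a)\circ E(e,b)=E(e,a+b)$, and --- since $L$ is even and $e,a\in L$ --- that $E(e,a)(x)\equiv x\pmod L$ for every $x\in L^\vee$. Hence each $E(e,a)$ acts trivially on $D_L$, i.e.\ $E(e,a)\in\tilde\OO(L)$. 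Because $L\supseteq U\oplus U$ carries the four isotropic vectors $e_1,f_1,e_2,f_2$, we obtain a large supply of such transvections, and these will be the only moves used.

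First I record the easy direction. For any $g\in\OO(L)$ the length $v^2$ and the divisor $\div(v)$ are preserved, and for $g\in\tilde\OO(L)$ the class $v/\div(v)\in D_L$ is preserved as well (note that its order in $D_L$ equals $\div(v)$, so the divisor is already encoded in it). Writing $L=U\oplus U\oplus K$ with $K=(U\oplus U)\orth$ and using $D_{U\oplus U}=0$, we have $D_L\cong D_K$; thus $v/\div(v)$ is represented by a vector $\lambda\in K^\vee$.

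The substance is the converse, for which I claim a normal form: every primitive $v$ can be carried by $\tilde\OO(L)$ to
\[ v_{0}=d\,e_1+m\,f_1+\kappa_0, \]
where $d=\div(v)$, $\kappa_0=d\lambda\in K$ (integral since the class $v/\div(v)$ has order $d$), and $m=(v^2-\kappa_0^2)/(2d)$. Here $m\in d\IZ$, because $v^2/d^2$ and $\lambda^2$ agree in $\IQ/2\IZ$, both computing the quadratic form of the common class $v/\div(v)$; this also makes $\div(v_0)=d$, $v_0/\div(v_0)\equiv\lambda$ and $v_0^2=v^2$. Since $v_0$ depends only on the pair $(v^2,\,v/\div(v))$, two primitive vectors sharing these invariants are $\tilde\OO(L)$-equivalent, and a fortiori lie in one $\OO(L)$-orbit; combined with the easy direction this shows the orbit is determined by the pair. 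The reduction to $v_0$ proceeds in three moves: (a) using transvections across the two planes, arrange that the divisor is realised as an honest pairing, say $(f_1,v)=d$; (b) with $(f_1,v)=d$, each $E(f_1,a)$ adds $d\,a$ to $v$ modulo $f_1$ for arbitrary $a\in f_1\orth$, which lets us annihilate the $U_2$-component and reduce the $K$-component to the fixed representative $\kappa_0$; (c) finally adjust the $e_1,f_1$-coefficients --- only ever by multiples of $d$, so that neither the divisor nor the class changes --- to bring the length to its prescribed value $v^2$.

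The main obstacle is steps (a)--(b): realising the $\gcd$ $d$ of all the pairings $(v,\cdot)$ as a single pairing, and then clearing the orthogonal-complement component down to one fixed representative of the discriminant class. This is precisely the point at which the hypothesis $U\oplus U\subseteq L$ is used: one hyperbolic plane serves as the fixed ``home'' for the normalised vector while the second provides the room to shuffle the remaining components without obstruction. The delicate bookkeeping throughout is to keep the divisor equal to $d$ and to exploit evenness of $L$ at each stage, so that $\kappa_0$ stays integral and $m$ comes out divisible by $d$.
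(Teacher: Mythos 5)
Your overall strategy is the standard one (the paper itself offers no proof of this lemma --- it is quoted from Eichler's book --- so the comparison can only be with the classical argument, whose shape you reproduce): Eichler transvections $E(e,a)$, which you correctly verify lie in $\tilde\OO(L)$ because $L$ is even, together with reduction to a normal form $v_0=de_1+mf_1+\kappa_0$ depending only on $(v^2,\,v/\div(v))$. The peripheral bookkeeping is all correct: $\kappa_0=d\lambda$ is integral because the class has order exactly $d$; $d\mid m$ because $v^2/d^2\equiv\lambda^2\bmod 2\IZ$; and step (b) is sound, since once $(f_1,v)=d$ the transvection $E(f_1,a)$ translates the $f_1\orth$-part of $v$ by $da$, while $\div(v)=d$ forces the $U^{(2)}$-component of $v$ to lie in $dU^{(2)}$ and the $K$-component to be congruent to $\kappa_0$ modulo $dK$. (Step (c) is in fact vacuous: transvections preserve $v^2$, so once the $e_1$-coefficient equals $d$, the $U^{(2)}$-component vanishes and the $K$-component equals $\kappa_0$, the $f_1$-coefficient is automatically $m$.)

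The genuine gap is step (a), which you yourself call ``the main obstacle'' and then never prove: no argument is given that $v$ can be moved so that $(f_1,v)=d$. This is the crux of Eichler's criterion; everything else is routine. Note that the moves you actually exhibit cannot do it: $E(f_1,a)$ fixes the value $(f_1,v)$, and $E(e_1,a)$ changes it by $(a,v)+\tfrac12 a^2(e_1,v)$, so one needs a genuine gcd reduction controlling both the correction term $\tfrac12 a^2(e_1,v)$ and the contribution of the $K$-component of $v$. A complete argument runs, for instance, as follows: identify $U\oplus U$ with the lattice of $2\times2$ integer matrices with quadratic form twice the determinant; the transvections $E(e_1,x),E(f_1,x)$ with $x\in\{e_2,f_2\}$ (and symmetrically $E(e_2,x),E(f_2,x)$ with $x\in\{e_1,f_1\}$) have vanishing correction term and realize the elementary integer column (respectively row) operations, while acting trivially on $K$; Smith normal form then brings the hyperbolic part of $v$ to $d_1e_1+d_2f_1$ with $d_1$ the gcd of the four hyperbolic coefficients, the $K$-component $w$ untouched. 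If $d_1\neq d$, choose $k\in K$ with $(k,w)=\div_K(w)$; since now $(e_2,v)=0$, the transvection $E(e_2,k)$ simply subtracts $\div_K(w)\,e_2$ from $v$, and a second Smith reduction produces $e_1$-coefficient $\gcd(d_1,d_2,\div_K(w))=d$, as required. Without this step (or an equivalent one), your proposal asserts at its core exactly the statement it set out to prove.
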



\section{Overlattices} \label{sec:overlattices}

\noindent
In this section, we elaborate on Nikulin's theory of overlattices and primitive embeddings \cite{nikulin}; we also give some examples. Eventually, we generalise slightly to cover a setting needed for the Fourier-Mukai partner counting in the polarised case.

We fix a lattice $M$ with discriminant form $q_M\colon D_M\to\IQ/2\IZ$.

By an \emph{overlattice} of $M$ we mean a lattice embedding $i\colon M\embed L$
 with $M$ and $L$ of the same rank. Note that we have inclusions
\[ \xymatrix@1{
   M \ar[r]^i \ar@/_1pc/[rrr]_{d_M} & L \ar[r]^-{d_L} & L^\vee \ar[r]^-{i^\vee} & M^\vee
} \]
with $d_L\colon L\embed L^\vee$ and $d_M\colon M\embed M^\vee$ the canonical maps. (For now,
we will denote these canonical embeddings just by $d$, and later not denote them at all.)
From this we get a chain of quotients
\[ \xymatrix@1{
   L/iM \ar[r]^-{d} & L^\vee/diM \ar[r]^-{i^\vee} & M^\vee/i^\vee diM = D_M .
} \]
We call the image $H_i\subset D_M$ of $L/iM$ the \emph{classifying subgroup} of the
overlattice. Note that $D_M$ is equipped with a quadratic form, so we can also speak
of the orthogonal complement $H_i^\perp$.
We will consider $L^\vee/diM$ as a subgroup of $D_M$ in the same way via $i^\vee$.

We say that two ebeddings $i\colon M\embed L$ and $i'\colon M\embed L'$ {\em define the same overlattice} 
if there is an isometry $f\colon L\isom L'$ such that $fi=i'$:

\[ \xymatrix{
 M \ar[r]^i \ar@{=}[d] & L \ar[d]^f \\
 M \ar[r]^{i'}         & L'
} \]

This means in particular that within each isomorphism class, we can restrict to looking at
embeddings $i\colon M\embed L$ into a \emph{fixed} lattice $L$.

\begin{lemma}\emph{\cite[Proposition 1.4.1]{nikulin}} \label{lem:nikulinI}
Let $i\colon M\embed L$ be an overlattice. Then the subgroup $H_i$ is isotropic
in $D_M$, i.e.\ $q_M|_{H_i}=0$. Furthermore, $H_i^\perp=L^\vee/diM$ and there is a
natural identification $H_i^\perp/H_i\cong D_L$ with $q_M|_{H_i^\perp/H_i}=q_L$.
\end{lemma}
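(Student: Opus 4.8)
My plan is to carry out everything inside the common rational vector space $V:=M\otimes\IQ$. Since $i\colon M\embed L$ is an embedding of lattices of equal rank, the induced map $i_\IQ\colon M\otimes\IQ\isom L\otimes\IQ$ is an isomorphism, so I identify $L\otimes\IQ$ with $V$ and regard $M$, $L$, $L^\vee$, $M^\vee$ all as subgroups of $V$ carrying its $\IQ$-valued pairing. This yields the chain $M\subseteq L\subseteq L^\vee\subseteq M^\vee$: the middle inclusion because the pairing on $L$ is $\IZ$-valued, the outer one because $M\subseteq L$ dualises to $L^\vee\subseteq M^\vee$. With these identifications the maps $i$, $d_L$, $i^\vee$ of the displayed diagram become honest inclusions, so the classifying subgroup is simply $H_i=L/M\subseteq M^\vee/M=D_M$, and the subgroup $L^\vee/d i M$ is just $L^\vee/M$. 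I would then record the representative formulas for the discriminant form and its associated bilinear form, namely $q_M(\overline w)=(w,w)\bmod 2\IZ$ and $b_M(\overline w,\overline{w'})=(w,w')\bmod\IZ$ for $w,w'\in M^\vee$, and likewise for $L$.

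The first assertion is then immediate: for $x\in L$ the evenness of $L$ gives $(x,x)\in 2\IZ$, whence $q_M(x+M)=0$, so $H_i=L/M$ is isotropic. For the second assertion I compute the orthogonal complement directly. An element $y+M\in D_M$ lies in $H_i\orth$ exactly when $(y,x)\in\IZ$ for every $x\in L$, and this is precisely the defining condition $y\in L^\vee$; hence $H_i\orth=L^\vee/M=L^\vee/d i M$, with no appeal to non-degeneracy or order counts.

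The third assertion combines the first two. The third isomorphism theorem gives the canonical identification $H_i\orth/H_i=(L^\vee/M)/(L/M)\cong L^\vee/L=D_L$. It then remains to check that the restriction of $q_M$ to $H_i\orth$ descends along this quotient and agrees with $q_L$. Descent is guaranteed by the inclusion $H_i\subseteq H_i\orth$: for $y\in L^\vee$ and $x\in L$ one has $q_M((y+x)+M)=(y,y)+2(y,x)+(x,x)\bmod 2\IZ$, where $2(y,x)\in 2\IZ$ (since $y\in L^\vee$, $x\in L$) and $(x,x)\in 2\IZ$ (evenness of $L$), so the value depends only on the class $y+L$ and equals $(y,y)=q_L(y+L)\bmod 2\IZ$. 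This identifies the induced form with $q_L$.

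All the computations are routine; the single point deserving care is the bookkeeping of the first step, that is, verifying that the three maps $i$, $d_L$, $i^\vee$ of the diagram genuinely become the inclusions $M\subseteq L\subseteq L^\vee\subseteq M^\vee$ in $V$, so that $H_i$ and $H_i\orth$ are correctly identified with the claimed quotients of honest sublattices. Once this dictionary is fixed, isotropy, the computation of $H_i\orth$, and the form identity each reduce to reading off the definitions of $q_M$ and $b_M$ on coset representatives.
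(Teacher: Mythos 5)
Your proof is correct and complete. Note that the paper itself gives no proof of this lemma --- it is quoted directly as \cite[Proposition 1.4.1]{nikulin} --- so there is no in-paper argument to compare against; your proposal fills that gap with what is essentially Nikulin's own argument: identify $M$, $L$, $L^\vee$, $M^\vee$ as subgroups of $M\otimes\IQ$ so that $H_i=L/M$ and $L^\vee/diM=L^\vee/M$, then read off isotropy from evenness of $L$, the equality $H_i^\perp=L^\vee/M$ from the definition of the dual lattice, and the identification $H_i^\perp/H_i\cong D_L$ with matching forms from the third isomorphism theorem. The one step you rightly flag as needing care --- that $i$, $d_L$, $i^\vee$ all become honest inclusions under this dictionary (in particular that $i^\vee(w)=(w,i(\cdot))$ is represented by $w$ itself) --- is routine and your treatment of it is adequate.
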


We introduce the following sets of overlattices $L$ of $M$ and quotients $L/M$ respectively,
where we consider $L/M$ as an isotropic subgroup of the discriminant group $D_M$:
\begin{align*}
\ko(M) &:= \{ (L,i) \mid L \text{ lattice}, i\colon M\embed L \text{ overlattice} \}
          &&
\\
\kq(M) &:= \{ H\subset D_M \text{ isotropic} \}.
           &&
\end{align*}
We also use the notation $\ko(M,L)$ to specify that the bigger lattice is isomorphic to $L$. With this notation
we can write $\ko(M)$ as a disjoint union
$$
\ko(M) = \coprod_L \ko(M,L)
$$
where $L$ runs through all isomorphism classes of possible overlattices of $M$.

\begin{remark} \label{rem:isometry-extension}
The set $\kq(M)$ is obviously finite. On the other hand, an overlattice $i\colon M\embed L$
can always be modified by an isometry $f\in\OO(M)$ to yield an overlattice $if\colon M\embed L$.
However, if $f\in\tilde\OO(M)$ is a stable isometry, then it can be extended to an isometry
of $L$ and hence $i$ and $if$ define the same overlattice. This shows that $\ko(M)$ is also finite.
\end{remark}

The following lemma is well known and implicit in \cite{nikulin}.

\begin{lemma}
There is a bijection between $\ko(M)$ and $\kq(M)$.
\end{lemma}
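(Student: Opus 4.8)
The plan is to set up a forward map $\Phi\colon\ko(M)\to\kq(M)$, $(L,i)\mapsto H_i$, and show it is a bijection by producing an explicit inverse. That $\Phi$ lands in $\kq(M)$ is exactly the isotropy statement of Lemma~\ref{lem:nikulinI}. The conceptual heart of the argument is that every overlattice has a canonical representative sitting inside $M^\vee$: given $(L,i)$, the map $j:=i^\vee\circ d_L\colon L\to M^\vee$ is injective (a functional on $L$ vanishing on the finite-index sublattice $iM$ vanishes on $L$), satisfies $j\circ i=d_M$, and is an isometry onto its image once we identify $M\subset L\subset M^\vee$ as lattices inside $M\otimes\IQ$ carrying the restricted $\IQ$-valued form; indeed under these identifications $j$ is simply the tautological inclusion $L\hookrightarrow M^\vee$. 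Thus $(L,i)$ defines the same overlattice as the inclusion of $M$ into the sublattice $j(L)$ with $M\subset j(L)\subset M^\vee$, and moreover $j(L)/M=H_i$ by the very definition of the classifying subgroup. This already shows $\Phi$ is well defined on equivalence classes and reduces everything to sublattices between $M$ and $M^\vee$.

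For the inverse I would send an isotropic subgroup $H\subset D_M$ to the preimage $\Lambda_H$ of $H$ under the quotient $M^\vee\surject M^\vee/M=D_M$, a group with $M\subset\Lambda_H\subset M^\vee$ and $\Lambda_H/M=H$, endowed with the restricted pairing. The crucial point is that isotropy is precisely the condition making $\Lambda_H$ an even lattice. Writing $b_M$ for the polar form of $q_M$, the relation $q_M(a_1+a_2)=q_M(a_1)+q_M(a_2)+2b_M(a_1,a_2)$ shows that $q_M|_H=0$ forces $b_M|_H=0$ in $\IQ/\IZ$; hence $(x,y)\in\IZ$ for $x,y\in\Lambda_H$, giving integrality, while $q_M|_H=0$ gives $(x,x)\in 2\IZ$ directly, giving evenness. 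So $M\embed\Lambda_H$ is a genuine even overlattice of the same rank, and I set $\Psi(H):=(\Lambda_H,\mathrm{incl})$.

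Finally I would check $\Phi$ and $\Psi$ are mutually inverse. The composition $\Phi\circ\Psi$ is the identity because the classifying subgroup of $M\embed\Lambda_H$ is $\Lambda_H/M=H$ by construction. For $\Psi\circ\Phi$, the reduction in the first paragraph identifies $(L,i)$ with the inclusion $M\embed j(L)$, where $j(L)\subset M^\vee$ has $j(L)/M=H_i$; but the sublattice of $M^\vee$ containing $M$ with quotient $H_i$ is unique, namely $\Lambda_{H_i}$, so $j(L)=\Lambda_{H_i}$ and $(L,i)$ and $\Psi(\Phi(L,i))$ coincide in $\ko(M)$. The main obstacle is the bookkeeping in the first paragraph: verifying that $j=i^\vee\circ d_L$ really is an isometric identification of $L$ with a sublattice of $M^\vee$ compatible with the embeddings of $M$, since this single fact simultaneously yields well-definedness of $\Phi$, injectivity, and surjectivity. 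Once that canonical picture is in place, the equivalence of overlattices with isotropic subgroups is essentially the standard correspondence between intermediate lattices $M\subset L\subset M^\vee$ and subgroups of $D_M$, refined by the even-lattice/isotropy dictionary.
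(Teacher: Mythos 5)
Your proof is correct and follows essentially the same route as the paper: the forward map is the classifying subgroup $H_i$, the inverse sends $H$ to the preimage $\pi^{-1}(H)\subset M^\vee$ (your $\Lambda_H$ is the paper's $L_H$), integrality and evenness of the restricted form are deduced from isotropy exactly as in the paper, and the identification $\Psi\Phi(L,i)\simeq(L,i)$ is made via the same map $v\mapsto\langle v,i(\cdot)\rangle_M$ into $M^\vee$. The only difference is that you spell out the bookkeeping (injectivity of $j$, the polar-form argument, uniqueness of intermediate lattices) that the paper leaves implicit.
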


\begin{proof} \
We use the maps
\begin{align*}
\mapH &\colon \ko(M) \to \kq(M), \quad (L,i) \mapsto H_i ,    \\
\mapL &\colon \kq(M) \to \ko(M), \quad H     \mapsto (L_H,i_H)
\end{align*}
where, for $H\in\kq(M)$, we define
 $ L_H := \{ \varphi \in M^\vee\mid [\varphi] \in H\} = \pi\inv(H)$
where $\pi\colon M^\vee\to D_M$ is the canonical projection.
The canonical embedding $d\colon M\embed M^\vee$ factors through $L_H$,
giving an injective map $i_H\colon M\to L_H$.
All of this can be summarised in a commutative diagram of short exact sequences
\[ \xymatrix{
0 \ar[r] & M \ar[r]^{i_H} \ar@{=}[d] & L_H    \ar[r] \ar@{^(->}@<-.7ex>[d] & H   \ar[r] \ar@{^(->}[d] & 0 \\
0 \ar[r] & M \ar[r]                 & M^\vee \ar[r]^\pi            & D_M \ar[r] & 0 .
} \]

The abelian group $L_H$ inherits a $\IQ$-valued form from $M^\vee$.
This form is actually $\IZ$-valued because of $q_M|_H=0$. Furthermore,
the bilinear form on $L_H$ is even since the quadratic form on $D_M$ is
$\IQ/2\IZ$-valued. Hence, $L_H$ is a lattice and $i_H$ is obviously a
lattice embedding.

It is immediate that $\mapH\mapL=\id_{\kq(M)}$. On the other hand, the overlattices
$\mapL\mapH(L,i)$ and $(L,i)$ are identified by the embedding $L\to M^\vee$,
$v\mapsto \langle v,i(\cdot)\rangle_M$ which has precisely $\mapL\mapH(L,i)$
as image.
\qed
\end{proof}

\noindent
We want to refine this correspondence slightly. For this we fix a quadratic form $(D,q)$ which occurs as the discriminant
of some lattice (and forget $L$) and set
\begin{align*}
\ko(M,q) &:= \{ (L,i) \in \ko(M) \mid [L]\in\kg(\sign(M),q) \}, \\
\kq(M,q) &:= \{ H     \in \kq(M) \mid q_M|_{H^\perp/H}\cong q \}.
\end{align*}
The condition $q_{M}|_{H^\perp/H}\cong q$ here includes $H^\perp/H\cong D$.

\begin{lemma}
There is a bijection between $\ko(M,q)$ and $\kq(M,q)$.
\end{lemma}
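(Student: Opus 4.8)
The plan is to show that the bijection given by $\mapH$ and $\mapL$ in the previous lemma restricts to a bijection between the refined sets $\ko(M,q)$ and $\kq(M,q)$. Since $\mapH$ and $\mapL$ are already known to be mutually inverse bijections between $\ko(M)$ and $\kq(M)$, it suffices to verify that $\mapH$ carries $\ko(M,q)$ into $\kq(M,q)$ and that $\mapL$ carries $\kq(M,q)$ into $\ko(M,q)$; the restricted maps are then automatically mutually inverse.

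The single computational input is Lemma~\ref{lem:nikulinI}: for any overlattice $(L,i)$ there is a natural identification $H_i^\perp/H_i \cong D_L$ under which $q_M|_{H_i^\perp/H_i} = q_L$. Before invoking it I would record the elementary but essential observation that an overlattice $i\colon M \embed L$ has finite index, so that $L\otimes\IQ = M\otimes\IQ$ as quadratic spaces and hence $\sign(L) = \sign(M)$ automatically. Consequently the genus condition $[L]\in\kg(\sign(M),q)$ is equivalent to the single condition $q_L\cong q$, the signature being free of charge.

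With this in hand the two inclusions are immediate. If $(L,i)\in\ko(M,q)$ then $q_L\cong q$, and Lemma~\ref{lem:nikulinI} gives $q_M|_{H_i^\perp/H_i} = q_L \cong q$, so $H_i\in\kq(M,q)$. For the converse, take $H\in\kq(M,q)$ and set $(L_H,i_H)=\mapL(H)$. Since $\mapH\mapL=\id$ we have $H_{i_H}=H$, so Lemma~\ref{lem:nikulinI} applied to $(L_H,i_H)$ yields $q_{L_H} = q_M|_{H^\perp/H} \cong q$; combined with $\sign(L_H)=\sign(M)$ this gives $[L_H]\in\kg(\sign(M),q)$, that is, $(L_H,i_H)\in\ko(M,q)$.

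There is no genuine obstacle here beyond bookkeeping, as the whole content is packaged in Lemma~\ref{lem:nikulinI}. The one point deserving a moment's care — and which I would state explicitly — is that the genus is pinned down by the discriminant form alone once the signature is fixed, so that matching $q_L$ against $q$ is precisely matching the genus of $L$ against $\kg(\sign(M),q)$; without the remark that $\sign(L)=\sign(M)$ the equivalence of the defining conditions would not be visible.
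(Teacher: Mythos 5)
Your proposal is correct and follows the paper's own argument: both restrict the bijection $\mapH$, $\mapL$ from the preceding lemma and verify that the images land in $\kq(M,q)$ and $\ko(M,q)$, using Lemma~\ref{lem:nikulinI} for one direction and the equality $\sign(L_H)=\sign(M)$ together with the hypothesis on $H$ for the other. Your explicit justification that the signature is automatic (finite index gives $L\otimes\IQ = M\otimes\IQ$) is a small but welcome elaboration of a step the paper merely asserts.
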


\begin{proof} \ 
We only have to check that the maps $\mapH\colon\ko(M,q)\to\kq(M)$ and
$\mapL\colon\kq(M,q)\to\ko(M)$ have image in $\kq(M,q)$ and $\ko(M,q)$,
respectively. For $\mapH$, this is part of Lemma~\ref{lem:nikulinI}.
For $\mapL$, we have $\sign(L_H)=\sign(M)$ and the discriminant form of $L_H$ is
$D_M|_{H\orth/H}\cong q$, by assumption on $H$.
\qed \end{proof}

In the course of our discussions we have to distinguish carefully between 
different notions equivalence of lattice embeddings.
The following notion is due to Nikulin (\cite[Proposition 1.4.2]{nikulin}):

\begin{definition} Two embeddings $i,i'\colon M\embed L$ define \emph{isomorphic overlattices},
denoted $i\simeq i'$, if there exists an isometry $f\in\OO(L)$ with $fi(M)=i'(M)$ --- inducing 
an isometry $f|_M\in\OO(M)$ --- or, equivalently if there is a commutative diagram:
\[ \xymatrix{
 M \ar[r]^i \ar[d]^{f|_M} & L \ar[d]^f \\
 M \ar[r]^{i'}           & L
} \]
\end{definition}
Note that this definition also makes sense if $M$ and $L$ do not
necessarily have the same rank. Two embeddings of lattices $i,i'\colon M\embed L$ of the same rank defining 
the \emph{same} overlattice are in particular isomorphic.

\begin{definition} 
Two embeddings $i,i'\colon M\embed L$ are \emph{stably isomorphic}, denoted $i\approx i'$,
if there exists a stable isometry $f\in\tilde\OO(L)$ with $fi(M)=i'(M)$, i.e.\ there is a commutative diagram
\[ \xymatrix{
 M \ar[r]^i \ar[d]^{f|_M} & L \ar[d]^{f \text{ stable}} \\
 M \ar[r]^{i'}         & L
} \]
\end{definition}
We note that embeddings of lattices of the same rank defining the same overlattice are not 
necessarily stably isomorphic. 

We can put this into a broader context. For this we consider the set 
\[
\ke(M,L):= \{ i\colon M \embed L \} 
\]
of embeddings of $M$ into $L$ where, for the time being, we do  not assume $M$ and $L$ to have the same rank.
The group $\OO(M) \times \OO(L)$ acts on this set by $(g,\tilde g)\colon i \mapsto \tilde{g} i g^{-1}$.
Instead of the action of $\OO(M) \times \OO(L)$ on $\ke(M,L)$ one can also consider the action of any subgroup
and we shall see specific examples later when we discuss Fourier-Mukai partners of $\Kthree$ surfaces.
If $M$ and $L$ have the same rank, then the connection with our previously considered equivalence
relations is the following:
\[
\ko(M,L)= (\{\id_M\} \times \OO(L)) \backslash  \ke(M,L).
\]
The set of all isomorphic overlattices of $M$ isomorphic to $L$ is given by 
$(\OO(M) \times \OO(L)) \backslash  \ke(M,L)$ whereas stably isomorphic embeddings are given by
$(\OO(M) \times \tilde\OO(L)) \backslash  \ke(M,L)$.




We now return to our previous discussion of the connection between overlattices and isotropic subgroups.

\begin{lemma} \label{lem:overlattices}
Let $i,i' \colon M \embed L$ be embeddings of lattices of the same rank. 
Then $i\simeq i'$ if and only if there exists an isometry $g\in\OO(M)$ such that $D_g(H_i)=H_{i'}$.
\end{lemma}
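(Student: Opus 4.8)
The plan is to reduce everything to the concrete model furnished by the bijection $\ko(M)\bijection\kq(M)$ and its proof. Via $\phi_i:=i^\vee\circ d_L$, each embedding $i\colon M\embed L$ is identified with the canonical overlattice $L_{H_i}=\pi\inv(H_i)\subseteq M^\vee$ (where $\pi\colon M^\vee\to D_M$ is the projection), in such a way that $\phi_i$ is an isometry $L\isom L_{H_i}$ satisfying $\phi_i\circ i=d_M$ and $H_i=L_{H_i}/d_M(M)\subseteq D_M$. The benefit of this reduction is that the ambient data $M\subseteq M^\vee\subseteq M_\IQ$ become independent of $i$, so that $i$ and $i'$ differ only in the isotropic subgroups $H_i,H_{i'}$ they cut out inside $D_M$. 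Throughout I will use that any $g\in\OO(M)$ extends to a $\IQ$-linear isometry $g_\IQ$ of $M_\IQ$ preserving both $M$ and $M^\vee$, that $D_g$ is by definition $g_\IQ|_{M^\vee}$ reduced modulo $M$, and hence that $g_\IQ(\pi\inv(H))=\pi\inv(D_g(H))$ for every isotropic $H\subseteq D_M$.

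For the forward direction, suppose $f\in\OO(L)$ realises $i\simeq i'$, i.e.\ $fi(M)=i'(M)$. Then $g:=(i')\inv\circ f\circ i$ is a well-defined isometry of $M$ (the induced $f|_M$), characterised by the relation $fi=i'g$. I would verify the compatibility $g_\IQ\circ\phi_i=\phi_{i'}\circ f$ by pairing both sides against an arbitrary $m\in M$: using that $g_\IQ$ and $f$ are isometries, together with the defining property $(\phi_i(v),m)_M=(v,i(m))_L$ and the substitution $i=f\inv i' g$, both sides reduce to $(f(v),i'(m))_L$; nondegeneracy of the pairing on $M_\IQ$ then forces the identity. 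Since $f(L)=L$, this yields $g_\IQ(L_{H_i})=L_{H_{i'}}$, and reducing modulo $d_M(M)$ gives $D_g(H_i)=H_{i'}$.

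For the converse, given $g\in\OO(M)$ with $D_g(H_i)=H_{i'}$, the observation $g_\IQ(\pi\inv(H))=\pi\inv(D_g(H))$ immediately produces $g_\IQ(L_{H_i})=L_{H_{i'}}$, so that $g_\IQ$ restricts to an isometry $L_{H_i}\isom L_{H_{i'}}$. Transporting this back through the identifications, $f:=\phi_{i'}\inv\circ g_\IQ\circ\phi_i$ is an isometry of $L$; and because $g_\IQ$ fixes $d_M(M)$ setwise while $\phi_i\circ i=\phi_{i'}\circ i'=d_M$, one checks directly that $fi(M)=i'(M)$, exhibiting $i\simeq i'$ with witness $f$.

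The only real subtlety—and thus the step I expect to be the main obstacle—is keeping the duality straight: an isometry $g$ of $M$ acts on $M^\vee$ by the inverse-transpose, so the defining pairing property $(\phi_i(v),\cdot)_M=(v,i(\cdot))_L$ must be applied in the correct slot, and $g^{-1}$ inserted correctly, when establishing $g_\IQ\phi_i=\phi_{i'}f$. The remaining points (that $g_\IQ$ preserves $M$ and $M^\vee$, that $\phi_i$ is an isometry onto $L_{H_i}$, and that passage to the quotient by $M$ is compatible with all the maps) are formal once the $\ko(M)\bijection\kq(M)$ correspondence is in hand.
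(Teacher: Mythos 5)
Your proof is correct and takes essentially the same route as the paper's: both directions hinge on identifying $(L,i)$ with $\pi\inv(H_i)\subseteq M^\vee$ via $v\mapsto(v,i(\cdot))_L$ and transporting the isometry along this identification (the paper's $g^\vee$ is your $g_\IQ$, and its $f=g^\vee|_L$ is your $\phi_{i'}\inv\circ g_\IQ\circ\phi_i$). The only difference is one of detail: you explicitly verify the forward direction through the relation $g_\IQ\circ\phi_i=\phi_{i'}\circ f$, where the paper simply asserts that $g:=f|_M$ "will have the correct property."
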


\begin{proof} \ 
Given $f\in\OO(L)$ with $fi(M)=i'(M)$, then $g:=f|_M$ will have the correct property.

Given $g$, recall that the lattices are obtained from their classifying subgroups as
$\pi^{-1}(H_i)$ and $\pi^{-1}(H_{i'})$. Then, $D_g(H_i)=H_{i'}$ implies that the map
$g^\vee\colon M^\vee\isom M^\vee$ induced from $g$ sends $L$ to itself, and $f=g^\vee|_L$ gives the
desired isomorphism.
\qed \end{proof}

Note that an isometry $g\in\OO(M)$ with $D_g(H_i)=H_{i'}$ induces an isomorphism
$H_i^\perp\isom H_{i'}^\perp$ and hence an isomorphism of the quotients. Recall that
there is a natural identification $H_i^\perp/H_i=D_L$.

\begin{lemma}
$i\approx i'$ if and only if there exists an isometry $g\in\OO(M)$ such that $D_g(H_i)=H_{i'}$
and the induced map $D_L=H_i^\perp/H_i \to H_{i'}^\perp/H_{i'}=D_L$ is the identity.
\end{lemma}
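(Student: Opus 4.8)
The plan is to build directly on Lemma~\ref{lem:overlattices}. Since $i \approx i'$ asks for the \emph{same} kind of isometry $f \in \OO(L)$ with $fi(M) = i'(M)$ as $i \simeq i'$ does, only with the additional requirement that $f$ be \emph{stable}, the sole new content is to translate the condition $D_f = \id_{D_L}$ into a statement purely about the associated $g \in \OO(M)$. Recall that the remark following Lemma~\ref{lem:overlattices} already attaches to any $g$ with $D_g(H_i) = H_{i'}$ the induced isomorphism $H_i^\perp/H_i \to H_{i'}^\perp/H_{i'}$, and that these quotients are identified with $D_L$. So the lemma follows once I show that this induced map is precisely $D_f$ for the $f$ that corresponds to $g$ in Lemma~\ref{lem:overlattices}.

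First I would fix the concrete model of $L$ used in the bijection $\ko(M) \bijection \kq(M)$: the embedding $j_i \colon L \embed M^\vee$, $v \mapsto (v, i(\cdot))_L$, has image $\pi^{-1}(H_i)$, and its $\IQ$-linear extension carries $L^\vee$ onto $\pi^{-1}(H_i^\perp)$; thus $D_L = L^\vee/L$ is identified with $H_i^\perp/H_i$, and similarly $j_{i'}$ identifies $D_L$ with $H_{i'}^\perp/H_{i'}$. The heart of the argument, and the step I expect to be the main obstacle, is the compatibility claim: if $g \in \OO(M)$ satisfies $D_g(H_i) = H_{i'}$, then $g^\vee$ maps $\image j_i = \pi^{-1}(H_i)$ onto $\image j_{i'} = \pi^{-1}(H_{i'})$, so $f := j_{i'}^{-1} \circ g^\vee \circ j_i \in \OO(L)$ is well defined and satisfies $j_{i'} \circ f = g^\vee \circ j_i$ by construction. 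Extending this identity $\IQ$-linearly to $L^\vee$ and passing to the quotient by $L$ then shows that $D_f \colon D_L \to D_L$ is exactly the map $H_i^\perp/H_i \to H_{i'}^\perp/H_{i'}$ induced by $D_g$. This is a diagram chase using only that $g^\vee$ is the restriction of $g_\IQ$ to $M^\vee$. A short separate computation, using $j_i \circ i = d_M = j_{i'} \circ i'$ and $g^\vee d_M = d_M g$, records the relation $fi = i'g$, hence $fi(M) = i'(M)$.

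Granting the compatibility, both implications are short. For the forward direction, a stable $f \in \tilde\OO(L)$ with $fi(M) = i'(M)$ yields $g := (i')^{-1} f i \in \OO(M)$ with $D_g(H_i) = H_{i'}$ by Lemma~\ref{lem:overlattices}, and the compatibility identifies the induced map on $D_L$ with $D_f$, which is the identity because $f$ is stable. Conversely, given $g \in \OO(M)$ with $D_g(H_i) = H_{i'}$ and induced map equal to $\id_{D_L}$, the isometry $f = j_{i'}^{-1} \circ g^\vee \circ j_i$ satisfies $fi(M) = i'(M)$, while the compatibility gives $D_f = \id_{D_L}$, i.e.\ $f \in \tilde\OO(L)$; thus $i \approx i'$.
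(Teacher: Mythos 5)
Your proposal is correct and takes essentially the same route as the paper: the paper's entire proof is the commutative diagram asserting that, under the identifications $H_i^\perp/H_i\cong D_L\cong H_{i'}^\perp/H_{i'}$ from Lemma~\ref{lem:nikulinI}, the map induced by $D_g$ equals $D_f$ for the isometry $f=g^\vee|_L$ produced in Lemma~\ref{lem:overlattices}, which is precisely your compatibility claim verified through the embeddings $j_i\colon L\embed M^\vee$ with image $\pi^{-1}(H_i)$. The details you add (the identities $j_i\circ i=d_M$, $g^\vee d_M=d_M g$, $fi=i'g$, and the identification of $L^\vee$ with $\pi^{-1}(H_i^\perp)$) are exactly what the paper leaves implicit by citing the proof of Lemma~\ref{lem:overlattices}.
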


\begin{proof} \ 
Just assuming $D_g(H_i)=H_{i'}$, we get a commutative diagram
\[ \xymatrix@C=3em{
   H_i    \ar@{^{(}->}[r] \ar[d]_{D_g} &
   H_i^\perp \ar@{->>}[r]^{(i^\vee)^{-1}} \ar[d]_{D_g} &
   D_L \ar[d]^{D_f}
\\
   H_{i'} \ar@{^{(}->}[r] &
   H_{i'}^\perp \ar@{->>}[r]^{(i'^{\vee})^{-1}} &
   D_L
} \]
which, together with the proof of Lemma~\ref{lem:overlattices} shows the claim.
\qed \end{proof}


\subsection*{Overlattices from primitive embeddings}

A natural source of overlattices is $M:=T\oplus T\orth \subset L$ for any
sublattice $T\subset L$. If $T$ is moreover a primitive sublattice of $L$, then
the theory sketched above can be refined, as we explain next. We start with an elementary
lemma.

\begin{lemma} \label{lem:primitive}
Let $A,B\subset L$ be two sublattices such that $i\colon A\oplus B \embed L$ is an
overlattice, i.e.\ $A$ and $B$ are mutually orthogonal and $\rk(A\oplus B)=\rk(L)$.
Then $p_A \colon H_i\embed D_{A\oplus B}\surject D_A$ is injective if
and only if $B$ is primitive in $L$.
\end{lemma}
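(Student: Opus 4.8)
The plan is to compute the kernel of $p_A|_{H_i}$ explicitly and to identify it with the quotient $B^{\perp\perp}/B$, which measures exactly the failure of $B$ to be primitive in $L$. Recall that the primitive hull of $B$ is $B^{\perp\perp}$ and that $B$ is primitive if and only if $B=B^{\perp\perp}$.

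First I would unwind the definitions with $M:=A\oplus B$, so that $M^\vee=A^\vee\oplus B^\vee$ and $D_M=D_A\oplus D_B$. By construction $H_i\subseteq D_{A\oplus B}$ is the image of $L/i(A\oplus B)$, and the composite $L\maps{d_L}L^\vee\maps{i^\vee}M^\vee$ sends $\ell$ to the restriction of the functional $(\ell,\cdot)$ to $A\oplus B$, i.e.\ to $(\ell|_A,\ell|_B)\in A^\vee\oplus B^\vee$. Hence $p_A$ is just the projection to $D_A$, and a class $h=[\ell]\in H_i$ lies in $\ker(p_A|_{H_i})$ precisely when $\ell|_A\in A$ (the image of $A$ in $A^\vee$), equivalently when there is $a_0\in A$ with $\ell-a_0\in A\orth$.

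Second, I would record the auxiliary identity $A\orth=B^{\perp\perp}$. Since $A\oplus B$ has full rank in the non-degenerate lattice $L$, the restricted form on $A\oplus B$ is non-degenerate, so $A$ and $B$ are each non-degenerate. Therefore $\rk(A\orth)=\rk(L)-\rk(A)=\rk(B)$ and $A\cap A\orth=0$. As orthogonal complements are always primitive and $B\subseteq A\orth$ with equal rank, $A\orth$ is the primitive hull of $B$, i.e.\ $A\orth=B^{\perp\perp}$.

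The core step is then to build a group isomorphism $\ker(p_A|_{H_i})\isom B^{\perp\perp}/B$. Given $h\in\ker(p_A|_{H_i})$, I represent it by $\ell\in L$ and write $\ell=a_0+w$ with $a_0\in A$ and $w\in A\orth=B^{\perp\perp}$; because $A\cap A\orth=0$ this decomposition of a given $\ell$ is unique, and I set $h\mapsto[w]\in B^{\perp\perp}/B$. I would then check the three routine points: well-definedness (replacing $\ell$ by $\ell+a'+b'$ with $a'\in A$, $b'\in B$ changes $w$ only by $b'\in B$), injectivity (if $[w]=0$ then $w\in B$, so $\ell=a_0+w\in A\oplus B=i(M)$ and $h=0$), and surjectivity (any $w\in B^{\perp\perp}=A\orth\subseteq L$ satisfies $w|_A=0$, so $[w]\in\ker(p_A|_{H_i})$ and maps to $[w]$). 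Consequently $p_A|_{H_i}$ is injective if and only if $B^{\perp\perp}/B=0$, i.e.\ if and only if $B$ is primitive in $L$. The main obstacle is the bookkeeping in the core step — correctly pinning down the kernel as classes of $\ell\in A+A\orth$ and verifying well-definedness of $h\mapsto[w]$ — together with the non-degeneracy and rank count needed to establish $A\orth=B^{\perp\perp}$; once these are secured, the stated equivalence is immediate.
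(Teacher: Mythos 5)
Your proof is correct and takes essentially the same route as the paper: both arguments identify $\ker(p_A|_{H_i})$ with $B^{\perp\perp}/B$, using the key fact that the kernel of $L\to A^\vee$, $v\mapsto (v,\cdot)|_A$, is the orthogonal complement $A\orth$, which equals the primitive hull $B^{\perp\perp}$ of $B$. The paper packages this as a snake-lemma kernel sequence $0\to B\to B^{\perp\perp}\to\ker(p_A)\to 0$, while you unwind the same isomorphism explicitly via the decomposition $\ell=a_0+w$; the content is identical.
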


\begin{proof} \ 
The commutative diagram with exact rows
\[ \xymatrix{
0 \ar[r] & A\oplus B \ar[r] \ar[d] & L \ar[r] \ar[d] & H_i \ar[r] \ar[d] & 0 \\
0 \ar[r] & A         \ar[r]        & A^\vee \ar[r]    & D_A \ar[r] & 0
} \]
leads to the following short exact sequence of the kernels:
\[ 0 \to B \to B^{\perp\perp} \to \ker(p_A) \to 0 \]
(note that the kernel of the map $L\to A^\vee, v\mapsto \langle v,\cdot\rangle|_A$
is the primitive hull of $B$). Hence $p_A$ is injective if and only if $B=B^{\perp\perp}$,
i.e.\ $B$ is a primitive sublattice.
\qed \end{proof}

\begin{example}
We consider the rank $2$ lattice $L$ with Gram matrix $\sqmat{2}{0}{0}{2}$; let $e_1, e_2$ be an orthogonal basis, so that $e_1^2=e_2^2=2$. With $T=\langle 8\rangle$ having basis $2e_1$
and $K:=T\orth$, we get that $H_i\to D_T$ is injective whereas $H_i\to D_K$ is not.
\end{example}

Let $j_T\colon T\embed L$ be a sublattice and $K:=T\orth$ its orthogonal complement
with embedding $j_K\colon K\embed L$. By Lemma~\ref{lem:nikulinI}, the overlattice
 $i:=j_T\oplus j_K \colon T\oplus K \embed L$
corresponds to the isotropic subgroup
 $H_i\subset D_{T\oplus K}$.

By Lemma~\ref{lem:primitive}, the map $p_T \colon H_i \embed D_{T\oplus K}\surject D_T$
is always injective, since $K\subset L$ is an orthogonal complement, hence primitive.
The map $p_K \colon H_i\embed D_{T\oplus K}\surject D_K$ is injective if and only if
$T\subset L$ is a primitive sublattice.

If $j_T\colon T\embed L$ is primitive, then $\Gamma_i:=p_T(H_i)\subseteq D_T$ is a
subgroup such that there is a unique, injective homomorphism
$\gamma_i\colon \Gamma_i\to D_K$. The image of $\gamma_i$ is $p_K(H_i)$ and its graph
is $H_i$.

For fixed lattices $L$, $K$, $T$ 
we introduce the follwoing sets
\begin{align*}
\kp(T,L)   &:= \{ j_T\colon T\embed L \text{ primitive} \}, \\
\kp(T,K,L) &:= \Big\{ (j_T,j_K) \:| \begin{array}{l} j_T \in \kp(T,L), j_K \in \kp(K,L) \\
                                                     j_T\oplus j_K\in\ke(T\oplus K,L)
                                    \end{array} \Big\}.
\end{align*}
As in the previous section we can consider various notions of equivalence on the set 
$\kp(T,K,L)$ by considering the action of suitable subgroups of $\OO(T) \times \OO(K) \times \OO(L)$.
Since we are only interested in overlattices in this section we shall assume for the rest of
this section that
\begin{assumption}
$\qquad\qquad \rk (T) + \rk(K) = \rk(L)$.
\end{assumption}
In the previous section we said that two embeddings define the same overlattice if they differ by the action
of $\{\id_T\} \times \{\id_K\} \times \OO(L)$ and accordingly we set
\[
\ko(T,K,L) = (\{\id_T\} \times \{\id_K\} \times \OO(L)) \backslash \kp(T,K,L) .
\]

We now also consider a quadratic form $(D,q)$ which will play the role of the discriminant of the overlattice. 
Choosing a representative $L$ for each element in $\kg(\sign(T\oplus K),q)$,
we also introduce the equivalents of the sets of the previous section:
\begin{align*}
\kp(T,K,q) &:= \Big\{ (L,j_T,j_K) \:| \begin{array}{l} [L]\in\kg(\sign(T\oplus K),q), \\
                                                       (j_T,j_K)\in\kp(T,K,L)
                                      \end{array} \Big\}, \\
\kq(T,K,q) &:= \{ H \in \kq(T\oplus K,q) \mid p_T|_H \text{ and } p_K|_H \text{ are injective} \} .
\end{align*}
Dividing out by the action of the overlattice we also consider $\ko(T,K,q)$.
The condition in the definition of $\kq(T,K,q)$ means that $H$ is the graph of an
injective group homomorphism $\gamma\colon\Gamma\embed D_K$ with $\Gamma:=p_T(H)$
and $\image(\gamma)=p_K(H)$. Note that $q_{T\oplus K}|_H=0$ is equivalent to
$q_K\gamma=-q_T|_\Gamma$.

Evidently, $\kp(T,K,q)$, respectively $\ko(T,K,q)$, is the disjoint union of $\kp(T,K,L)$, 
respectively $\ko(T,K,L)$ over representative lattices $L$ of the genus prescribed by 
$\sign(T\oplus K)$ and discriminant form $q$.
The difference between $\kp(T,K,q)$ and $\kp(T,K,L)$ is that the former set does not
specify the overlattice but just its genus and we need $\kp(T,K,q)$ because we are
interested in describing lattices by discriminant forms, but the latter only see the
genus.


\begin{lemma} \label{lem:nikulin151}
For $T$, $K$ and $q$ as above, the sets $\ko(T,K,q)$ and $\kq(T,K,q)$ are in bijection.
\end{lemma}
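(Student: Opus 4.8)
The plan is to deduce the statement from the bijection $\ko(M,q)\isom\kq(M,q)$ already established in the previous lemma, specialised to the split lattice $M := T\oplus K$, by checking that the two subsets singled out in $\ko(T,K,q)$ and $\kq(T,K,q)$ correspond to one another under that bijection. No new construction is needed; everything reduces to the prior correspondence together with Lemma~\ref{lem:primitive}.

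First I would record the identification on which the reduction rests. An embedding $i\colon M=T\oplus K\embed L$ of the same rank is precisely the datum of a pair $(j_T,j_K)$ with $j_T:=i|_T$ and $j_K:=i|_K$, which are automatically mutually orthogonal since $i$ is an isometry and $T\perp K$ in $M$. Thus $\ke(M,L)$ is identified with the ambient set of $\kp(T,K,L)$, and under this identification the action of $\{\id_M\}\times\OO(L)$ on $\ke(M,L)$ restricts to the action of $\{\id_T\}\times\{\id_K\}\times\OO(L)$ defining $\ko(T,K,L)$ (note $\id_M=\id_T\oplus\id_K$). Because an isometry $f\in\OO(L)$ carries primitive sublattices to primitive sublattices, the subset $\kp(T,K,L)\subseteq\ke(M,L)$ is a union of orbits, so $\ko(T,K,L)$ sits naturally inside $\ko(M,L)$; summing over genus representatives $L$ gives $\ko(T,K,q)\subseteq\ko(M,q)$. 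Likewise $\kq(T,K,q)\subseteq\kq(M,q)$ by definition, since $\kq(T\oplus K,q)=\kq(M,q)$.

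Next I would restrict the bijection $\mapH\colon\ko(M,q)\isom\kq(M,q)$, $(L,i)\mapsto H_i$, to these subsets, the key point being that the defining conditions match exactly. Applying Lemma~\ref{lem:primitive} once with $(A,B)=(T,K)$ and once with $(A,B)=(K,T)$ shows that $j_K$ is primitive if and only if $p_T|_{H_i}$ is injective, and $j_T$ is primitive if and only if $p_K|_{H_i}$ is injective. Hence $(L,i)\in\ko(T,K,q)$, i.e.\ both $j_T$ and $j_K$ are primitive, if and only if both $p_T|_{H_i}$ and $p_K|_{H_i}$ are injective, i.e.\ $H_i\in\kq(T,K,q)$. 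Since $\mapH$ is a bijection and an element of the source lies in $\ko(T,K,q)$ precisely when its image lies in $\kq(T,K,q)$, the restriction $\mapH|_{\ko(T,K,q)}$ is a bijection onto $\kq(T,K,q)$, with inverse the restriction of $\mapL$ (which lands in $\ko(T,K,q)$ by the same equivalence, the injectivity of $p_T|_H,p_K|_H$ forcing $T$ and $K$ to be primitive in $L_H$). I expect the only real work, and hence the main albeit modest obstacle, to be the bookkeeping of this reduction: verifying that the relation ``defining the same overlattice'' on $\ke(M,L)$ restricts compatibly to $\kp(T,K,L)$ so that $\ko(T,K,q)$ genuinely embeds as a subset of $\ko(M,q)$, and that the common discriminant-form condition $q_M|_{H^\perp/H}\cong q$ is respected on both sides. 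The substantive lattice-theoretic input is supplied entirely by Lemma~\ref{lem:primitive} and the prior bijection.
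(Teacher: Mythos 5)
Your proposal is correct and follows essentially the same route as the paper: both arguments view $\ko(T,K,q)$ and $\kq(T,K,q)$ as subsets of $\ko(T\oplus K,q)$ and $\kq(T\oplus K,q)$, and then use Lemma~\ref{lem:primitive} (in both directions of its ``if and only if'') to check that the prior bijection $\mapH$, $\mapL$ carries one subset exactly onto the other. The only cosmetic difference is that the paper re-verifies orthogonality and $\IQ$-spanning of $j_T,j_K$ inside $L_H$ by hand for the $\mapL$ direction, whereas you absorb this into your initial identification of embeddings $T\oplus K\embed L$ with pairs of mutually orthogonal embeddings.
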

\begin{proof} \ 
The main idea is that the restrictions of $\mapH$ and $\mapL$ to the newly introduced 
sets factor as follows
\[ \xymatrix{
 \ko(T,K,q) \ar@{^{(}->}[r] \ar@{-->}@<1ex>[d] & \ko(T\oplus K,q) \ar@<1ex>[d]^\mapH &
                             (L,j_T,j_K) \mapsto (L,j_T\oplus j_K) \\
 \kq(T,K,q) \ar@{^{(}->}[r] \ar@{-->}@<1ex>[u] & \kq(T\oplus K,q) \ar@<1ex>[u]^\mapL &
                             H \mapsto H
} \]

Indeed, the map $\mapH|_{\kp(T,K,q)}$ factors via $\kq(T,K,q)$ in view of Lemma~\ref{lem:primitive}.

In order to see that $\mapL|_{\kq(T,K,q)}$ factors over $\ko(T,K,q)$, we take an
isotropic subgroup $H\subset D_{T\oplus K}$. Then we can form the overlattice
$L_H=\pi^{-1}(H)$ of $T\oplus K$. Obviously, this gives embeddings
$j_T\colon T\embed L_H$ and $j_K\colon K\embed L_H$. These are primitive since the
projections $H\to p_T(H)$ and $H\to p_K(H)$ are isomorphisms. Next, the sublattices
are orthogonal to each other:
 $j_T\colon T\to T^\vee\oplus K^\vee, v\mapsto (\langle v,\cdot\rangle,0)$ and
 $j_K\colon K\to T^\vee\oplus K^\vee, w\mapsto (0,\langle w,\cdot\rangle)$.
Finally, they obviously span $L_H$ over $\IQ$.
\qed \end{proof}

Fix a subgroup $G_T\subseteq\OO(T)$. Two pairs $(L,i,j),(L',i',j')\in\kp(T,K,q)$
are called \emph{$G_T$-equivalent} if there is an  isometry
$\varphi \colon L \cong L'$ such that $\varphi(iT)=i'T$ and
 $\varphi_T:=(i')^{-1} \circ \varphi|_{i(T)} \circ i\in G_T$
for the induced isometry of $T$.


\begin{lemma} \emph{\cite[1.15.1]{nikulin}} \label{lem:G-eq1}
Let $H,H'\in\kq(T,K,q)$. Then $\mapL(H)$ and $\mapL(H')$ are $G_T$-equivalent
if and only if there is $\psi\in G_T\times\OO(K)$ such that $D_\psi(H)=H'$.
\end{lemma}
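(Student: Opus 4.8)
The plan is to deduce this from Lemma~\ref{lem:overlattices}, the only genuinely new ingredient being that $G_T$-equivalence, by fixing the image of $T$, forces any witnessing isometry to respect the entire orthogonal splitting $T\oplus K$. Throughout I write $\mapL(H)=(L,i,j)$ and $\mapL(H')=(L',i',j')$, so that $L=\pi^{-1}(H)$ and $L'=\pi^{-1}(H')$ sit inside $(T\oplus K)^\vee$, with $i,i'$ the induced primitive embeddings of $T$ and $j,j'$ those of $K$; recall that the classifying subgroup of $i\oplus j\colon T\oplus K\embed L$ is again $H$ since $\mapH\mapL=\id$. The basic mechanism, already present in the proof of Lemma~\ref{lem:overlattices}, is that an isometry $\psi\in\OO(T\oplus K)$ extends $\IQ$-linearly to a map $\psi^\vee$ of $(T\oplus K)^\vee$ inducing $D_\psi$ on $D_{T\oplus K}$, and that $D_\psi(H)=H'$ holds if and only if $\psi^\vee(L)=L'$.

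For the direction ``$\psi$ exists $\shortimplies$ $G_T$-equivalent'', I would take $\psi=(g,h)\in G_T\times\OO(K)$ with $D_\psi(H)=H'$ and set $\varphi:=\psi^\vee|_L\colon L\isom L'$, which is an isometry by the mechanism above. Since $\psi^\vee$ restricts to $\psi$ on $T\oplus K$, one gets $\varphi(iT)=i'(gT)=i'T$ and the induced isometry $\varphi_T=g$ lies in $G_T$; hence $\mapL(H)$ and $\mapL(H')$ are $G_T$-equivalent.

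For the converse I would start from an isometry $\varphi\colon L\isom L'$ with $\varphi(iT)=i'T$ and $\varphi_T\in G_T$, and show that $\varphi$ respects the splitting. As $\varphi$ is an isometry sending $iT$ onto $i'T$, it sends $(iT)^\perp$ onto $(i'T)^\perp$; but $K=T^\perp$ is primitive of complementary rank, so $(iT)^\perp=jK$ in $L$ and likewise $(i'T)^\perp=j'K$ in $L'$, whence $\varphi(jK)=j'K$. Thus $\varphi$ maps the finite-index sublattice $T\oplus K\subseteq L$ isomorphically onto $T\oplus K\subseteq L'$, restricting there to $\psi:=\varphi_T\oplus\varphi_K\in G_T\times\OO(K)$. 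Because $T\oplus K$ has finite index, the extension of $\psi$ to $L$ is unique, so $\varphi=\psi^\vee|_L$, and therefore $D_\psi(H)=H'$ by the mechanism above.

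The crux --- and the only step beyond the dual-lattice bookkeeping of Lemma~\ref{lem:overlattices} --- is the identification $(iT)^\perp=jK$, i.e.\ that preserving the image of $T$ already pins down the behaviour on $K$. This is exactly what allows the unconstrained group $\OO(T\oplus K)$ of Lemma~\ref{lem:overlattices} to be replaced here by the subgroup $G_T\times\OO(K)$: the $K$-component of $\psi$ is free in $\OO(K)$, while the $T$-component is precisely the one forced to lie in $G_T$.
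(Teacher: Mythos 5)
Your proof is correct and follows essentially the same route as the paper's: in one direction you restrict $\psi^\vee$ to $L_H$ to obtain $\varphi$, and in the other you use that $\varphi$ preserves $jK=(iT)^\perp$ to split it as $\varphi_T\oplus\varphi_K$ on the finite-index sublattice $iT\oplus jK$ and then identify $D_\varphi$ with $D_\psi$. The only (cosmetic) difference is that the paper matches $D_\varphi$ with $D_\psi$ via a commutative diagram of short exact sequences, whereas you invoke uniqueness of the $\IQ$-linear extension together with the correspondence from Lemma~\ref{lem:overlattices}.
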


\begin{proof} \ 
First note that the condition $D_\psi(H)=H'$ is equivalent to the one in
\cite{nikulin}: there are $\psi_T\in G_T$ and $\psi_K\in\OO(K)$ such that
$D_{\psi_T}(\Gamma)=\Gamma'$ and $D_{\psi_K}\gamma=\gamma'D_{\psi_T}$ where
$H$ and $H'$ are the graphs of $\gamma\colon \Gamma\to D_K$ and
$\Gamma'\colon H'\to D_K$, respectively.

Suppose that $(L,i,j)$ and $(L',i',j')$ are $G_T$-equivalent. Thus there is an isometry
$\varphi L \colon L'$ with $\varphi(iT)=i'T$. In particular,
$\varphi(i(T)\orth_L)=i'(T)\orth_{L'}$; using the isomorphisms $j$ and $j'$
we get an induced isometry $\varphi_K\in\OO(K)$.
We have established the following commutative diagram with exact rows
\[ \xymatrix{
0 \ar[r] & iT\oplus jK   \ar[r] \ar[d]^{\varphi_T\oplus\varphi_K} & L \ar[r] \ar[d]^\varphi
         & L/(iT\oplus jK) \ar[r] \ar@{..>}[d]^{D_\varphi} & 0 \\
0 \ar[r] & i'T\oplus j'K \ar[r] & L' \ar[r] & L'/(i'T\oplus j'K) \ar[r] & 0
} \]
Put $\psi:=(\varphi_T,\varphi_K)\in G_T\times\OO(K)$.
Using the identification of $L/(iT\oplus jK)$ with $H\subset D_{T\oplus K}$
obtained from $i$ and $j$ (and analogously for $H'$), the isomorphism
$D_\varphi$ on discriminants turns into the isomorphism $D_\psi\colon H\isom H'$.
Note that by construction $\psi^\vee|_L=\varphi$.

Given $\psi\in G_T\times\OO(K)$, consider the induced isomorphism on the dual
$\psi^\vee\colon (T\oplus K)^\vee\isom (T\oplus K)^\vee$.
By the assumption $D_\psi(H)=H'$, this isomorphism restricts to
$\varphi:=\psi_T^\vee\oplus\psi_K^\vee|_{L_H}\colon L_H \isom L_{H'}$.
Finally, under the embeddings $i_H, j_H, i_{H'}, j_{H'}$ the
induced isometries of $\varphi$ combine to $(\varphi_T,\varphi_K)=\psi$.
\qed \end{proof}

\begin{assumption} \label{assumption}
From now on we suppose that the embedding lattice is uniquely determined by the
signature (derived from $T\oplus K$) and the discriminant form $q$. In other words,
we postulate that there be a single lattice $L$ in that genus, i.e.\
$\kp(T,K,L)=\kp(T,K,q_L)$.
\end{assumption}

We say that two primitive embeddings $(i,j),(i',j')\in\kp(T,K,L)$ are \emph{$G_L$-equivalent} if
there is an isometry $f\in G_L$ such that $f(iT\oplus jK)=i'T\oplus j'K$.

Combining the two isometry subgroups $G_T\subset\OO(T)$ and $G_L\subset\OO(L)$, we say
that $(i,j),(i',j')\in\kp(T,K,L)$ are \emph{equivalent up to $G_L$ and $G_T$} if there
is an isometry $f\in G_L$ such that $f(iT\oplus jK)=i'T\oplus j'K$ and $f(iT)=i'T$ and
$f_T\in G_T$ for the induced isometry.

For later use, we now present a version of Lemma~\ref{lem:G-eq1} in the presence of
a subgroup $G_L$ but with Assumption~\ref{assumption}.

\begin{lemma} \label{lem:G-eq2}
Assume that $L$ is an overlattice of $T\oplus K$ which is unique within its genus.
Let $H,H'\in\kq(T,K,q_L)$.

Then $\mapL(H)$ and $\mapL(H')$ are equivalent in $\kp(T,K,L)$ up to $G_L$ and
$G_T$ if and only if there is an isometry $\psi\in G_T\times\OO(K)$ such that
$D_\psi(H)=H'$ and $\psi^\vee|_L\in G_L$.
\end{lemma}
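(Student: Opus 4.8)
The plan is to bootstrap from Lemma~\ref{lem:G-eq1}, whose proof already records the precise correspondence between isometries of the overlattice and pairs $\psi\in\OO(T)\times\OO(K)$; the only genuinely new ingredient here is to track the subgroup $G_L$ through that correspondence. First I would invoke Assumption~\ref{assumption}: since $L$ is unique in its genus, both $\mapL(H)$ and $\mapL(H')$ may be regarded as primitive embeddings into one and the same lattice $L$, so that $\mapL(H)=(L,i,j)$ and $\mapL(H')=(L,i',j')$ lie in $\kp(T,K,L)$. This is exactly what is needed for the phrase ``$\psi^\vee|_L\in G_L$'' (and symmetrically ``$f\in G_L$'') to be meaningful, because $G_L\subseteq\OO(L)$ refers to self-isometries of the single lattice $L$ rather than to isomorphisms between distinct representatives.

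The heart of the argument is the identity $\psi^\vee|_L=\varphi$ established in the proof of Lemma~\ref{lem:G-eq1}. I would restate this as a bijective dictionary between the two kinds of data. On the one hand, an isometry $\varphi\in\OO(L)$ with $\varphi(iT)=i'T$ necessarily sends $(iT)^\perp=jK$ to $(i'T)^\perp=j'K$, hence restricts to $\varphi_T\in\OO(T)$ and $\varphi_K\in\OO(K)$. On the other hand, a pair $\psi=(\psi_T,\psi_K)\in\OO(T)\times\OO(K)$ induces $\psi^\vee$ on $(T\oplus K)^\vee$, and the condition $D_\psi(H)=H'$ is precisely what guarantees that $\psi^\vee$ carries $L_H=\pi^{-1}(H)$ onto $L_{H'}=\pi^{-1}(H')$, so that $\psi^\vee|_L\in\OO(L)$. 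Under this dictionary the assignments $\varphi\mapsto(\varphi_T,\varphi_K)$ and $\psi\mapsto\psi^\vee|_L$ are mutually inverse, $\psi^\vee|_T=\psi_T$, and the relation $\varphi(iT\oplus jK)=i'T\oplus j'K$ corresponds exactly to $D_\psi(H)=H'$.

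With this dictionary both implications become immediate. For the forward direction, an equivalence of $\mapL(H)$ and $\mapL(H')$ up to $G_L$ and $G_T$ supplies $f\in G_L$ with $f(iT\oplus jK)=i'T\oplus j'K$, $f(iT)=i'T$ and $f_T\in G_T$; setting $\psi:=(f_T,f_K)$ gives $\psi\in G_T\times\OO(K)$ with $D_\psi(H)=H'$ by Lemma~\ref{lem:G-eq1}, while $\psi^\vee|_L=f\in G_L$ delivers the extra condition. For the converse, a $\psi\in G_T\times\OO(K)$ with $D_\psi(H)=H'$ and $\psi^\vee|_L\in G_L$ yields $\varphi:=\psi^\vee|_L\in G_L$; Lemma~\ref{lem:G-eq1} already gives $\varphi(iT\oplus jK)=i'T\oplus j'K$ and $\varphi(iT)=i'T$, and $\varphi_T=\psi_T\in G_T$ by the first component of $\psi$, so the two embeddings are equivalent up to $G_L$ and $G_T$.

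The only real obstacle I anticipate lies not in the logic but in the bookkeeping of the $G_L$-condition: I must be certain that the single isometry $f=\varphi$ occurring in the definition of ``equivalent up to $G_L$ and $G_T$'' is literally the same self-isometry of $L$ as $\psi^\vee|_L$, so that ``$f\in G_L$'' and ``$\psi^\vee|_L\in G_L$'' are identical constraints and not merely analogous ones. This rests squarely on Assumption~\ref{assumption}: without genus-uniqueness, $\varphi$ would only be an isometry $L_H\isom L_{H'}$ between a priori different representatives and membership in the fixed group $G_L\subseteq\OO(L)$ would be ill-posed. Checking that $\varphi_K$ is well defined and that the two assignments are genuinely inverse to one another is then routine, given the explicit description of $i_H$ and $j_H$ from the proof of Lemma~\ref{lem:nikulin151}.
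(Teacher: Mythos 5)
Your proposal is correct and follows exactly the paper's route: the paper's own proof consists of the remark that the $G_L$-action is well defined by Assumption~\ref{assumption} and that the argument is otherwise identical to that of Lemma~\ref{lem:G-eq1}, which is precisely the reduction you carry out via the identity $\psi^\vee|_L=\varphi$ recorded in that proof. You have simply made explicit the bookkeeping (the dictionary $\varphi\leftrightarrow(\varphi_T,\varphi_K)$ and the role of genus-uniqueness in making membership in $G_L\subseteq\OO(L)$ meaningful) that the paper leaves implicit.
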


\begin{proof} \ 
Note that the $G_L$-action is well defined by Assumption~\ref{assumption}.
The proof of the lemma is the same as the one of Lemma~\ref{lem:G-eq1},
taking into account the additional assumption.
\qed \end{proof}

\begin{lemma}[{\cite[Lemma 23]{morrison}}] \label{lem:morrison}
Let $L$ be an overlattice of $T\oplus K$ such that $L$ is unique in its
genus and let $K'$ be a lattice in the genus of $K$. Then there is a 
bijection $\ko(T,K,L)\bijection\ko(T,K',L)$.

In particular, there is a primitive embedding $K'\embed L$ such that
$L$ becomes an overlattice of $T\oplus K'$.
\end{lemma}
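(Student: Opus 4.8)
The plan is to transport everything to the level of isotropic subgroups by means of Lemma~\ref{lem:nikulin151}, and to exploit the fact that the set $\kq(T,K,q_L)$ depends on $K$ only through its discriminant form $(D_K,q_K)$. Since $K$ and $K'$ lie in the same genus, these discriminant forms agree, which forces the corresponding subgroup sets to be in bijection; combined with the uniqueness of $L$ in its genus this gives the claim.

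First I would record the two identifications $\ko(T,K,L)=\ko(T,K,q_L)$ and $\ko(T,K',L)=\ko(T,K',q_L)$. The first is precisely Assumption~\ref{assumption}. For the second, note that $K$ and $K'$ being in the same genus have equal rank and signature, so $\rk(T\oplus K')=\rk(L)$ and $\sign(T\oplus K')=\sign(T\oplus K)=\sign(L)$. As $L$ carries the discriminant form $q_L$ and is the unique lattice in $\kg(\sign(T\oplus K),q_L)$, it is equally the unique lattice in $\kg(\sign(T\oplus K'),q_L)$; hence that genus consists of a single lattice and the two sets again coincide. Applying Lemma~\ref{lem:nikulin151} to $K$ and to $K'$ then yields bijections
\[ \ko(T,K,q_L)\bijection\kq(T,K,q_L), \qquad \ko(T,K',q_L)\bijection\kq(T,K',q_L). \]

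The heart of the proof is a bijection $\kq(T,K,q_L)\bijection\kq(T,K',q_L)$. By condition (2) in the definition of genus, $K$ and $K'$ being in the same genus provide an isometry of discriminant forms $\phi\colon(D_K,q_K)\isom(D_{K'},q_{K'})$, and hence an isometry $\id\oplus\phi\colon D_{T\oplus K}=D_T\oplus D_K\isom D_T\oplus D_{K'}=D_{T\oplus K'}$. I would send $H\mapsto(\id\oplus\phi)(H)$ and check that this lands in $\kq(T,K',q_L)$ and is bijective: isotropy is preserved because $\id\oplus\phi$ is a form isometry; injectivity of $p_T|_H$ and $p_{K'}|_{(\id\oplus\phi)(H)}$ is preserved because $\id\oplus\phi$ respects the splitting and restricts to the identity on $D_T$ and to $\phi$ on $D_K$; and since $\id\oplus\phi$ maps $H^\perp$ onto $((\id\oplus\phi)H)^\perp$ isometrically, it induces an isometry on the quotients $H^\perp/H$, so the condition $q_{T\oplus K'}|_{H^\perp/H}\cong q_L$ persists. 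Composing the three bijections gives $\ko(T,K,L)\bijection\ko(T,K',L)$.

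For the final assertion, observe that $L$ is by hypothesis already an overlattice of $T\oplus K$, so the given embeddings define a class in $\ko(T,K,L)$, which is therefore non-empty. Under the bijection $\ko(T,K',L)$ is non-empty as well, and any of its elements is a pair of primitive embeddings $(j_T,j_{K'})$ with $j_T\oplus j_{K'}\in\ke(T\oplus K',L)$; in particular it provides a primitive embedding $K'\embed L$ exhibiting $L$ as an overlattice of $T\oplus K'$. The step I expect to be the main obstacle is the middle verification that $\id\oplus\phi$ preserves all three defining conditions of $\kq(-,-,q_L)$ at once --- isotropy and injectivity are immediate, but the discriminant-form condition on $H^\perp/H$ requires the compatible identification $H^\perp/H\cong D_L$ of Lemma~\ref{lem:nikulinI}, and it is here that the genus hypothesis on $K$ (rather than a mere abstract isomorphism of lattices) is used.
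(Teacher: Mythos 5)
Your proposal is correct and follows essentially the same route as the paper: reduce via Lemma~\ref{lem:nikulin151} and Assumption~\ref{assumption} to the chain $\ko(T,K,L)\bijection\kq(T,K,L)\bijection\kq(T,K',L)\bijection\ko(T,K',L)$, the middle bijection resting on the fact that $\kq(T,K,q_L)$ depends on $K$ only through its discriminant form, which $K'$ shares by the genus hypothesis. The paper states this middle step in one sentence, while you make it explicit by transporting subgroups along $\id\oplus\phi$ for an isometry $\phi\colon(D_K,q_K)\isom(D_{K'},q_{K'})$ and verifying the three defining conditions --- a worthwhile elaboration, but the same argument.
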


\begin{proof} \ 
We observe that the set $\kq(T,K,q)=\kq(T,K,L)$ does not really depend on $K$, but
rather just on the discriminant form $q_K$.
Hence from Lemma~\ref{lem:nikulin151} and using Assumption~\ref{assumption}
we get a chain of bijections
\[ \ko(T,K,L) \bijection \kq(T,K,L) \bijection \kq(T,K',L) \bijection \ko(T,K',L) \]
and hence the claim.
\qed\end{proof}

The situation is particularly nice for indefinite unimodular overlattices
where we recover a result proved by Hosono et al.:

\begin{corollary}[{\cite[Theorem 1.4]{HLOY2002}}]  \label{cor:unpolarised-case}
Let $T\oplus K$ be indefinite. Then there is a bijection
 $G_T\backslash\kq(T,K,0) \bijection  G_T\times\OO(K)\backslash\OO(D_K)$,
where $G_T$ acts on $D_K$ via $G_T\embed\OO(T)\to\OO(D_T)\isom\OO(D_K)$.
\end{corollary}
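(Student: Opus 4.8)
The plan is to reduce the statement to a single equivariant bijection $\kq(T,K,0)\bijection\OO(D_K)$ and then pass to orbits. First I would record that, since $T\oplus K$ is indefinite and $q=0$, any overlattice $L$ in the prescribed genus is an indefinite, even, unimodular lattice; by the classification of such lattices it is unique in its genus, so Assumption~\ref{assumption} is automatically satisfied and Lemma~\ref{lem:G-eq1} applies with $G_L=\OO(L)$. In particular the left-hand side, the set of $G_T$-equivalence classes, is by Lemma~\ref{lem:G-eq1} exactly the orbit set $(G_T\times\OO(K))\backslash\kq(T,K,0)$, where $g\in G_T$ acts through $D_g\oplus\id_{D_K}$ and $h\in\OO(K)$ through $\id_{D_T}\oplus D_h$. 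It therefore suffices to exhibit a bijection $\kq(T,K,0)\bijection\OO(D_K)$ intertwining this $G_T\times\OO(K)$-action with the two-sided multiplication action on $\OO(D_K)$.

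For the core bijection I would unwind the hypothesis $q=0$. An element $H\in\kq(T,K,0)$ is an isotropic subgroup of $D_{T\oplus K}=D_T\oplus D_K$ with $p_T|_H$ and $p_K|_H$ injective and with $H\orth/H\cong 0$, i.e.\ $H=H\orth$, which forces $|H|^2=|D_T|\,|D_K|$. Writing $H$ as the graph of the injective homomorphism $\gamma_H\colon\Gamma\embed D_K$ with $\Gamma=p_T(H)\subseteq D_T$, the inequalities $|\Gamma|\le|D_T|$ and $|\gamma_H(\Gamma)|=|\Gamma|\le|D_K|$ combine with $|\Gamma|^2=|H|^2=|D_T|\,|D_K|$ to give $|\Gamma|=|D_T|=|D_K|$. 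Hence $\Gamma=D_T$ and $\gamma_H\colon D_T\isom D_K$ is an isomorphism, which by the isotropy relation $q_{T\oplus K}|_H=0$ is an anti-isometry, $q_K\gamma_H=-q_T$. Fixing once and for all the reference anti-isometry $\gamma_0$ that defines the identification $\OO(D_T)\isom\OO(D_K)$ in the statement, I would send $H\mapsto\gamma_H\gamma_0\inv$; a one-line check that $\gamma_H\gamma_0\inv$ preserves $q_K$ shows this lands in $\OO(D_K)$, and it is bijective since every anti-isometry $D_T\isom D_K$ has the form $\delta\gamma_0$ for a unique $\delta\in\OO(D_K)$.

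Finally I would verify equivariance and conclude. The $G_T$-action $H\mapsto(D_g\oplus\id)(H)$ replaces $\gamma_H$ by $\gamma_H D_g\inv$, hence multiplies $\delta=\gamma_H\gamma_0\inv$ on the right by $\bar g\inv$, where $\bar g:=\gamma_0 D_g\gamma_0\inv$ is exactly the image of $g$ under $G_T\embed\OO(T)\to\OO(D_T)\isom\OO(D_K)$; the $\OO(K)$-action $H\mapsto(\id\oplus D_h)(H)$ replaces $\gamma_H$ by $D_h\gamma_H$, hence multiplies $\delta$ on the left by $D_h\in\OO(D_K)$. Thus the bijection carries the $G_T\times\OO(K)$-action into the two-sided multiplication $\delta\mapsto D_h\,\delta\,\bar g\inv$, whose orbit set is precisely $G_T\times\OO(K)\backslash\OO(D_K)$ in the stated convention; passing to orbits yields the asserted bijection. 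The main obstacle is the middle step: showing that at $q=0$ the a priori partial graph $\gamma_H$ is forced to be a full anti-isometry $D_T\isom D_K$ — the order count above is the crux and is where unimodularity of $L$ genuinely enters — together with the careful bookkeeping of pre- versus post-composition and the placement of inverses required to land on exactly the two-sided action rather than its transpose.
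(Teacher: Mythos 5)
Your proof is correct, and its overall skeleton agrees with the paper's: interpret the left-hand side as $G_T$-equivalence classes, use Lemma~\ref{lem:G-eq1} (equivalently Lemma~\ref{lem:G-eq2} with $G_L=\OO(L)$, which is available because an even, indefinite, unimodular lattice is unique in its genus, so Assumption~\ref{assumption} holds) to rewrite this as the orbit set $(G_T\times\OO(K))\backslash\kq(T,K,0)$, identify $\kq(T,K,0)$ with the set of anti-isometries $D_T\isom D_K$, hence with $\OO(D_K)$ after fixing $\gamma_0$, and pass to orbits. Where you genuinely differ is in the proof of the crux, namely that each $H\in\kq(T,K,0)$ is the graph of a \emph{full} anti-isometry. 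The paper works upstairs in the unimodular overlattice: $L=L^\vee\surject T^\vee$ has kernel $K$, hence $L\cong T^\vee\oplus K$ and $D_T\cong L/(T\oplus K)\cong D_K$ with $q_T\cong-q_K$, so the classifying subgroup projects isomorphically onto both $D_T$ and $D_K$. You work downstairs in the finite quadratic form: $q=0$ means $H=H\orth$, the counting identity $|H|\cdot|H\orth|=|D_T|\cdot|D_K|$ (nondegeneracy of the discriminant bilinear form) gives $|H|^2=|D_T|\cdot|D_K|$, and injectivity of $p_T|_H$ and $p_K|_H$ then forces $p_T(H)=D_T$ and $p_K(H)=D_K$. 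Both arguments pinpoint where unimodularity enters; yours is self-contained within the discriminant-form formalism, while the paper's produces the anti-isometry $q_T\cong-q_K$ as an explicit byproduct. You are also more explicit than the paper on the equivariance bookkeeping: the paper compresses the two-sided action into the citation of Lemma~\ref{lem:G-eq2}, whereas you verify $\delta\mapsto D_h\,\delta\,\bar g\inv$ by hand, which is exactly what is needed for the orbit set to come out as $G_T\times\OO(K)\backslash\OO(D_K)$ in the paper's conventions. One shared (harmless) caveat: like the paper's ``if there exists such an embedding'', your fixed $\gamma_0$ presupposes that some anti-isometry $D_T\isom D_K$ exists; this is already built into the identification $\OO(D_T)\isom\OO(D_K)$ appearing in the statement.
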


\begin{proof} \ 
We have $D_T\cong D_K$ by the following standard argument: the map
$L=L^\vee\to T^\vee$ is surjective with kernel $K$, hence $L\cong T^\vee\oplus K$,
and $T^\vee/T\cong(T^\vee\oplus K)/(T\oplus K)\cong L/(T\oplus K)$;
similarly for $K^\vee/K$ by symmetry. Also, the forms on $D_T$ and $D_K$
coincide up to sign: $q_T\cong-q_K$. This also shows that subgroups $H$
of Lemmas~\ref{lem:G-eq1} and \ref{lem:G-eq2} are graphs of isomorphisms.

Therefore, primitive embeddings $T\embed L$ are determined by anti-isometries
$\gamma\colon D_T\isom D_K$. If there exists such an embedding (and hence such
an anti-isometry), this set is bijective to $\OO(D_T)$. We deduce the claim
from Lemma \ref{lem:G-eq2}.
\qed \end{proof}

\begin{remark} \label{rem:unimodular-overlattice}
Note that in the unimodular case ($q=0$), the prescription of $T$ and of the
genus of the overlattice (i.e.\ just the signature in this case) already
settles the genus of $K$ by $q_K=-q_T$ and the signature of $K$ is
obviously fixed. This statement is wrong in the non-unimodular case:
It can happen that a sublattice has two embeddings with orthogonal
complements of different genus as in the following example.
\end{remark}

\begin{example}
Let $T:=\langle2\rangle$ with generator $t$ and $L:= U\oplus\langle2\rangle$ with
generators $e,f\in U$, $x\in\langle2\rangle$. Consider the embeddings
$\iota_1,\iota_2\colon T\embed L$ given by $\iota_1(t)=e+f$ and $\iota_2(t)=x$.
Then, bases for the orthogonal complements are $\{e-f,x\}\subset\iota_1(T)\orth$
and $\{e,f\}\subset\iota_2(T)\orth$. Hence $\text{disc}(\iota_1(T)\orth)=4$ but
$\text{disc}(\iota_1(T)\orth)=1$.
\end{example}



\section{$\Kthree$ surfaces} \label{sec:K3}

\noindent
In this text, a \emph{$\Kthree$ surface} will mean a smooth compact complex surface which is
simply connected and carries a no-where vanishing 2-form. The latter two conditions
are equivalent to zero irregularity ($H^1(X,\ko_X)=0$) and trivial canonical bundle
($\Omega_X^2\cong\ko_X$). See \cite[VIII]{bpvdv} or \cite{beauville-k3} for details.


We denote the \emph{Picard rank} of a $\Kthree$ surface $X$ by $\rho_X$. It is
the number of independent divisor classes or, equivalently, of independent
line bundles on $X$. If $X$ is projective, then $\rho_X\geq1$ but not vice versa.
The cohomology groups listed below carry lattice structures coming from the
cup product on the second cohomology:
\begin{align*}
 H^2_X &= H^2(X,\IZ)  && \text{ full second cohomology}, && \sign(H^2_X)=(3,19) \\
 T_X &                && \text{ transcendental lattice}, && \sign(T_X)=(2,20-\rho_X)\\
 NS_X &               && \text{ N\'eron-Severi lattice}, && \sign(NS_X)=(1,\rho_X-1)
\end{align*}
where the signatures in the second and third cases are valid only for
$X$ projective. Following usage in algebraic geometry, we will often write
$\alpha.\beta=(\alpha,\beta)$ for the pairing. Likewise, we will use the
familiar shorthand $L.M$ for the pairing of the first Chern classes $c_1(L).c_1(M)$
of two line bundles $L$ and $M$.

By Poincar\'e duality, $H^2_X$ is a unimodular lattice; it follows from Wu's formula
that the pairing is even.
Indefinite, even, unimodular lattices are uniquely determined by their
signature; we get that $H^2_X$ is isomorphic to the so-called \emph{$\Kthree$ lattice}
made up from three copies of the hyperbolic plane $U$ and two copies of the negative
$E_8$ lattice:
\begin{align*}
L_\kd &= 3U\oplus2E_8(-1) .
\end{align*}

The N\'eron-Severi and transcendental lattices are mutually orthogonal primitive
sublattices of $H^2_X$. In particular, $H^2_X$ is an overlattice of $T_X\oplus NS_X$.

We denote by $\omega_X$ the canonical form on $X$. It has type $(2,0)$ and is
unique up to scalars, since $H^0(X,\Omega_X^2)=\IC$ for a $\Kthree$ surface. By abuse
of notation, we also write $\omega_X$ for its cohomology class, so that
$\omega_X\in T_X\otimes\IC$. In fact, $T_X$ is the smallest primitive sublattice
of $H^2_X$ whose complexification contains $\omega_X$.

As $X$ is a complex K\"ahler manifold, the second cohomology $H^2_X$
comes equipped with a pure Hodge structure of weight 2:
 $H^2_X\otimes\IC = H^{2,0}(X) \oplus H^{1,1}(X) \oplus H^{0,2}(X)$.
Note that $H^{1,1}(X) = (\IC\omega_X+\IC\overline\omega_X)\orth$.
The transcendental lattice $T_X$ is a Hodge substructure with unchanged
$(2,0)$ and $(0,2)$ components.

A \emph{Hodge isometry} of $H^2_X$ (or $T_X$) is an isometry that maps
each Hodge summand to itself. As the $(2,0)$-component is one-dimensional,
Hodge isometries are just isometries $\varphi\colon H^2_X\isom H^2_X$ with
$\varphi_\IC(\omega_X)=c\omega_X$ for some $c\in\IC^*$. (Analogous for
Hodge isometries of $T_X$.) If $L$ is a lattice with Hodge structure, we
denote the group of Hodge isometries by $\OO_\hodge(L)$.

The following two Torelli theorems are basic for all subsequent work. They say
that essentially everything about a $\Kthree$ surface is encoded in its second
cohomology group, considered as a lattice with Hodge structure --- for both
the classical and derived point of view. (We repeat Orlov's result about
equivalent surfaces up to derived equivalence.)

\begin{classicalTorelli}
Two $\Kthree$ surfaces $X$ and $Y$ are isomorphic if and only if there is a Hodge
isometry between their second cohomology lattices $H^2_X$ and $H^2_Y$.
\end{classicalTorelli}

\begin{derivedTorelli}[Orlov]
Two projective $\Kthree$ surfaces $X$ and $Y$ are derived equivalent if and only if
there is a Hodge isometry between the transcendental lattices $T_X$ and $T_Y$.
\end{derivedTorelli}


See \cite{beauville-k3} or \cite[\S VIII]{bpvdv} for the classical case (the
latter reference gives an account of the lengthy history of this result), and
\cite{orlov-k3} or \cite[\S10.2]{huybrechts} for the derived version.

%

A \emph{marking} of $X$ is the choice of an isometry $\lambda_X\colon H^2_X\isom L_\kd$.
The period domain for $\Kthree$ surfaces is the following open subset of the projectivised $\Kthree$ lattice:
\[ \Omega_{L_\kd} = \{ \omega\in\IP(L_\kd\otimes\IC) \mid \omega.\omega=0, \omega.\overline\omega>0 \} . \]
Since $L_\kd$ has signature $(3,k)$ with $k>2$, this set is connected.
By the surjectivity of the period map \cite[VIII.14]{bpvdv}, each point of $\Omega_{L_\kd}$
is obtained from a marked $\Kthree$ surface. Forgetting the choice of marking by dividing out the
isometries of the $\Kthree$ lattice, we obtain a space $\kf=\OO(L_\kd)\backslash\Omega_{L_\kd}$
parametrising all (unmarked) $\Kthree$ surfaces. As is well known, $\kf$ is a 20-dimensional,
non-Hausdorff space. In particular, it is not a moduli space in the algebro-geometric sense.

Denote by $\kd_\FM$ the set of all $\Kthree$ surfaces up to derived equivalence --- two
$\Kthree$ surfaces get identified if and only if they are Fourier-Mukai partners, i.e.\ if
and only if their transcendental lattices are Hodge-isometric. Its elements are the
sets $\FM(X)$ of Fourier-Mukai partners of $\Kthree$ surfaces $X$.
One cannot expect this set to have a good analytic structure: the fibres of the map
$\kf\to\kd_\FM$ can become arbitrarily large (see \cite{oguiso-primes}).
On the other hand, any $\Kthree$ surface has only finitely many FM partners
(\cite{bridgeland-maciocia}), so that the fibres are finite at least.

Since the transcendental lattices determine D-equivalence by Orlov's derived
Torelli theorem, the Fourier-Mukai partners of a $\Kthree$ surface $X$ are given by
embeddings $T_X\subseteq L_\kd$, modulo automorphisms of $T_X$. This can be
turned into a precise count:

\begin{thm-non}[Hosono, Lian, Oguiso, Yau {\cite[Theorem 2.3]{HLOY2002}}]
The set of Fourier-Mukai partners of a $\Kthree$ surface $X$ has the following partition
\[ \FM(X) = \coprod_{S\in\kg(NS_X)} \OO_\hodge(T_X) \times \OO(S) \backslash \OO(D_S) \]
with $\OO(S)$ and $\OO_\hodge(T_X)$ acting on $\OO(D_S)$ as in Corollary~\ref{cor:unpolarised-case}
above.
\end{thm-non}


The special case of a generic projective $\Kthree$ surface, $\rk(NS_X)=1$, was treated
before, leading to a remarkable formula reminiscent of classical genus theory for
quadratic number fields (and proved along these lines):

\begin{thm-non}[Oguiso {\cite{oguiso-primes}}]
Let $X$ be a projective $\Kthree$ surface with $\mathrm{Pic}(X)$ generated by an ample line bundle
of self-intersection $2d$. Then $X$ has $2^{p(d)-1}$ FM partners, where $p(d)$ is the number
of distinct prime factors of $d$, and $p(1)=1$.
\end{thm-non}

Oguiso's theorem can also be interpreted as a result about polarised $\Kthree$ surfaces,
which we turn to next. In particular, the number $2^{p(d)-1}$ is the order of 
$\OO(D_{L_{2d}})/\langle\pm1\rangle$, where$L_{2d}$ is the replacement of the lattice
$L_\kd$ in the polarised case.


\section{Polarised $\Kthree$ surfaces}\label{sec:polarisedK3surfaces}

\noindent
A \emph{semi-polarised $\Kthree$ surface} of degree $d>0$ is a pair $(X,h_X)$
of a $\Kthree$ surface $X$ together with a class $h_X\in NS_X$ of a nef divisor
with $h_X^2=2d>0$. A nef divisor of positive degree is also called
\emph{pseudo-ample}. We recall that an effective divisor is nef if and only if
it intersects all $-2$-curves nonnegatively \cite[\S VIII.3]{bpvdv}.
We will also assume that $h_X$ is primitive, i.e.\ not a non-trivial integer 
multiple of another class.

We speak of a \emph{polarised $\Kthree$ surface} $(X,h_X)$ if $h_X$ is the class
of an ample divisor. However, we call $h_X$ the \emph{polarisation}, even
if it is just nef and not necessarily ample. For details, see \cite[\S VIII.22]{bpvdv}.
The relevant geometric lattice is the complement of the polarisation
\begin{align*}
 H_X &= (h_X)\orth_{H^2_X}                         && \text{non-unimodular of signature }(2,19).
\intertext{%
which inherits lattice and Hodge structures from $H^2_X$. \newline
On the side of abstract lattices, recall that $L_\kd=3U\oplus2E_8(-1)$;
we denote the three orthogonal copies of $U$ in $L_\kd$ by $U^{(1)}$, $U^{(2)}$, and $U^{(3)}$.
Basis vectors $e_i$, $f_i$ of $U^{(i)}$, defined by $e_i^2=f_i^2=0$ and $e_i.f_i=1$, always refer
to such a choice. For $h\in L_\kd$ with $h^2>0$, set
}
L_h    &= h\orth_{L_\kd}                            && \text{non-unimodular of signature } (2,19), \\
L_{2d} &= 2U\oplus\langle-2d\rangle\oplus2E_8(-1)  && \text{the special case } h=e_3+df_3.
\end{align*}

Since all primitive vectors of fixed length appear in a single $\OO(L_\kd)$-orbit
by Eichler's criterion, we can assume  $h=e_3+df_3$. Note that $H_X\cong L_{2d}$ as
lattices. Obviously, $D_{L_{2d}}$ is the cyclic group of order $2d$. The non-unimodular
summand $\langle-2d\rangle$ of $L_{2d}$ is generated by $e_3-df_3$; thus $D_{L_{2d}}$ is
generated by the integer-valued functional $\frac{1}{2d}(e_3-df_3,\cdot)$. The quadratic
form $D_{L_{2d}}\to\IQ/2\IZ$ is then given by mapping this generator to the class of
$\frac{-2d}{4d^2}=\frac{-1}{2d}$.

There are two relevant groups in this situation: the full isometry group $\OO(L_{2d})$
and the subgroup $\tilde\OO(L_{2d})$ of \emph{stable} isometries which by definition
act trivially on the discriminant $D_{L_{2d}}$. The next lemma gives another description
of stable isometries.

\begin{lemma}
The stable isometry group coincides with the group of $L_\kd$-isometries stabilising
$h$, i.e.\ $\tilde\OO(L_{2d})=\{g\in\OO(L_\kd)\mid g(h)=h\}$.
\end{lemma}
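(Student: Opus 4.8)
The plan is to realise $L_\kd$ as an overlattice of the orthogonal direct sum $\IZ h\oplus L_{2d}$, where $\IZ h\cong\langle 2d\rangle$ (note $h^2=2d$) and $L_{2d}=h\orth_{L_\kd}$ are both primitive sublattices of complementary rank, and then to study the restriction homomorphism $g\mapsto g|_{L_{2d}}$ from $\{g\in\OO(L_\kd)\mid g(h)=h\}$ to $\OO(L_{2d})$. This map is well defined, since any $g$ fixing $h$ preserves $h\orth=L_{2d}$. I would prove the lemma by showing this restriction is injective with image exactly $\tilde\OO(L_{2d})$, so that it identifies the two groups in question. Injectivity is immediate: over $\IQ$ one has $L_\kd\otimes\IQ=\IQ h\oplus(L_{2d}\otimes\IQ)$, so an isometry fixing $h$ is completely determined by its restriction to $L_{2d}$.

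The heart of the argument is the gluing data of the overlattice. Since $L_\kd$ is unimodular, the computation in the proof of Corollary~\ref{cor:unpolarised-case} applies with $T=\IZ h$ and $K=L_{2d}$: the two natural projections induce identifications
\[
D_{\IZ h}\;\cong\;L_\kd/(\IZ h\oplus L_{2d})\;\cong\;D_{L_{2d}},
\]
both isomorphic to $\IZ/2d\IZ$, and the classifying subgroup $H\subset D_{\IZ h}\oplus D_{L_{2d}}$ of Lemma~\ref{lem:nikulinI} is therefore the graph of an anti-isometry. To see that the image of the restriction lies in $\tilde\OO(L_{2d})$, take $g$ with $g(h)=h$. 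Then $g$ preserves both orthogonal summands together with $L_\kd$, hence acts on the quotient $L_\kd/(\IZ h\oplus L_{2d})$. Because the projection to $(\IZ h)^\vee$ commutes with $g$ and $g$ fixes $h$, the induced action on $D_{\IZ h}$ — and thus on the quotient — is trivial; transporting this through the $g$-equivariant identification with $D_{L_{2d}}$ yields $D_{g|_{L_{2d}}}=\id$, i.e.\ $g|_{L_{2d}}\in\tilde\OO(L_{2d})$.

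For surjectivity I would run the construction in reverse using Remark~\ref{rem:isometry-extension}. Given $\sigma\in\tilde\OO(L_{2d})$, the isometry $\id_{\IZ h}\oplus\sigma$ of $\IZ h\oplus L_{2d}$ acts trivially on $D_{\IZ h}\oplus D_{L_{2d}}$ (since $D_\sigma=\id$), so it is a \emph{stable} isometry of the overlattice $\IZ h\oplus L_{2d}\embed L_\kd$; by Remark~\ref{rem:isometry-extension} it extends to some $g\in\OO(L_\kd)$, and this extension fixes $h$ and restricts to $\sigma$ on $L_{2d}$. This shows the restriction surjects onto $\tilde\OO(L_{2d})$ and completes the identification. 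The main obstacle is the inclusion of the image in $\tilde\OO(L_{2d})$: it is precisely here that unimodularity of $L_\kd$ enters, guaranteeing that the quotient identifications above exist and are $g$-equivariant. The reverse inclusion, by contrast, follows cleanly from the general extension statement for stable isometries of overlattices.
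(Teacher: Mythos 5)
Your proof is correct and follows essentially the same route as the paper: both directions rest on the identification $D_{\IZ h}\cong D_{L_{2d}}$ coming from unimodularity of $L_\kd$ (via Corollary~\ref{cor:unpolarised-case}) to show that fixing $h$ forces triviality on $D_{L_{2d}}$, and on Remark~\ref{rem:isometry-extension} to extend a stable isometry $\id_{\IZ h}\oplus\sigma$ of the overlattice $\IZ h\oplus L_{2d}\subset L_\kd$ to all of $L_\kd$. Your added remarks on injectivity of the restriction map and $g$-equivariance of the gluing data only make explicit what the paper leaves implicit.
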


\begin{proof} \ 
Given $g\in\OO(L_\kd)$ with $g(h)=h$, we make use of the fact that the discriminant 
groups of $h\orth=L_{2d}$ and $\langle2d\rangle$ (the latter generated by $h$) are
isomorphic and their quadratic forms differ by a sign. This is true because these
are complementary lattices in the unimodular $L_\Kthree$; see the proof of
Corollary~\ref{cor:unpolarised-case}. The induced maps on discriminants, 
 $D_{g,h\orth}\colon h\orth\isom h\orth$ and 
 $D_{g,h}\colon\langle2d\rangle\isom\langle2d\rangle$,
are the same under the above identification. Since $D_{g,h}$ is the identity by 
assumption, $D_{g,h\orth}$ is, too. Hence, $g|_{L_{2d}}$ is stable.

On the other hand, any $f\in\OO(L_{2d})$ allows defining an isometry $\tilde f$
of the lattice $L_{2d}\oplus\IZ h$, by mapping $h$ to itself. Note that
$L_{2d}\oplus\IZ h\subset L_\kd$ is an overlattice. If $f$ is a stable isometry,
i.e.\ $D_f=\id$, then $\tilde f$ extends to an isometry of $L_\kd$. (This
reasoning has already been alluded to in Remark~\ref{rem:isometry-extension}.)
%
%
\qed \end{proof}

An isomorphism of semi-polarised $\Kthree$ surfaces is an isomorphism of the surfaces
respecting the polarisations. Here we recall two Torelli theorems
which are essential for the construction of moduli spaces of $\Kthree$ surfaces.

\begin{StrongPolarisedTorelli}
Given two properly polarised $\Kthree$ surfaces $(X,h_X)$ and $(Y,h_Y)$, i.e.\ $h_X$ and
$h_Y$ are ample classes, and a Hodge isometry $\varphi\colon H^2_X\isom H^2_Y$ with
$\varphi(h_X)=h_Y$, there is an isomorphism $f\colon Y\isom X$ such that $\varphi=f^*$.
\end{StrongPolarisedTorelli}

This result only holds for polarised $\Kthree$ surfaces. For semi-polarised $\Kthree$ surfaces we have
a different result, namely
\begin{SemiPolarisedTorelli} $\quad$
Two semi-polarised $\Kthree$ surfaces $(X,h_X)$ and $(Y,h_Y)$ are isomorphic if and only
if there is a Hodge isometry $\varphi\colon H^2_X\isom H^2_Y$ with $\varphi(h_X)=h_Y$.
\end{SemiPolarisedTorelli}
\begin{proof} \  Let $\varphi\colon H^2_X\isom H^2_Y$ be a Hodge isometry with $\varphi(h_X)=h_Y$.
Since $h_X$ is not ample we cannot immediately invoke the strong Torelli theorem. Following
\cite[p.151]{beauville-k3}, let $\Gamma$ be the subgroup of the Weyl group of $Y$ generated
by those roots of $H^2(Y,\mathbb Z)$ which are orthogonal to $h_Y$. Then $\Gamma$ acts
transitively on the chambers of the positive cone. Hence we can find an element $w\in\Gamma$
such that $w(h_Y)=h_Y$ and $w\circ\varphi$ maps the ample cone of $X$ to the ample cone of $Y$.
By the strong Torelli theorem  $w\circ\varphi$ is now induced by an isomorphism
$f\colon Y \isom X$. This gives the claim.
\qed \end{proof}


The counterpart of the unpolarised period domain is the open subset
\[ \Omega_{L_{2d}}^\pm = \{ \omega\in\IP(L_{2d}\otimes\IC)=\IP^{20} \mid
                          (\omega,\omega)=0, (\omega,\overline\omega)>0 \}
                    =  \Omega_{L_\kd}\cap h^\perp \]
where we abuse notation to also write $h^\perp$ for the projectivised hyperplane.
Obviously, both $\OO(L_{2d})$ and its subgroup $\tilde\OO(L_{2d})$ act on
$\Omega_{L_{2d}}^\pm$. Since the signature of $L_{2d}$ is $(2,19)$, the action is
properly discontinuous.

Furthermore, signature $(2,19)$ also implies that $\Omega_{L_{2d}}^\pm$ has two
connected components. These are interchanged by the (stable) involution induced by
 $\id_{U^{(1)}}\oplus(-\id_{U^{(2)}})\oplus\id_{U^{(3)}\oplus2E_8(-1)}$. Denote by
$\Omega_{L_{2d}}^+$ one connected component; this is a type IV domain. Also, let
$\OO_{L_{2d}}^+$ and $\tilde\OO_{L_{2d}}^+$ be the subgroups of the (stable) isometry
group of $L_{2d}$ fixing the component. They are both arithmetic groups, as they have
finite index in $\OO(L_{2d})$.

Next, let $\Delta\subset L_{2d}$ be the subset of all $(-2)$-classes, and for $\delta\in\Delta$
denote by $\delta\orth\subset L_{2d}\otimes\IC$ the associated hyperplane (`wall'). In analogy
to the unpolarised case we define a parameter space as the quotient by the group action ---
however, there are certain differences to be explained below: let
\begin{align*}
 \kf_{2d}      &=  \tilde\OO(L_{2d}) \backslash \Omega_{L_{2d}}^\pm . \\
\intertext{This space has an analytic structure as the quotient of a type IV domain by a group
           acting properly discontinuously. Furthermore, $\kf_{2d}$ is actually quasi-projective
           by Baily-Borel \cite{bb}. Note that the group actions preserve the collection of walls
           $\delta\orth$, which by abuse of notation are given the same symbol in the quotient.
           Hence, the group action also preserves the complement}
 \kf_{2d}^\circ &= \kf_{2d} \setminus \bigcup_{\delta\in\Delta} \delta\orth .
\end{align*}

The condition on $\kf_{2d}^\circ$ means that $-2$-classes orthogonal to the polarisation are
transcendental. In other words, the polarisation is ample, as it is nef and non-zero on all
$-2$-curves.

The subspace $\kf_{2d}^\circ$ is the moduli space of pairs $(X,h_X)$ consisting of
(isomorphism classes of) a $\Kthree$ surface $X$ and the class $h_X$ of an ample, primitive
line bundle with $h_X^2=2d$: given such a pair, we choose a \emph{marking}, i.e.\ an isometry
$\lambda_X\colon H_X^2\isom L_{\kd}$ such that $\lambda_X(h_X)=h$. This induces
$\lambda_X|_{H_X}\colon H_X\isom L_{2d}$ and gives the period point
$\lambda_X(\omega_X)\in\Omega_{2d}^\pm$. Since $h_X$ is ample, the period point avoids the walls.

Conversely, given an $\tilde\OO(L_{2d})$-orbit of a point $[\omega]\in\Omega_{2d}^\pm$ not
on any wall, we get a pair $(X,h_X)$ by considering $[\omega]$ as period point for the full $\Kthree$
lattice: this uses the surjectivity of the period map. Now our assumptions on $\omega$ imply
$h_X^2=2d$ and that $h_X$ is ample as $\omega$ avoids the walls. Then, the strong Torelli theorem
says that both the $\Kthree$ surface $X$ and the polarisation $h_X$ are unique (up to isomorphism).

Finally, using again the surjectivity of the period map, one can find for every element
$[\omega]\in\kf_{2d}\setminus\kf_{2d}^\circ$ a semi-polarised $\Kthree$ surface $(X,h_X)$
and a marking $\varphi\colon H^2(X,\mathbb X) \to  L_\kd $ with $\varphi(h_X)=h$,
$\varphi([\omega_X])=\omega$. The fact that the points not contained in $\kf_{2d}^\circ$
correspond to isomorphism classes of semi-polarised $\Kthree$ surfaces of degree $2d$ now
follows from the Torelli theorem for semi-polarised $\Kthree$ surfaces.



\begin{example}
For $d=1$, the smallest example of a proper semi-polarisation (i.e.\ nef, not ample)
occurs for a generic elliptic $\Kthree$ surface $X$ with section. Its N\'eron-Severi lattice will
be generated by the section $s$ and a fibre $f$. The intersection form on $NS(X)$ is
$\sqmat{-2}{1}{1}{0}$ and we set $D:=s+2f$. This effective divisor is primitive and nef
as $D.s=0$, $D.f=1$, $D^2=2$.
\end{example}

We remark that the lattice $L_{2d}$ is more difficult to work with than $L_\kd$
as it is not unimodular anymore. On the other hand, the moduli space $\kf_{2d}$
of $2d$-polarised $\Kthree$ surfaces is a quasi-projective variety which is a huge
improvement over $\kf=\OO(L_{\kd})\backslash\Omega_{L_{\kd}}$.

In the polarised case, another natural quotient appears, taking the full
isometry group of the lattice $L_{2d}$:
\[ \hat\kf_{2d} = \OO(L_{2d}) \backslash \Omega_{2d}^\pm  . \]
(The unpolarised setting has $D_{L_\kd}=0$ and hence there is only one natural
group to quotient by.)

There is an immediate quotient map $\pi\colon\kf_{2d}\to\hat\kf_{2d}$. It has finite
fibres and was investigated by Stellari:

\begin{lemma}[{\cite[Lemma~2.3]{stellari}}] The degree of $\pi$ is $\deg(\pi)=2^{p(d)-1}$
where $p(d)$ is the number of distinct primes dividing $d$.
\end{lemma}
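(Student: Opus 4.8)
The plan is to read $\pi$ as the quotient of $\kf_{2d}$ by the residual action of the finite group $\OO(L_{2d})/\tilde\OO(L_{2d})$ and to evaluate its degree as the cardinality of a generic fibre. Since $\tilde\OO(L_{2d})=\ker(\OO(L_{2d})\to\OO(D_{L_{2d}}))$ is normal in $\OO(L_{2d})$, the finite quotient group $\OO(L_{2d})/\tilde\OO(L_{2d})$ acts on $\kf_{2d}$ with quotient exactly $\hat\kf_{2d}$, and it embeds into $\OO(D_{L_{2d}})$. First I would record that $-\id_{L_{2d}}$ acts trivially on $\Omega_{2d}^\pm\subset\IP(L_{2d}\otimes\IC)$, being a scalar on projective space, so that $-\id$ lies in the kernel of the action.

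Because $\pi$ is finite and surjective in characteristic $0$, its degree is the number of $\tilde\OO(L_{2d})$-orbits contained in a single generic $\OO(L_{2d})$-orbit, i.e.\ the double-coset count $\tilde\OO(L_{2d})\backslash\OO(L_{2d})/\mathrm{Stab}(\omega)$ for a generic period point $\omega$. The key input is that this stabiliser equals $\{\pm\id\}$: for non-central $g$ the locus $\{[\omega]\mid g[\omega]=[\omega]\}$ is the union of projectivised eigenspaces of $g$, a proper analytic subset, so the generic $\omega$ avoids all of them; equivalently, by the Torelli theorem a very general polarised $\Kthree$ surface of degree $2d$ has N\'eron-Severi lattice $\langle2d\rangle$ and admits only $\pm\id$ as Hodge isometries fixing $h$. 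Using normality of $\tilde\OO(L_{2d})$, the double coset collapses to $(\OO(L_{2d})/\tilde\OO(L_{2d}))/\langle\overline{-\id}\rangle$, whence $\deg(\pi)=[\OO(L_{2d}):\tilde\OO(L_{2d})\cdot\{\pm\id\}]$.

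Next I would establish that $\OO(L_{2d})\to\OO(D_{L_{2d}})$ is surjective, so that $\OO(L_{2d})/\tilde\OO(L_{2d})\cong\OO(D_{L_{2d}})$. This follows from Nikulin's criterion for lifting discriminant isometries: $L_{2d}$ is even and indefinite of signature $(2,19)$ with $\rk(L_{2d})=21\geq l(D_{L_{2d}})+2=3$ (indeed $L_{2d}$ contains $2U$), so every isometry of $D_{L_{2d}}$ extends to $L_{2d}$. Combined with the previous paragraph, $\deg(\pi)=|\OO(D_{L_{2d}})/\langle\pm1\rangle|$, where $\pm1$ denotes multiplication by $\pm1$ on the cyclic group $D_{L_{2d}}\cong\IZ/2d\IZ$, the image of $\pm\id$.

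It remains to compute this order. Since $D_{L_{2d}}\cong\IZ/2d\IZ$ carries the form sending a generator to $-1/(2d)$, an isometry is multiplication by some $a$ with $a^2\equiv1\pmod{4d}$; counting such $a$ modulo $2d$ is the same as counting square roots of $1$ modulo $4d$ and dividing by $2$, which by the Chinese Remainder Theorem yields $2^{p(d)}$ for $d>1$ and $1$ for $d=1$. Dividing by the order of $\langle\pm1\rangle$ (which is $2$ precisely when $d>1$, since $-1\equiv1\pmod{2d}$ only for $d=1$) gives $2^{p(d)-1}$ in every case; this is exactly the order of $\OO(D_{L_{2d}})/\langle\pm1\rangle$ already identified in connection with Oguiso's theorem. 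The main obstacle is the second paragraph — rigorously pinning the generic stabiliser to $\{\pm\id\}$ so as to reduce the degree to a clean group index; once that is secured, the surjectivity and the square-root count are routine.
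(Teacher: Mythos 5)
Your argument is correct, and its skeleton matches the paper's: both reduce $\deg(\pi)$ to the index $[\OO(L_{2d}):\tilde\OO(L_{2d})]$ divided by a factor $2$ coming from $-\id$, identify that index with $|\OO(D_{L_{2d}})|$ via the surjectivity of $\OO(L_{2d})\to\OO(D_{L_{2d}})$ (Nikulin's Theorem 1.14.2, applicable exactly as you say since $l(D_{L_{2d}})=1\leq\rk(L_{2d})-2$), and then count that group. Where you genuinely diverge is in the two justifications, and both divergences are worth noting. First, the paper disposes of the factor $2$ by asserting that the non-stable isometry $-\id$ ``permutes the two components'' of $\Omega_{L_{2d}}^\pm$; this is not what happens --- the components are interchanged by the stable involution $\id_{U^{(1)}}\oplus(-\id_{U^{(2)}})\oplus\id$, whereas $-\id$, being a scalar, acts trivially on all of $\IP(L_{2d}\otimes\IC)$. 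Your scalar observation is the correct mechanism for the factor $2$, and your generic-stabiliser claim, which the paper leaves entirely implicit, is what actually converts the fibre count into the clean index $[\OO(L_{2d}):\tilde\OO(L_{2d})\cdot\{\pm\id\}]$; to make the eigenspace argument airtight you should add that $\OO(L_{2d})$ is countable and acts properly discontinuously on $\Omega_{L_{2d}}^\pm$, so that the union over $g\neq\pm\id$ of the fixed loci is a locally finite union of proper analytic subsets, whose complement is open and dense. Second, for the order of $\OO(D_{L_{2d}})$ you count residues $a$ with $a^2\equiv1\pmod{4d}$ by the Chinese Remainder Theorem, where the paper decomposes $D_{L_{2d}}\cong\IZ/2d\IZ$ into its $p$-primary parts and notes that only $\pm1$ preserve the discriminant form on each part; these are equivalent elementary computations, both giving $2^{p(d)}$ for $d>1$, and both treatments handle $d=1$ (where $-\id$ is stable and $\OO(D_{L_2})$ is trivial) as a separate degenerate case. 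In short: same architecture, but your version supplies the reduction step rigorously and in doing so corrects the paper's description of the role of $-\id$, at the cost of some length.
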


\begin{proof} \ 
The degree is given by the index $[\OO(L_{2d}):\tilde\OO(L_{2d})]$ up to the action
of the non-stable isometry $-\id$ which permutes the two components.

We use the exact sequence  $0\to\tilde\OO(L_{2d})\to\OO(L_{2d})\to\OO(D_{L_{2d}})\to0$
where the right-hand zero follows from \cite[Theorem~1.14.2]{nikulin}. The index
thus equals the order of the finite group $\OO(D_{L_{2d}})$. Now $D_{L_{2d}}$ is the
cyclic group of order $2d$ and decomposes into the product of various $p$-groups.
Automorphisms of $D_{L_{2d}}$ factorise into automorphisms of the $p$-groups. However,
the only automorphisms of $\IZ/p^l$ respecting the quadratic (discriminant) form
are those induced by $1\mapsto\pm1$. Hence $|\OO(D_{L_{2d}})|=2^{p(d)}$. The degree is
then $2^{p(d)-1}$, taking the non-stable isometry $-\id$ of $L_{2d}$ into account.

In case $d=1$, we have $\tilde\OO(L_{2d})=\OO(L_{2d})$, fitting with $p(1)=1$.
\qed \end{proof}

Points of $\hat\kf_{2d}$ or rather the fibres of $\pi$ have the following property:

\begin{lemma} \label{lem:tau-fiber}
Given semi-polarised $\Kthree$ surfaces $(X,h_X)$ and $(Y,h_Y)$ with
$\pi(X,h_X)=\pi(Y,h_Y)$, there are Hodge isometries $H_X\cong H_Y$ and
$T_X\cong T_Y$. In particular, $X$ and $Y$ are FM partners.
\end{lemma}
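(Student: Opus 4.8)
The plan is to unwind the hypothesis $\pi(X,h_X)=\pi(Y,h_Y)$ into an honest isometry of $L_{2d}$ matching the two periods, to transport it through the markings into a Hodge isometry $H_X\isom H_Y$, and then to recover the transcendental lattices intrinsically inside $H_X$ and $H_Y$ so that the Hodge isometry restricts to them. The final assertion is then immediate from Orlov's derived Torelli theorem.

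First I would fix markings $\lambda_X\colon H^2_X\isom L_\kd$ and $\lambda_Y\colon H^2_Y\isom L_\kd$ with $\lambda_X(h_X)=h=\lambda_Y(h_Y)$, which exist by Eichler's criterion since $h_X,h_Y$ are primitive of the same length $2d$. These restrict to markings $\mu_X:=\lambda_X|_{H_X}\colon H_X\isom L_{2d}$ and $\mu_Y:=\lambda_Y|_{H_Y}\colon H_Y\isom L_{2d}$ and produce period points $p_X:=\mu_{X,\IC}(\omega_X)$, $p_Y:=\mu_{Y,\IC}(\omega_Y)$ in $\Omega_{2d}^\pm$ (note $\omega_X\in T_X\otimes\IC\subset H_X\otimes\IC$ because $h_X\perp T_X$). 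By construction $\mu_X$ is a Hodge isometry onto $L_{2d}$ equipped with the Hodge structure whose $(2,0)$-line is $\IC p_X$, and likewise for $\mu_Y$. The equality $\pi(X,h_X)=\pi(Y,h_Y)$ means precisely that $p_X$ and $p_Y$ lie in the same $\OO(L_{2d})$-orbit on $\Omega_{2d}^\pm$, so there is $g\in\OO(L_{2d})$ with $g_\IC(p_X)\in\IC^\ast\,p_Y$. Being integral, $g_\IC$ commutes with complex conjugation, so $g$ carries the $(2,0)$-, $(1,1)$-, and $(0,2)$-parts of the $p_X$-Hodge structure to those of the $p_Y$-Hodge structure; that is, $g$ is a Hodge isometry between the two structures on $L_{2d}$. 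Consequently $\Phi:=\mu_Y^{-1}\circ g\circ\mu_X\colon H_X\isom H_Y$ is a Hodge isometry, giving the first claimed isometry $H_X\cong H_Y$.

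Next I would recover $T_X$ from the Hodge structure of $H_X$ alone. Since $h_X\in NS_X$ is orthogonal to $T_X$, we have $T_X\subseteq H_X$, and this inclusion is primitive: a sublattice that is primitive in $H_X$ is primitive in $H^2_X$ (primitivity is transitive for nested primitive sublattices, and $H_X$ is primitive in $H^2_X$), and conversely $T_X$, being primitive in $H^2_X$, is primitive in $H_X$. The same transitivity shows that $T_X$ is the smallest primitive sublattice of $H_X$ whose complexification contains $\omega_X$, because it already has this minimality property in $H^2_X$. A Hodge isometry sends the $(2,0)$-line to the $(2,0)$-line and primitive sublattices to primitive sublattices while preserving complexifications, so $\Phi(T_X)=T_Y$ and $\Phi|_{T_X}\colon T_X\isom T_Y$ is a Hodge isometry; this is the second claimed isometry. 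Finally, Hodge-isometric transcendental lattices imply that $X$ and $Y$ are derived equivalent by Orlov's derived Torelli theorem, i.e.\ FM partners.

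The routine parts --- existence of the markings and the various lattice identifications --- are all supplied by earlier results, so the only genuine points to get right are the intrinsic characterisation of $T_X$ inside $H_X$ (hence the transitivity-of-primitivity bookkeeping) and the care needed because the period match produced by $\pi$ holds only up to a scalar and possibly interchanges the two components of $\Omega_{2d}^\pm$. I expect this last harmlessness to be the main thing to argue cleanly, since it is exactly what allows $g$ to be promoted to a genuine Hodge isometry rather than merely a projective one.
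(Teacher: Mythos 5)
Your proof is correct and follows essentially the same route as the paper: fix markings sending $h_X$ and $h_Y$ to $h$, use $\pi(X,h_X)=\pi(Y,h_Y)$ to produce $g\in\OO(L_{2d})$ matching the periods, transport it to a Hodge isometry $H_X\isom H_Y$, and restrict to $T_X\isom T_Y$ via the characterisation of the transcendental lattice as the smallest primitive sublattice whose complexification contains the period, concluding with derived Torelli. The only difference is that you spell out details the paper leaves implicit (the scalar ambiguity in the period identification and the transitivity-of-primitivity argument placing $T_X$ intrinsically inside $H_X$ rather than $H^2_X$), which is a welcome tightening rather than a new method.
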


\begin{proof} \ 
Fix markings $\lambda_X\colon H^2_X\isom L_\kd$ and $\lambda_Y\colon H^2_Y\isom L_\kd$
with $\lambda_X(h_X)=\lambda_Y(h_Y)=h$ and $\lambda_X(\omega_X)=\omega$,
$\lambda_Y(\omega_Y)=\omega'$.
By $\pi(X,h_X)=\pi(Y,h_Y)$, there is some $g\in\OO(L_{2d})$ such that
$g(\lambda_X(\omega_X))=\lambda_Y(\omega_Y)$. In particular, the primitive lattices
generated by $\omega_X$ and $\omega_Y$ (which are the transcendental lattices $T_X$
and $T_Y$) get mapped into each other by $\lambda_Y\inv\circ g\circ\lambda_X$. Thus,
the latter isometry respects the Hodge structures, and induces Hodge isometries
$T_X\isom T_Y$ and $H_X=h_X\orth\isom H_Y=h_Y\orth$.
\qed \end{proof}


\section{Polarisation of FM partners}

\noindent
In this section, we want to consider the relationship between polarisations and FM partners.
A priori these concepts are very different: the condition that two K3 surfaces are
derived equivalent is a property of their transcendental lattices, whereas the existence of
polarisations concerns the N\'eron-Severi group. Indeed, we shall see that there are 
FM partners where one K3 surface carries a polarisation of given degree but the other does 
not. On the opposite side, we shall see in the next Section~\ref{sec:countingformula} that
one can count the number of FM partners among polarised K3 surfaces of a given degree.

Introduce the set
\[ \kd_\FM^{2d} := \Big\{
                   \begin{array}{@{}l@{}l@{}} X & \text{ $\Kthree$ surface admitting a primitive } \\
                                                & \text{ nef line bundle } L \text{ with } L^2=2d
                   \end{array}
                   \Big\} / \sim \]
where $X\sim Y$ if and only if $D^b(X)\cong D^b(Y)$.

We shall first discuss two examples which shed light on the relationship between FM partnership
and existence of polarisations.

\begin{example}
Derived equivalence does not respect the existence of polarisations of a given degree. 
To give an example we use the rank 2 N\'eron-Severi lattices defined by 
$\sqmat{0}{-7}{-7}{-2}$ and $\sqmat{0}{-7}{-7}{10}$ which are related by the rational
base change $\frac{1}{3}\sqmat{1}{0}{-2}{9}$. The former obviously represents $-2$ 
whereas the latter does not. Furthermore, the latter primitively represents 10 via the
vector $(0,1)$ and 6 via the vector $(2,3)$ whereas the former does not. For example, 
if we had $10=-14xy-2y^2$, then $y$ would have to be one of $1, 2, 5, 10$ up to sign 
and neither of these eight cases works.

The orthogonal lattices in $L_\Kthree$ are isomorphic, as follows from Nikulin's criterion.
Denote the common orthogonal complement by $T$. As in the previous example, we choose a 
general vector $\omega\in T_\IC$ with $(\omega,\omega)=0$, $(\omega,\overline\omega)>0$.
We see that $T$ admits primitive embeddings $\iota,\iota'\colon T\embed L_\Kthree$ such
that $\iota(T)^\perp$ does not contain any vectors of square $2d$ whereas $\iota'(T)^\perp$
does, for $d=3$ or $d=5$. Furthermore, $\iota'(T)^\perp$ does not contain any $-2$-classes.
The surface $X'$ corresponding to $\iota'(\omega)\in T_\IC$ can actually be $2d$-polarised 
since $\iota'(\omega)\in L_h^\perp\cong L_{2d}$ and we have $\kf_{2d}^\circ = \kf_{2d}$ by the
absence of $-2$-classes.
On the other hand, the surface $X$ corresponding to $\iota(\omega)$ has no 
$2d$-(semi)polarisations --- there are not even classes in $NS(X)$ of these degrees.

\end{example}

\begin{example} 
Let $d >1$ be an integer, not divisible by $3$ such that $2d$ can be represented primitively by $A_2$ (e.g. $d=7$).
Let $T=2U\oplus E_8(-1) \oplus E_6(-1) \oplus \langle-2d\rangle$. Following an idea
of M.~Sch\"utt we construct two primitive embeddings $\iota,\iota'\colon T\embed L_{2d}$. Both of them are the identity on $2U\oplus E_8(-1)$. On the $E_6(-1)\oplus\langle-2d\rangle$ part of $T$, we use
\[ \begin{array}{rll}
\iota\colon & E_6(-1) \embed E_8(-1), \langle-2d\rangle \isom \langle-2d\rangle & \text{ with } \iota(T)_{L_{2d}}^\perp \cong A_2(-1) \\
\iota'\colon & E_6(-1)\oplus\langle-2d\rangle \embed E_8(-1) & \text{ with } \iota'(T)_{L_{2d}}^\perp \cong \langle-6d\rangle\oplus\langle-2d\rangle.
\end{array} \]

We choose a general point $\omega \in T_\IC$ with $(\omega,\omega)=0$ and $(\omega,\bar{\omega})>0$.
Then the points $\iota(\omega)$ and $\iota'(\omega)$ in $\kf_{2d}$ represent (semi-)polarised $\Kthree$ surfaces
$(X,h_X)$ and $(X',h_{X'})$. In the first case $h_X$ is only semi-polarised, as $A_2(-1)$ contains $(-2)$-vectors,
in other words $\iota(\omega) \in \kf_{2d} \setminus \kf_{2d}^\circ $. In the second case 
$\iota(\omega) \in \kf_{2d}^\circ $ since the orthogonal complement of $h_{X'}$ in $NS(X')$ equals 
$\langle-2d\rangle\oplus\langle-6d\rangle$ which does not contain a $(-2)$-class. This shows that there are examples
of polarised and semi-polarised $\Kthree$ surfaces of the same degree which have the same FM partner. 

Incidentally we notice that $NS(X) \cong NS(X') \cong A_2(-1) \oplus \langle 2d \rangle$. This follows 
from Nikulin's criterion since both lattices have rank $3$ and length $1$ since we have 
assumed that $(3,d)=1$.  



\end{example}

We want to construct a map
\[ \tau\colon \kf_{2d} \to \kd_\FM^{2d} . \]
By Lemma~\ref{lem:tau-fiber}, we have a map $\pi\colon \hat\kf_{2d} \to \kd_\FM^{2d}$;
combining it with $\sigma\colon \kf_{2d} \to \hat\kf_{2d}$, we obtain a commutative
triangle


\[ \xymatrix{
\kf_{2d} \ar[rr]^\pi \ar[rdd]_\tau & &
\hat\kf_{2d} \ar[ddl]^\sigma \\ \\ &
\kd^{2d}_\FM
} \]

By the counting results of Proposition~\ref{FM-fiber}, the fibres are finite. Here we
give a geometric argument for that fact, following Stellari \cite[Lemma 2.5]{stellari-group},
where we pay special attention to the `boundary points' of $\kf_{2d}$.

\begin{proposition}
Given a $2d$-(semi-)polarised $\Kthree$ surface $(X,h_X)$, there are only finitely many
$2d$-(semi-)polarised $\Kthree$ surfaces $(Y,h_Y)$ up to isomorphism with $D^b(X)\cong D^b(Y)$.
\end{proposition}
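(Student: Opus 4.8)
The plan is to factor the required finiteness into two independent statements: finiteness of the underlying Fourier-Mukai partners as (unpolarised) surfaces, and, for each such surface, finiteness of the degree-$2d$ polarisations it carries up to automorphism. Throughout I would use Orlov's derived Torelli theorem to rephrase the condition $D^b(X)\cong D^b(Y)$ as the existence of a Hodge isometry $T_X\isom T_Y$, so that Fourier-Mukai partnership becomes a purely transcendental condition.

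First I would recall that a projective $\Kthree$ surface has only finitely many Fourier-Mukai partners up to isomorphism. This is the theorem of Bridgeland and Maciocia \cite{bridgeland-maciocia}; alternatively it is immediate from the partition in the theorem of Hosono, Lian, Oguiso and Yau \cite{HLOY2002}, since the genus $\kg(NS_X)$ is finite and each piece $\OO_\hodge(T_X)\times\OO(S)\backslash\OO(D_S)$ is a quotient of the finite set $\OO(D_S)$. Hence there are only finitely many isomorphism classes of $\Kthree$ surfaces $Y$ with $D^b(X)\cong D^b(Y)$, irrespective of any polarisation (some of these partners will carry no $2d$-polarisation at all, which only shrinks the count). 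It therefore suffices to fix one such partner $Y$ and to bound the number of isomorphism classes of pairs $(Y,h_Y)$, i.e.\ the number of $\Aut(Y)$-orbits of primitive nef classes $h_Y\in NS_Y$ with $h_Y^2=2d$.

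For the second step I would argue entirely inside the lattice $NS_Y$, which is indefinite of signature $(1,\rho_Y-1)$. The set $\{v\in NS_Y\mid v^2=2d\}$ decomposes into finitely many $\OO(NS_Y)$-orbits, by the classical finiteness of representations of a fixed integer by an indefinite form (reduction theory). To pass from arbitrary classes of positive square to nef ones I would use the chamber structure of the positive cone: the walls $\delta\orth$ for $(-2)$-classes $\delta$ cut the positive cone into chambers on which the Weyl group $W_Y$ acts simply transitively, and the nef cone is the closure of the ample chamber. Thus every class of square $2d$ in the positive cone is $W_Y$-equivalent to a nef one, so nef classes of degree $2d$ biject with $W_Y$-orbits of positive classes of that degree. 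Finally, by the global Torelli theorem \cite{beauville-k3,bpvdv} the isometries of $NS_Y$ induced by $\Aut(Y)$ are precisely the Hodge isometries preserving the ample cone, and together with $W_Y$ they exhaust $\OO^+(NS_Y)$ up to a finite group of residual symmetries of the nef cone. Combined with the finiteness of $\OO(NS_Y)$-orbits above, this produces only finitely many $\Aut(Y)$-orbits of nef classes of square $2d$, and since $Y$ ranged over a finite set the proposition follows.

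The main obstacle is this second step, and within it the careful treatment of the boundary points of $\kf_{2d}$, i.e.\ the case where $h_Y$ is nef but not ample. There the period point lies on a wall $\delta\orth$ and one genuinely counts nef rather than ample classes, so the argument must work with the closed nef cone and the full chamber decomposition rather than merely with the open ample cone. The delicate input is the precise relationship, furnished by global Torelli, between $\Aut(Y)$, the Weyl group $W_Y$, and the finite residual symmetry group of the nef cone; making this bookkeeping explicit---equivalently, invoking the rational polyhedrality of a fundamental domain for $\Aut(Y)$ acting on $\mathrm{Nef}(Y)$---is what secures the finiteness of the orbit count in the semi-polarised case as well.
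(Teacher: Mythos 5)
Your proof is correct, but it takes a genuinely different route from the paper's in the decisive second step. The first step---finiteness of the unpolarised FM partners via \cite{bridgeland-maciocia}---is identical in both arguments. From there the paper argues projectively: if $c$ is ample then $3c$ is very ample by Saint-Donat \cite{saint-donat}, Bertini yields irreducible members of $|3c|$, and Sterk's finiteness of $\Aut(Y)$-orbits of classes of irreducible divisors of fixed self-intersection \cite{sterk}, combined with the injectivity of $c\mapsto|3c|$ (using $H^1(\ko_Y)=0$), bounds the count; the nef-but-not-ample case then requires a separate excursion, contracting the $(-2)$-curves orthogonal to $h_Y$ to a surface with ADE singularities and invoking Morrison's extension of Saint-Donat \cite{morrison-k3}. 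You argue instead entirely inside the hyperbolic lattice $NS_Y$: finiteness of the $\OO(NS_Y)$-orbits of vectors of square $2d$ (reduction theory), the fact that the closed fundamental chamber of the Weyl group $W_Y$ is a strict fundamental domain for its action on the positive cone, and the Torelli-based theorem that the image of $\Aut(Y)$ has finite index in the group of isometries of $NS_Y$ preserving the nef cone. Your route buys a uniform treatment of the boundary of $\kf_{2d}$---since each $W_Y$-orbit meets the closed chamber in exactly one point, semi-polarisations need no separate argument---whereas the paper's route buys independence from the structure theory of $\Aut(Y)$ inside $\OO(NS_Y)$, which is the deepest input you use and is itself a theorem of Sterk resting on strong Torelli; so both proofs lean on \cite{sterk}, but on different results from that paper. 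One imprecision to flag: the isometries of $NS_Y$ induced by $\Aut(Y)$ are not ``precisely the Hodge isometries preserving the ample cone''; the correct statement is that they are the ample-cone-preserving isometries of $NS_Y$ which extend, after gluing with some Hodge isometry of $T_Y$, to isometries of $H^2_Y$. Since $\OO_\hodge(T_Y)$ is finite and the stable isometries have finite index, this discrepancy only matters up to finite index---which is all your argument actually needs---but the claim as stated is not literally true.
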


\begin{proof} \ 
Disregarding polarisations, there are only finitely many FM partners of $X$, as $X$
is a smooth projective surface \cite{bridgeland-maciocia}. Given such an FM partner
$Y$, consider the set $A_{Y,2d}=\{c\in\kc(Y) \mid c^2=2d\}$ of elements of length $2d$
in the positive cone of $Y$. If the divisor $c$ is ample, then $3c$ is very ample,
by Saint-Donat's result \cite{saint-donat}. By Bertini, there are irreducible
divisors $D\in|3c|$.
The set  $B_{Y,18d}=\{\ko_X(D)\mid D^2=18d, D\text{ irreducible}\}$ of divisor
classes of irreducible divisors of length $18d$ is finite up to automorphisms by
Sterk \cite{sterk}.
As $A_{Y,2d}\to B_{Y,18d}$, $c\mapsto|3c|$ is injective (this uses $H^1(\ko_X)=0$),
this shows that the number of non-isomorphic $2d$-polarisations on $Y$ is finite.

However, there are points $(Y,h_Y)$ where $h_Y$ is only pseudo-ample. Denote the set
of pseudo-ample divisors of degree $2d$ by
 $\overline A_{Y,2d}=\{c\in\overline\kc(Y) \mid c^2=2d\}$. If we have a non-ample
polarisation $h_Y$, then contracting the finitely many $-2$-curves which intersect to
zero with $h_Y$ produces a projective surface $Y'$ with only ADE singularities,
trivial canonical bundle and $H^1(\ko_{Y'})=0$. Morrison shows that Saint-Donat's
result is also true for this surface \cite[\S6.1]{morrison-k3}, i.e.\ $3c$ is again
very ample. We can then proceed as above, as the generic divisor in $|3c|$ will be
irreducible and avoid the finitely many singularities. Sterk's result on finiteness
of $B_{Y',18d}/\Aut(Y')$ still applies as he simply assumes that a linear system is
given whose generic member is irreducible.
\qed \end{proof}


\section{Counting FM partners of polarised $\Kthree$ surfaces in lattice terms}\label{sec:countingformula}

\noindent
Taking our cue from the fact that the fibres of $\kf\to\kf/\FM$ are just given by
FM partners (the unpolarised case), and the latter can be counted in lattice terms,
we study the following general setup: let $L$ be an indefinite, even lattice, let
$T$ be another lattice, occurring as a sublattice of $L$, and let finally
$G_T\subseteq\OO(T)$ and $G_L\subseteq\OO(L)$ be two subgroups, the latter normal.
As in Section~\ref{sec:overlattices}, we consider the set $\kp(T,L)$ of all
primitive embeddings $\iota\colon T\embed L$. This set is partitioned into
$\kp(T,K,L)$, containing all primitive embeddings $\iota\colon T\embed L$ with
$\iota(T)\orth_L\cong K$.

In the application to geometry, we will have $L=L_{2d}=h^\perp$ the perpendicular
of the polarisation inside the $\Kthree$ lattice, $T=T_X$ the transcendental lattice of
a $\Kthree$ surface $X$ and $K=NS(X)$ the N\'eron-Severi lattice of $X$. By Nikulin's
criterion, $L_{2d}$ is unique in its genus, thus fulfilling Assumption~\ref{assumption}.
As to the groups, $G_T=\OO_\hodge(T)$ is the group of Hodge isometries of $T_X$ and
$G_L$ is either the full or the stable isometry group of $L_{2d}$.

We recall when two embeddings $\iota_1, \iota_2\colon T\embed L$ are equivalent
with respect to $G_T$ and $G_L$ (see page~\pageref{assumption}): if there are
isometries $g\in G_T$ and $\tilde g\in G_L$ such that
\[ \xymatrix{
T \ar^{\iota_1}[r] \ar[d]^g & L \ar[d]^{\tilde g} \\
T \ar^{\iota_2}[r]          & L
} \]
This corresponds to orbits of the action $G_T\times G_L\times\kp(T,L)\to\kp(T,L)$,
$(g,\tilde g)\cdot\iota=\tilde g\iota g\inv$.

All of this is essentially the setting of \cite{HLOY2002} --- the novelty is
the subgroup $G_L$, which always was the full orthogonal group in loc.\ cit.

\begin{proposition} \label{FM-fiber}
For a $2d$-polarised $\Kthree$ surface $(X,h_X)$, there are bijections
\begin{align*}
\sigma\inv([X,h_X]) &\bijection \OO_\hodge(T_X)\times\OO(H_X) \backslash \kp(T_X,H_X), \\
\tau\inv([X,h_X])   &\bijection \OO_\hodge(T_X)\times\tilde\OO(H_X) \backslash \kp(T_X,H_X).
\end{align*}
\end{proposition}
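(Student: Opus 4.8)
The plan is to reduce both fibres to lattice data via surjectivity of the period map and the Torelli theorems, and then to match that data with orbit spaces of primitive embeddings, exactly as in the unpolarised Hosono--Lian--Oguiso--Yau count reproved above. Fix once and for all a marking $\lambda_X\colon H^2_X\isom L_\kd$ with $\lambda_X(h_X)=h=e_3+df_3$. Restricting $\lambda_X$ identifies $H_X\isom L_{2d}$, carries $\omega_X$ to a period $\omega_0\in\Omega_{L_{2d}}^\pm$, and carries $T_X$ to the smallest primitive sublattice $T_{\omega_0}\subseteq L_{2d}$ whose complexification contains $\omega_0$; this yields a base primitive embedding $\iota_0\colon T_X\embed H_X$ with image $T_{\omega_0}$. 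Since $L_{2d}$ is unique in its genus by Nikulin's criterion, Assumption~\ref{assumption} holds and Lemma~\ref{lem:G-eq2} is available.

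First I would rewrite the fibres as period sets. By surjectivity of the period map together with the Torelli theorems for (semi-)polarised $\Kthree$ surfaces, the points of $\kf_{2d}$ (resp.\ $\hat\kf_{2d}$) are the $\tilde\OO(L_{2d})$-orbits (resp.\ $\OO(L_{2d})$-orbits) of periods $\omega'\in\Omega_{L_{2d}}^\pm$, identified with isomorphism classes of (semi-)polarised $\Kthree$ surfaces. Orlov's derived Torelli theorem then says that the fibre over $[X,h_X]\in\kd_\FM^{2d}$ consists of exactly those orbits for which the transcendental sublattice $T_{\omega'}\subseteq L_{2d}$ is Hodge-isometric to $T_X$. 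Thus the $\tau$-fibre is the set of such $\tilde\OO(L_{2d})$-orbits, while the fibre entering the $\sigma$-formula is the corresponding set of $\OO(L_{2d})$-orbits; the two cases of the proposition will differ only in this ambient group.

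The heart of the argument is a bijection with $\kp(T_X,H_X)$. To a period $\omega'$ in the fibre I associate the primitive embedding $\iota_{\omega'}\colon T_X\maps{\alpha} T_{\omega'}\embed L_{2d}=H_X$, where $\alpha$ is any Hodge isometry $T_X\isom T_{\omega'}$ (one exists by the very condition defining the fibre). Conversely, to $\iota\in\kp(T_X,H_X)$ I attach $\omega_\iota:=\iota_\IC(\omega_0)$; since $\iota$ is an isometry onto its image, $\omega_\iota.\omega_\iota=0$ and $\omega_\iota.\overline{\omega_\iota}>0$, so $[\omega_\iota]\in\Omega_{L_{2d}}^\pm$, and $\iota(T_X)$ is the smallest primitive sublattice through $\omega_\iota$, whence $T_{\omega_\iota}$ is Hodge-isometric to $T_X$ and $[\omega_\iota]$ lies in the fibre. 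I would then check that these two assignments are mutually inverse modulo the relevant groups: replacing $\alpha$ by another Hodge isometry alters $\iota_{\omega'}$ by right composition with $\OO_\hodge(T_X)$, and replacing $\omega'$ by $g\omega'$ inside its orbit alters $\iota_{\omega'}$ by left composition with $g$.

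It is this last point that carries the content, and the main obstacle is to verify that the admissible left compositions are exactly $\OO(H_X)$ for $\sigma$ and exactly $\tilde\OO(H_X)$ for $\tau$. I would settle it by matching the construction with Lemma~\ref{lem:G-eq2} (taking $G_T=\OO_\hodge(T_X)$ and $G_L$ equal to $\OO(L_{2d})$ resp.\ $\tilde\OO(L_{2d})$): the left ambiguity is precisely the freedom in choosing a period representative of an orbit-point, hence the ambient group $\OO(L_{2d})$ for $\hat\kf_{2d}$ and $\tilde\OO(L_{2d})$ for $\kf_{2d}$, the latter being the stabiliser of $h$ in $\OO(L_\kd)$ and thus genuinely stable. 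A secondary subtlety, also resolved by surjectivity of the period map, is that every embedding in $\kp(T_X,H_X)$ — including those whose orthogonal complement lies in a different genus from that of $NS_X\cap h\orth$ — is realised by an actual Fourier--Mukai partner. Finally I would remark that no wall-avoidance enters: although $(X,h_X)$ is ample, the fibres also contain semi-polarised partners, but since the whole construction lives on $\Omega_{L_{2d}}^\pm$ rather than on $\kf_{2d}^\circ$, these boundary points are handled uniformly, their uniqueness supplied by the Torelli theorem for semi-polarised $\Kthree$ surfaces.
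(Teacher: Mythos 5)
Your proposal is correct and takes essentially the same route as the paper: fix a marking, use the surjectivity of the period map together with the (semi-)polarised and derived Torelli theorems to identify the two fibres with $\tilde\OO(L_{2d})$- resp.\ $\OO(L_{2d})$-orbits of period points whose transcendental sublattice is Hodge-isometric to $T_X$, and then match these orbits equivariantly with classes of primitive embeddings in $\kp(T_X,H_X)$, using minimality of the transcendental lattice exactly as the paper does. Two cosmetic remarks: your $\omega_\iota$ should be $\iota_\IC(\omega_X)$ rather than $\iota_\IC(\omega_0)$, and the appeal to Lemma~\ref{lem:G-eq2} is unnecessary here --- the group-matching is immediate from the definitions of $\kf_{2d}$ and $\hat\kf_{2d}$ as quotients (which is what the rest of your own sentence says), and the paper reserves Lemma~\ref{lem:G-eq2} for the subsequent refinement in Proposition~\ref{prop:fibrecount}.
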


\begin{remark} \label{rem:unpolarized}
The unpolarised analogue of the proposition was given in Theorem 2.4 of
\cite{HLOY2002}, stating
 $\FM(X) = \OO_\hodge(T_X)\times\OO(H^2_X) \backslash \kp(T_X,H^2_X)$.
\end{remark}

\begin{proof} \ 
The proof proceeds along the lines of \cite[Theorem 2.4]{HLOY2002}.
Fix a marking $\lambda_X \colon H_X\isom L_{2d}$ for $X$. Set $T:=\lambda_X(T_X)$.
This yields a primitive embedding
\[ \iota_0 \colon \xymatrix{
      T\ar[r]^-\sim_-{\lambda_X\inv} & T_X \ar@{^{(}-}[r] &
      H_X \ar[r]^\sim_{\lambda_X} & L_{2d}
} . \]
This embedding (or rather the equivalence class of $\iota_0\lambda_X(\omega_X)$)
gives a point in $\kf_{2d}$. By definition of $\kf_{2d}$, this period point does
not depend on the choice of marking.

If $(Y,h_Y)$ belongs to (a period point given by an embedding in) $\kp(T,L_{2d})$,
then --- as the transcendental lattice is the smallest lattice containing the
canonical class in its complexification --- there is a Hodge isometry $T_X\cong T_Y$,
hence $D^b(X)\cong D^b(Y)$ and then $\FM(X)=\FM(Y)$. We therefore get maps
\begin{align*}
 \tilde c &\colon \kp(T,L_{2d})\to\tau\inv(X,h_X), \\
        c &\colon \kp(T,L_{2d})\maps{\tilde c}\tau\inv(X,h_X)\maps{\pi}\sigma\inv(X,h_X) .
\end{align*}
with the fiber $\tau\inv(X,h_X)$ consisting of FM partners of $(X,h_X)$ up to isomorphism.

The map $\tilde c$ is surjective (and hence $c$ is, as well): if
$(Y,h_Y)\in\kf_{2d}$ is an FM partner of $X$, then we first fix a marking
$\lambda_Y\colon H_Y\isom L_{2d}$. By the derived Torelli theorem, there is a Hodge
isometry $g\colon T_X\isom T_Y$. Using $g$ and the markings for $X$ and $Y$, we produce
an embedding
\[ \iota\colon \xymatrix{
   T_X \ar[r]^-\sim_-g & T_Y \ar@{^{(}-}[r] & H_Y \ar[r]^-\sim_-{\lambda_Y} & L_{2d} \ar[r]^-\sim_-{\lambda_X\inv} & H_X
} . \]
This gives a point $\iota\in\kp(T_X,H_X)$ and by construction,
$\tilde c(\iota)=(Y,h_Y)\in\tau\inv(X,h_X)$.

For brevity, we temporarily introduce shorthand notation
\begin{align*}
       \kp^\eq(T_X,H_X) &:= \OO_\hodge(T_X)\times\OO(H_X) \backslash \kp(T_X,H_X), \\
 \tilde\kp^\eq(T_X,H_X) &:= \OO_\hodge(T_X)\times\tilde\OO(H_X) \backslash \kp(T_X,H_X).
\end{align*}
and the goal is to show $\tilde\kp^\eq(T_X,H_X)=\tau^{-1}([X,h_X])$ and
$\kp^\eq(T_X,H_X)=\sigma^{-1}([X,h_X])$.

Now suppose that two embeddings $\iota,\iota'\colon T\embed L_{2d}$ give the same
equivalence class in $\tilde\kp^\eq(T,L_{2d})$. This means that there exist
isometries $g\in\OO(T)$ and  $\tilde g\in\tilde\OO(L_{2d})$ with
 $\iota'\circ g=\tilde g\circ\iota$. Denote the associated polarised $\Kthree$
surfaces by $(Y,h_Y)$ and $(Y',h_{Y'})$; choose markings $\lambda_Y$ and
$\lambda_{Y'}$ as above. Then we obtain a Hodge isometry
\[ \xymatrix{
H_Y \ar[r]^\sim_{\lambda_Y} & L_{2d} \ar[r]^\sim_{\tilde g} &
                           L_{2d} \ar[r]^\sim_{\lambda_{Y'}\inv} & H_{Y'} \\
T_Y \ar@{^{(}-}[u] \ar[r]^\sim & T \ar@{^{(}->}[u]_\iota \ar[r]^\sim_g &
                                 T \ar@{^{(}->}[u]_{\iota'} \ar[r]^\sim & T_{Y'} \ar@{^{(}-}[u]
} \]
and hence $(Y,h_Y)$ and $(Y',h_{Y'})$ define the same point in $\kf_{2d}$.
Thus the map $\tilde c$ factorises over equivalences classes and descends
to a surjective map  $\tilde c\colon \tilde\kp^\eq(T,L_{2d})\to\tau\inv(X,h_X)$.

Analogous reasoning applies if $\iota$ and $\iota'$ are equivalent in
$\kp^\eq(T,L_{2d})$: we get isometries $g\in\OO(T)$ and $\hat g\in\OO(L_{2d})$
with $\iota'\circ g=\hat g\circ\iota$ and use a diagram similar to the one
above. In this case, with the isometry $\hat g$ not necessarily stable,
we can only derive  that the period points coincide in $\hat\kf_{2d}$; hence
$c\colon \kp^\eq(T,L_{2d})\to\sigma\inv(X,h_X)$.

Finally, we show that these maps are injective, as well.
Let $[\iota], [\iota'] \in \tilde\kp^\eq(T,L_{2d})$ be two equivalence classes of
embeddings with $\tilde c([\iota])=\tilde c([\iota'])$. This implies the existence
of a stable isometry in $\OO(L_{2d})$ mapping $\omega\mapsto\omega'$ where $\omega$
and $\omega'$ are given by the construction of the map $\tilde c$ (they correspond
to semi-polarised $\Kthree$ surfaces $(Y,h_Y)$ and $(Y',h_{Y'})$). Using markings
$H_Y\isom L_{2d}$ and $H_{Y'}\isom L_{2d}$, we get an induced Hodge isometry
 $\varphi\colon H_Y\isom H_{Y'}$ with $\varphi(\omega_Y)=\omega_{Y'}$ and
$\varphi(h_Y)=h_{Y'}$. Once more invoking the minimality of transcendental lattices,
we also get a Hodge isometry $\varphi_T\colon T_Y\isom T_{Y'}$. These isometries
combine to
\[ \xymatrix{
L_{2d} \ar[r]^\sim_{\lambda_Y\inv}     & H_Y \ar[r]^\sim_{\varphi} &
H_{Y'} \ar[r]^\sim_{\lambda_{Y'}}  & L_{2d} \\
T \ar@{^{(}->}[u]^\iota \ar[r]^\sim & T_Y \ar@{^{(}-}[u] \ar[r]^\sim_{\varphi_T} &
T_{Y'} \ar@{^{(}-}[u] \ar[r]^\sim   & T \ar@{^{(}->}[u]^{\iota'}
} \]
the outer square of which demonstrates $[\iota]=[\iota']$.

For $[\iota], [\iota'] \in \kp^\eq(T,L_{2d})$ with $c([\iota])=c([\iota'])$,
we argue analogously, only now starting with an isometry of $L_{2d}$ mapping
$\omega\to\omega'$ which is not necessarily stable. Since periods of the $\Kthree$
surfaces get identified up to $\OO(L_{2d})$ in this case, the outer square
gives $[\iota]=[\iota']$ up to $\OO_\hodge(T)$ and $\OO(L_{2d})$.
\qed \end{proof}

\bigskip


From Lemma~\ref{lem:G-eq2} and Proposition \ref{FM-fiber}, we derive the
following statement.

\begin{proposition} \label{prop:fibrecount}
Given an $2d$-polarised $\Kthree$ surface $(X,h_X)$, there are bijections
\begin{align*}
\sigma\inv([X,h_X]) &\bijection \coprod_S \OO_\hodge(T_X)\times\OO(H_X) \backslash \kq(T_X,S,q_{2d}) \\
\tau\inv([X,h_X])   &\bijection \coprod_S \OO_\hodge(T_X)\times\tilde\OO(H_X) \backslash \kq(T_X,S,q_{2d}) .
\end{align*}
where the unions run over isomorphism classes of even lattices $S$ which admit
an overlattice $S\oplus T_X\embed H_X$ such that the induced embedding $S\subseteq H_X$
is primitive. The discriminant is $D_{q_{2d}}=\IZ/2d$ with $q_{2d}(1)=\frac{-1}{2d}$.
\end{proposition}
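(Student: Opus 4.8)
The plan is to transport the fibre descriptions of Proposition~\ref{FM-fiber} through the overlattice/isotropic-subgroup dictionary of Section~\ref{sec:overlattices}, with Lemma~\ref{lem:G-eq2} doing the decisive translation. Proposition~\ref{FM-fiber} already identifies $\sigma\inv([X,h_X])$ with $\OO_\hodge(T_X)\times\OO(H_X)\backslash\kp(T_X,H_X)$ and $\tau\inv([X,h_X])$ with $\OO_\hodge(T_X)\times\tilde\OO(H_X)\backslash\kp(T_X,H_X)$. It therefore remains to rewrite these quotients of the set of \emph{single} primitive embeddings $\kp(T_X,H_X)$ as the asserted disjoint unions of quotients of $\kq(T_X,S,q_{2d})$, carrying the Hodge-isometry factor $\OO_\hodge(T_X)$ along unchanged.

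First I would stratify $\kp(T_X,H_X)$ by the isometry class of the orthogonal complement. For $\iota\in\kp(T_X,H_X)$ the complement $S:=\iota(T_X)\orth_{H_X}$ is primitive of complementary rank, so $\kp(T_X,H_X)=\coprod_S\{\iota\mid\iota(T_X)\orth\cong S\}$, the union running over the isomorphism classes of even lattices $S$ occurring as such complements --- equivalently, as in the statement, those admitting an overlattice $T_X\oplus S\embed H_X$ with $S\subseteq H_X$ primitive. Choosing in addition an identification $j\colon S\isom\iota(T_X)\orth$ presents $\{\iota\mid\iota(T_X)\orth\cong S\}$ as $(\{\id\}\times\OO(S)\times\{\id\})\backslash\kp(T_X,S,H_X)$, two identifications differing precisely by an element of $\OO(S)$. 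Feeding this into the outer quotient yields, for each $S$, the orbit set $(\OO_\hodge(T_X)\times\OO(S)\times G_L)\backslash\kp(T_X,S,H_X)$ with $G_L=\OO(H_X)$ in the $\sigma$-case and $G_L=\tilde\OO(H_X)$ in the $\tau$-case; since the full $\OO(S)$ is allowed on the middle factor, this is exactly the relation \emph{equivalence up to $G_L$ and $G_T=\OO_\hodge(T_X)$} of Section~\ref{sec:overlattices}.

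Now I would invoke Lemma~\ref{lem:G-eq2}, which needs Assumption~\ref{assumption}, i.e.\ that $H_X$ be unique in its genus. This holds by Nikulin's criterion, as $H_X\cong L_{2d}=2U\oplus\langle-2d\rangle\oplus2E_8(-1)$ is indefinite of rank $21$ with $l(L_{2d})=1$, so $\rk\geq2+l$. Lemma~\ref{lem:G-eq2} then turns the orbit set above into $\kq(T_X,S,q_{2d})$ modulo the relation $H\sim H'$ whenever there is $\psi\in\OO_\hodge(T_X)\times\OO(S)$ with $D_\psi(H)=H'$ and $\psi^\vee|_{H_X}\in G_L$; this is the equivalence abbreviated by $\OO_\hodge(T_X)\times G_L\backslash\kq(T_X,S,q_{2d})$ in the statement. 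The only difference between the two fibres sits in $G_L$: for $\sigma$ the group $G_L=\OO(H_X)$ is full and the side condition $\psi^\vee|_{H_X}\in G_L$ is automatic, whereas for $\tau$ the condition $\psi^\vee|_{H_X}\in\tilde\OO(H_X)$ genuinely demands stability of the induced isometry of $H_X$, which is what can make the $\tau$-fibre finer. Finally $q_{2d}=q_{L_{2d}}$ has underlying group $\IZ/2d$ with $q_{2d}(1)=-1/2d$, as computed before the definition of $\Omega_{L_{2d}}^\pm$.

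The step I expect to cause the most trouble is the bookkeeping of the three commuting actions in the second paragraph: one must verify that the auxiliary $\OO(S)$ coming from the choice of identification of the orthogonal complement coincides \emph{on the nose} with the freedom on the $K$-side built into the equivalence of Lemma~\ref{lem:G-eq2}, and that the stratification by $S$ is preserved by all of $\OO_\hodge(T_X)$, $\OO(S)$ and $G_L$. Once this is secured, the remainder is a formal passage through the already-proven correspondence between overlattices and isotropic subgroups.
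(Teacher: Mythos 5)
Your proposal is correct and takes essentially the same route as the paper's own proof: both pass from Proposition~\ref{FM-fiber} to a partition of the fibres by the isomorphism class of the orthogonal complement $S$, and then apply Lemma~\ref{lem:G-eq2} (with Assumption~\ref{assumption} for $H_X\cong L_{2d}$ secured by Nikulin's criterion) to convert equivalence classes of embeddings into classes in $\kq(T_X,S,q_{2d})$. The only difference is that you make explicit the $\OO(S)$-torsor bookkeeping and the identification of the three-factor orbit relation with ``equivalence up to $G_L$ and $G_T$,'' which the paper leaves implicit.
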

\begin{proof} \ 
The fibres are obviously partitioned by the orthogonal complements that can
occur (this is in general a bigger choice than
just of an element in the genus).

Once a complement $S$ is chosen, then the set
$\kp^\eq(T_X,S,H_X)$, i.e.\ the set of embeddings of $T_X$ into $H_X$ with complement
isomorphic to $S$, up to Hodge isometries of $T_X$ and isometries of $H_X$,
coincides with $(\OO(H_X),\OO_\hodge(T_X))$-equivalence classes; this
follows from Lemma \ref{lem:G-eq2}. Ana\-lo\-gous reasoning addresses the
$\sigma$-fibres.
\qed \end{proof}


\begin{remark} \label{rem:unpolarised-counting-formula}
Following Remark~\ref{rem:unpolarized} and Lemma~\ref{lem:nikulin151}, we note that the formula for FM partners of
an unpolarised $\Kthree$ surface $X$ from \cite{HLOY2002} can be written as
\begin{align*}
\FM(X) &= \coprod_S \OO_\hodge(T_X)\times\OO(H^2_X) \backslash \kq(T_X,S,0)
\end{align*}
where $S$ now runs through isomorphism classes of lattices admitting an overlattice $S\oplus T_X\embed H^2_X$
such that $S$ is primitive in $H^2_X$. As $H^2_X$ is unimodular, the genus of $S$ is uniquely determined by that
of $T_X$ (see Remark~\ref{rem:unimodular-overlattice}). One candidate for $S$ is $NS(X)$ and we can describe 
$\FM(X)$ as the same union, with $S$ running through the genus $\kg(NS_X)$.
By Lemma~\ref{lem:morrison}, the sets $\kq(T_X,S,0)$ are all mutually bijective.

We will temporarily work with the sets $\kp(T_X,S,H^2_X)$ instead. On each such set, $\OO_\hodge(T_X)$ and
$\OO(H^2_X)$ act in the natural way. Hence, there are not just bijections $\kp(T_X,S,H^2_X)\bijection\kp(T_X,S',H^2_X)$
but also bijections of the quotients by $\OO_\hodge(T_X)\times\OO(H^2_X)$. We thus get
\begin{align*}
 \FM(X) &\bijection \kg(NS_X) \times \big( \OO_\hodge(T_X) \times \OO(H^2_X) \backslash \kp(T_X,NS_X,H^2_X) \big) \\
        &\bijection \kg(NS_X) \times \big( \OO_\hodge(T_X) \times \OO(H^2_X) \backslash \kq(T_X,NS_X,0) \big) .
\end{align*}
However, by Corollary~\ref{cor:unpolarised-case}, this implies
\begin{align*}
 \FM(X) &\bijection \kg(NS_X) \times \big( \OO_\hodge(T_X) \times \OO(NS_X) \backslash \OO(D_{NS_X}) \big) .
\end{align*}

%
Similar formulae in the polarised (hence non-unimodular) case are generally wrong.
\end{remark}




\section{Examples}\label{sec:examples}

\noindent
Proposition \ref{FM-fiber} phrases the problem of classifying polarised $\Kthree$
surfaces up to derived equivalence in lattice terms. Using the results of 
Section~\ref{sec:overlattices} this can be rephrased as Proposition~\ref{prop:fibrecount}
which clearly makes this a finite problem. Given $h_X\in L_{\kd}$ primitive and
$T_X\subseteq H_X=h_X\orth$, or equivalently, $T_X\subset L_{2d}$, one can (in
principle) list all potential subgroups $H$ of the discriminant group. This,
together with the fact that $\Hom(L_{2d},H)$ is finite, makes it possible to test
all potential overlattice groups.

\subsubsection*{Picard rank one}
We consider the special case of Picard rank 1. Here, $h_X\orth=T_X$. Also, any FM
partner of a $2d$-polarised K3 surface is again canonically $2d$-polarised (since the orthogonal
complement of the transcendental lattice is necessarily of the form $\langle -2d \rangle$). Oguiso
showed that the number of non-isomorphic FM partners is $2^{p(d)-1}$ (where $p(d)$
is the number of prime divisors) \cite{oguiso-primes}. This is also half of the
order of $\OO(D_{L_{2d}})$.

Stellari \cite[Theorem 2.2]{stellari-group} shows: the group $\OO(D_{L_{2d}})/\{\pm\id\}$
acts simply transitively on the fibre $\tau\inv(X,h_X)$. In particular, $\sigma$ is
one-to-one on these points.

We look at the situation from the point of view of Proposition~\ref{prop:fibrecount}. In this case 
$T_X=H_X$ and $\OO_\hodge(T_X)\times\OO(H_X) \backslash \kp(T_X,H_X)$ clearly contains only one element,
which says that the fibre of $\sigma$ contains only one element. The situation is different for 
$\tau$. For this we have to analyse the action of the quotient 
$\OO_\hodge(T_X)\times\OO(H_X) /\OO_\hodge(T_X)\times\tilde\OO(H_X) \cong O(D_{L_{2d}})$.  We note
that $- \id$ is contained in both $\OO_\hodge(T_X)$ and $\OO(H_X)$, and the element $(-\id, -\id)$ acts 
trivially on $\kp(T_X,H_X)$. On the other hand, since the Picard number of $X$ is $1$ it follows that every 
element in $\OO_\hodge(T_X)$ extends to an isometry of $H^2(X)$ which maps $h$ to $\pm h$.
Hence the group $O(D_{L_{2d}})/\langle\pm 1\rangle$ acts transitively and freely on the fibre of $\tau$
showing again that the number of FM-partners equals $2^{p(d)-1}$.

\subsubsection*{Large Picard rank}
For Picard ranks of 12 or more, derived equivalence implies isomorphism since any Hodge 
isometry of $T_X$ lifts to an isometry of $H^2_X$, using \cite[1.14.2]{nikulin}:
we have $\ell(NS_X)=\ell(T_X)\leq\rk(T_X)=22-\varrho(X)$ for the minimal number of
generators of $D_{NS_X}$; the lifting is possible if
$2+\ell(NS_X)\leq\rk(NS_X)=\varrho(X)$ --- hence $\ell\leq10$ and $\varrho\geq12$.
Still, there can be many non-isomorphic polarisations on the same surface. See below
for an example where the fibres of $\tau$ and $\sigma$ can become arbitrarily large
in the case of $\varrho=20$ maximal.

\subsubsection*{Positive definite transcendental lattice}
%

We consider the following candidates for transcendental lattices:
$T=\sqmat{2a}{0}{0}{2b}$ with $a>b>0$.
We denote the standard basis vectors for $T$ by $u$ and $v$, so that $u^2=2a$ and $v^2=2b$.
Note that the only isometries of this lattice are given by sending the basis vectors $u,v$ to $\pm u, \pm v$.
In the lattice $L_{2d}=2U\oplus2E_8(-1)\oplus\langle-2d\rangle$, denote by $l$ a generator
of the non-unimodular summand $\langle-2d\rangle$.

In this setting, we are looking for embeddings
 $\iota_1, \iota_2\colon T\embed L_{2d}$
such that $\iota_1(v)$ and $\iota_2(v)$ belong to different $\OO(L_{2d})$-orbits. This
would then immediately imply that the two embeddings cannot be equivalent. In order to 
show this, we appeal to Eichler's criterion.

Let us restrict to the special case $d=b=p^3$ for a prime $p$. 
Recall that the \emph{divisor} of a vector $w$ is the positive generator of the ideal $(w,L_{2d})$.
We want the divisor of the 
vector $v$ to be $p^2$. Setting, for $c\in\IZ$,
\[ v_c := p^2e_2 + p(1+c^2)f_2 + c l , \]
we have $v_c^2=2p^3$ and $\div(v_c)=(p^2,p(1+c^2),2cp^3)$. Choosing $c$ with $1+c^2\equiv0~(p)$,
which enforces $p\equiv1~(4)$, we get $\div(v_c)=p^2$. Now, by Eichler's criterion, the
$\tilde\OO(L_{2p^3})$-orbit of $v_c$ is determined by the length $v_c^2=2p^3$ and the class
$[v_c/\div(v_c)]=[c/p^2]\in D_{L_{2p^3}}$. The latter discriminant group is cyclic of order $2p^3$.
Hence, the number of orbits of vectors with length $2p^3$ and divisor $p^2$ equals the number of
solutions of $1+c^2\equiv0~(p)$ for $c=0,\ldots p^2-1$. The equation $1+c^2\equiv0$ has two
solutions in $\IZ/p$, as $p\equiv1~(4)$, hence $2p$ solutions in $\IZ/p^2$.
(The above computation is a very special case of the obvious adaption of \cite[Prop.\ 2.4]{ghs}
to the lattice $L_{2d}$.)

Together with $u:=e_1+af_1$, we get $2p$ lattices $T_c=\langle u,v_c\rangle$ embedded into
$L_{2p^3}$, and such that these embeddings are pairwise non-equivalent under the action of
$\tilde\OO(L_{2p^3})$. The discriminant group of $L_{2p^3}$ is $D_{L_{2p^3}}\cong\IZ/2p^3$,
hence $\OO(D_{L_{2p^3}})=\{\pm\id\}\cong\IZ/2$. We have to take the action of
 $\OO(L_{2p^3})/\tilde\OO(L_{2p^3})\cong\OO(D_{L_{2p^3}})$ on the set of
$\tilde\OO(L_{2p^3})$-orbits into account. As this is a 2-group, there must at least by $p$
orbits under the action of $\OO(L_{2p^3})$.
We remark that $\OO(D_{L_{2p^3}})$ is a 2-group in greater generality, see \cite[Prop.\ 2.5]{ghs}.

In particular, the number of pairwise non-equivalent embeddings is finite, but unbounded.

\subsubsection*{Unimodular $(T)\orth_{L_{2d}}$}
We use that there are precisely two inequivalent negative definite
unimodular even lattices of rank 16, namely $2E_8$ and $D_{16}^+$
(the latter is an extension of the non-unimodular root lattice $D_{16}$);
see \cite[\S16.4]{conway-sloane}. They become equivalent after adding a
hyperbolic plane: $2E_8\oplus U \cong D_{16}^+\oplus U$,
since unimodular indefinite lattice are determined by rank and signature.
Setting $T:=2U\oplus\langle-2d\rangle$, we get
\[ 2E_8(-1)\oplus T \cong L_{2d} \cong D_{16}^+(-1)\oplus T .\]
Hence, since the orthogonal complements are different, there must at least be
two different embeddings of $T$ into $L_{2d}$. This example allows for
arbitrary polarisations (in contrast to the previous one).

\end{document}